\newcommand{\refitem}[1] {\textit{\ref{#1}.)}}
\numberwithin{equation}{section}
\newcommand{\NN}{\mathbb{N}}
\newcommand{\RR}{\mathbb{R}}
\newcommand{\CC}{\mathbb{C}}
\newcommand{\Unit}{\mathbbm 1}
\newcommand{\I}{\mathrm{i}}
\newcommand{\cc}[1]{\overline{#1}}
\newcommand{\argument}{\,\cdot\,}
\newcommand{\hermitian}{\mathrm h}
\newcommand{\lie}[1]{\mathfrak{#1}}
\newcommand{\ring}[1]{\mathcal{#1}}
\newcommand{\variety}[1][]{\mathcal{X}_{#1}}
\newcommand{\A}{\ring{A}}
\newcommand{\B}{\ring{B}}
\newcommand{\subKnachKlammer}{{\hspace{-1pt}K}}
\DeclareMathOperator{\RE}{Re}
\DeclareMathOperator{\IM}{Im}
\newcommand{\Schwartz}{\mathscr{S}}
\newcommand{\Weyl}[1]{\mathcal W_{#1}}
\newcommand{\Vanish}[2][]{\mathcal V#1(#2#1)}
\newcommand{\ZerosNull}[2][]{\mathcal Z_0#1(#2#1)}
\newcommand{\Zeros}[2][]{\mathcal Z#1(#2#1)}
\newcommand{\Tensor}{\mathcal T}
\newcommand{\quotient}{\mathbin{/}}
\DeclareMathOperator{\pfaffian}{pf}
\DeclareMathOperator{\Pfaffian}{Pf}
\DeclareMathOperator{\ad}{ad}
\newcommand{\inv}{\mathrm{inv}}
\newcommand{\Dom}{\mathcal{D}}
\newcommand{\Adbar}{\mathcal{L}^*}
\newcommand{\UnivStar}{\mathcal{U}^*\hspace{-1pt}}
\DeclareMathOperator{\sign}{sign}
\newcommand{\Invariant}{E}
\newcommand{\Dual}{D}
\newcommand{\FinSet}{\mathcal{F}}
\newcommand{\FinSetMap}[2][]{\mathop{\FinSetMapMap}#1(#2#1)}
\newcommand{\FinSetMapMap}{\varepsilon}
\newcommand{\charStar}{\mathop{\mathrm{char}^*}}
\newcommand{\SymmetricGroup}{\mathfrak{S}}
\newcommand{\fthree}{\lie f_3}
\DeclarePairedDelimiter{\ordinarySet}{\{}{\}}
\DeclarePairedDelimiter{\ordinaryIP}{\langle}{\rangle}
\DeclarePairedDelimiter{\abs}{|}{|}
\DeclarePairedDelimiter{\norm}{\|}{\|}
\newcommand{\set}[3][]{\ordinarySet[#1]{\,#2 \;#1|\; #3\,}}
\newcommand{\skal}[3][]{\ordinaryIP[#1]{\,#2 \,#1|\, #3\,}}
\newcommand{\kom}[3][]{\mathop{#1[}#2\mathbin{,}#3\mathop{#1]}}
\newcommand{\genSAlg}[2][]{#1\langle\!#1\langle\,#2\,#1\rangle\!#1\rangle_{*\textup{-}\mathrm{alg}}}
\newcommand{\genSId}[2][]{#1\langle\!#1\langle\,#2\,#1\rangle\!#1\rangle_{*\textup{-}\mathrm{id}}}
\newcommand{\genVS}[2][]{#1\langle\!#1\langle\,#2\,#1\rangle\!#1\rangle_{\mathrm{lin}}}
\newcommand{\Kronecker}{\delta}
\newcommand{\mailto}[1]{\href{mailto:#1}{\texttt{#1}}}
\newtheorem{lemma}{Lemma}[section]
\newtheorem{proposition}[lemma]{Proposition}
\newtheorem{theorem}[lemma]{Theorem}
\newtheorem{corollary}[lemma]{Corollary}
\newtheorem{definition}[lemma]{Definition}
\newtheorem{example}[lemma]{Example}
\def\thmhead@plain#1#2#3{%
	\thmname{#1}\thmnumber{\@ifnotempty{#1}{ }\@upn{#2}}%
	\thmnote{ {\the\thm@notefont#3}}}
\let\thmhead\thmhead@plain
\theoremstyle{nonumberplain}
\newtheorem{proof}{Proof}
\newtheorem{completionofproof}{Completion of Proof of Theorem~\ref{theorem}}
\newtheorem{startofproof}{Start of Proof}
\author{
    \textbf{Philipp Schmitt}\\
    Institut für Analysis\\
    Leibniz Universität Hannover\\
    Am Welfengarten 1,
    30167 Hannover\\
    Germany\\
    \mailto{schmitt@math.uni-hannover.de}
    \and
    \textbf{Matthias Schötz}\footnote{Current address: 
      Fakulteta za matematiko in fiziko, Jadranska ulica 19, 1000 Ljubljana, Slovenia.
    }\\
    Instytut Matematyczny\\
    Polskiej Akademii Nauk\\
    ul.~\'Sniadeckich 8, 00-656 Warszawa\\
    Poland\\
    \mailto{schotz@impan.pl}
}
\title{Real Nullstellensatz for $2$-step nilpotent Lie algebras}
\date{October 2024}
\begin{document}
	\maketitle
	
	\begin{abstract}
		We prove a noncommutative real Nullstellensatz for $2$-step nilpotent Lie algebras that extends the classical,
		commutative real Nullstellensatz as follows: Instead of the real polynomial algebra $\RR[x_1,\dots,x_d]$ we consider the
		universal enveloping $^*$\=/algebra of a $2$-step nilpotent real Lie algebra (i.e.~the universal enveloping algebra of
		its complexification with the canonical $^*$\=/involution). Evaluation at points of $\RR^d$ is then generalized to
		evaluation through integrable $^*$\=/representations, which in this case are equivalent to filtered
		$^*$\=/algebra morphisms from the universal enveloping $^*$\=/algebra to a Weyl algebra. Our Nullstellensatz
		characterizes the common kernels of a set of such $^*$\=/algebra morphisms as the real ideals of the universal
		enveloping $^*$\=/algebra.
	\end{abstract}


\begin{onehalfspace}
	\tableofcontents

\section{Introduction}

The real Nullstellensatz \cite{risler:nullstellensatz} states that for any ideal $I$ of the real polynomial algebra
$\RR[x_1,\dots,x_d]$, $d\in \NN_0$, the following are equivalent:
\begin{enumerate}
  \item $I$ is real, i.e.~whenever $p_1, \dots, p_k \in \RR[x_1,\dots,x_d]$, $k\in \NN$, fulfil $\sum_{j=1}^k p_j^2 \in I$, then $p_1, \dots, p_k \in I$.
  \item $I$ is the vanishing ideal of its set of zeros $\mathcal{Z}_0(I) \coloneqq \set[\big]{\xi \in \RR^d}{p(\xi_1, \dots, \xi_d) = 0 \textup{ for all }p\in I}$,
    i.e.~$p\in \RR[x_1,\dots,x_d]$ fulfils $p(\xi_1, \dots,\xi_d) = 0$ for all $\xi \in \mathcal{Z}_0(I)$ if and only if $p\in I$.
\end{enumerate}
This Nullstellensatz, together with the Positivstellensatz of Krivine and Stengle \cite{krivine:AnneauxPreordonnes, stengle:aNullstellensatzAndAPositivstellensatzInSemialgebraicGeometry} (characterizing the convex cones of polynomials positive
on a set defined by polynomial inequalities), is one of the cornerstones of classical real algebraic geometry.

Since then it has become more and more clear that real algebraic geometry has close ties to topics in noncommutative algebra,
like operator theory and the representation theory of $^*$\=/algebras. For example, various Positivstellensätze for the polynomial
algebra have close analogs in the noncommutative world: roughly speaking, the real polynomial algebra as the object
of study is replaced by $^*$\=/algebras (i.e.\ complex algebras endowed with an antilinear involution $\argument^*$ that reverses the order of multiplication),
and evaluation at points of $\RR^d$ is replaced by (irreducible) $^*$\=/representations. It turns out that this way some Positivstellensätze
can be transferred to e.g.~matrices over commutative rings 
\cite{cimpric:realAlgebraicGeometryForMatricesOverCommutativeRings, gondard.ribenboim:le17eProblemeDeHilbertPourLesMatrices, 
hillar.nie:elementaryAndConstructiveSolutionToHilberts17thProblemForMatrices,
hol.scherer:matrixSumOfSquaresRelaxationsForRobustSemiDefinitePrograms, klep.schwaighfer:pureStatesPositiveMatrixPolynomialsAndSumsOfHermitianSquares},
freely generated $^*$\=/algebras \cite{helton:positiveNoncommutativePolynomialsAreSumsOfSquares, helton.klep.mcCullough.volcic:noncommutativePolynomialsDescribingConvexSets, helton.mcCullough:positivstellensatzForNonCommutativePolynomials},
the Weyl algebra \cite{schmuedgen:StrictPositivstellensatzForWeylAlgebra},
or the universal enveloping $^*$\=/algebra of real Lie algebras \cite{schmuedgen:StrictPositivstellensatzForEnvelopingAlgebras};
see also \cite{schmuedgen:nonCommutativeRealAlgebraicGeometry} for an overview.

With respect to noncommutative real Nullstellensätze for a $^*$\=/algebra $\A$ one has to distinguish between different versions:
One can develop ``one-sided'' Nullstellensätze characterizing the left ideal of elements $a\in \A$ that fulfil
$\pi(a) v = 0$ for certain $^*$\=/representations $\pi$ of $\A$ and vectors $v$ of the representation space of $\pi$,
and ``two-sided'' Nullstellensätze characterizing the ideal of elements $a\in \A$ such that $\pi(a) = 0$ for some $^*$\=/representations $\pi$.
At least heuristically, Nullstellensätze of the first type are stronger than those of the second type.
In any case, noncommutative Nullstellensätze are still rare, especially those that are not subject to additional restrictions:

For matrices over the real polynomial algebra there are one- and two-sided Nullstellensätze
\cite{cimpric:realAlgebraicGeometryForMatricesOverCommutativeRings, cimpric:realNullstellensatzForFreeModules},
and for quaternionic polynomials there is the real Nullstellensatz \cite{alon.paran:quaternionicNullstellensatz} and its
(one-sided) matrix version \cite{cimpric:matrixVersionsOfRealAndQuaternionicNullstellensaetze}.
Then there is the real Nullstellensatz for finite dimensional $^*$\=/representations \cite{procesi.schacher:nonCommutativeRealNullstellensatzAndHilberts17thProblem},
and a number of results for the free $^*$-algebra in variables $x_1, \dots,x_d,x_1^*, \dots,x_d^*$, $d\in \NN$,
which, however, are also subject to additional restrictions:
\cite{cimpric.helton.klep.mcCullough.nelson:onRealOneSidedIdealsInAFreeAlgebra, 
cimpric.helton.mcCullough.nelson:noncommutativeRealNullstellensatzCorrespondsToANoncommutativeRealIdealAlgorithms,
helton.mcCullough.Putinar:strongMajorizationInAFreeStarAlgebra}
provide a ``one-sided'' (real) Nullstellensatz for finitely-generated left ideals of the free ($^*$-)algebra,
and \cite{cimpric.helton.mccullogh.helton:realNullstellensatzAndStarIdeals} gives a ``two-sided'' real Nullstellensatz for $^*$\=/ideals
with finite codimension and for $^*$\=/ideals generated by homogeneous polynomials in only the unstared variables $x_1, \dots,x_d$. In all these cases it is sufficient
to consider only finite-dimensional $^*$\=/representations. Clearly, a real Nullstellensatz for the free $^*$-algebra
without restrictions would be of great value, but \cite[Sec.~3]{cimpric.helton.mccullogh.helton:realNullstellensatzAndStarIdeals}, \cite{popovych:onOStarRepresentabilityAndCStarRepresentabilityOfStarAlgebras}
also give an example demonstrating that there is no such general real Nullstellensatz,
even when taking into account arbitrary $^*$\=/representations on possibly infinite dimensional pre-Hilbert spaces. We therefore focus on
the smaller class of universal enveloping $^*$\=/algebras of real Lie algebras, which still have a very diverse representation theory.
In the following we present a first result in this direction, a ``two-sided'' real Nullstellensatz for $2$-step nilpotent Lie algebras,
Theorem~\ref{theorem}:

An ideal of the universal enveloping $^*$\=/algebra $\UnivStar(\lie g)$ of a $2$-step nilpotent real Lie algebra $\lie g$
is real (in an adequate sense for $^*$\=/algebras) if and only if it is the intersection of the kernel of Schrödinger 
representations of $\UnivStar(\lie g)$.

While $2$-step nilpotent real Lie algebras are rather trivial from the point of view of Lie theory, this Nullstellensatz
has several interesting features: First of all, it is easy to see that such a Nullstellensatz must take into account
infinite-dimensional, unbounded $^*$\=/representations, see Example~\ref{example:heisenberg} below. However, we do not 
allow arbitrary $^*$\=/representations, only sufficiently well-behaved ones, namely those that are ``integrable'' in the
sense of \cite[Sec.~9]{schmuedgen:invitationToStarAlgebras}, which in this setting are just Schrödinger representations.
Yet we have to consider Schrödinger representations with a varying number of position and momentum operators.
By Kirillov theory \cite{kirillov:UnitaryReps}, the representation space of a nilpotent real Lie algebra $\lie g$ is given by its coadjoint orbits, which
are the strata of a stratification of the dual vector space $\lie g^*$, see e.g.\ \cite{corwin.greenleaf:representationTheoryOfNilpotentLieGroupsAndTheirApplications}.
This non-trivial geometry is already present in
the $2$-step nilpotent case. The proof of Theorem~\ref{theorem} thus is a preparation for the general
real Nullstellensatz for arbitrary nilpotent Lie algebras that we hope to present in a follow-up. In contrast to the
general case, the $2$-step nilpotent case can be solved entirely with algebraic tools and without deeper knowledge of Kirillov theory.

In the next Section~\ref{sec:preliminaries} we fix our notation, and then in Section~\ref{sec:theorem} we give the precise 
statement of Theorem~\ref{theorem} and develop its proof. In the final Section~\ref{sec:examples} we discuss $2$ instructive
examples: the Heisenberg Lie algebra and the $2$-step nilpotent Lie algebra $\fthree$ that is freely generated by $3$ elements.
The example of the Heisenberg Lie algebra summarizes the main ideas of the proof of Theorem~\ref{theorem},
the example of $\fthree$ demonstrates some of the pathologies that arise in the general case.

\section{Notation and preliminaries}\label{sec:preliminaries}

\subsection{\texorpdfstring{$^*$-Algebras}{*-Algebras} and their ideals}

A \emph{$^*$\=/algebra} $\A$ is a complex associative algebra with unit $\Unit$ and endowed with an antilinear involution $\argument^* \colon \A \to \A$
such that $(ab)^* = b^* a^*$ for all $a,b\in \A$. For example, if $\Dom$ is a pre-Hilbert space, then $\Adbar(\Dom)$,
the algebra of adjointable endomorphisms of $\Dom$ with $\argument^* \colon \Adbar(\Dom) \to \Adbar(\Dom)$ assigning
to every $a\in \Adbar(\Dom)$ its adjoint endomorphism $a^* \in \Adbar(\Dom)$, is a $^*$\=/algebra.
An element $a\in \A$ that fulfils $a^* = a$ is called \emph{hermitian}, and $a\in \A$ is called \emph{antihermitian} 
if $a^* = -a$. On every $^*$\=/algebra $\A$ we define the \emph{commutator bracket} $\kom{\argument}{\argument} \colon \A \times \A \to \A$,
\begin{equation}
  (a,b)\mapsto {\kom{a}{b}} \coloneqq ab-ba
  .
\end{equation}
Note that the antihermitian elements of $\A$ are closed under this commutator bracket, in contrast to the hermitian elements.
A \emph{$^*$\=/algebra morphism} $\Phi \colon \A \to \B$ between two $^*$\=/algebras
$\A$ and $\B$ is a unital algebra morphism such that $\Phi(a^*) = \Phi(a)^*$ for all $a\in \A$.
In this case $\A$ is called the \emph{domain} of $\Phi$ and $\B$ its \emph{codomain}.
In particular, a \emph{$^*$\=/character} on a $^*$\=/algebra $\A$ is a $^*$\=/algebra morphism $\varphi \colon \A \to \CC$
and we write $\charStar(\A)$ for the set of all $^*$\=/characters on $\A$. 

A $^*$\=/subalgebra of a $^*$\=/algebra $\A$ is a subset $\B$ of $\A$ with $\Unit \in \B$ that is closed under addition and multiplication
and stable under the $^*$\=/involution (thus $\B$ is again a $^*$\=/algebra). For any subset $S$ of $\A$, the $^*$\=/subalgebra
\emph{generated by $S$} is the smallest (with respect to inclusion $\subseteq$) $^*$\=/subalgebra of $\A$ that contains $S$,
and it is denoted by $\genSAlg{S}$.

For $^*$\=/algebras, the notion of a real ideal has to be adapted. While one can find different notions of real one-sided
ideals in the literature, for the two-sided case they all coincide.
\begin{definition} \label{definition:realIdeal}
  Let $I$ be an ideal of a $^*$\=/algebra $\A$. Then we say that $I$ is \emph{real} if the following holds:
  whenever $a_1, \dots, a_k \in \A$, $k\in \NN$, fulfil $\sum_{j=1}^k a_j^* a_j \in I$, then $a_1, \dots, a_k \in I$.
\end{definition}
For noncommutative $^*$\=/algebras it is a non-trivial task to give an explicit description of the smallest real ideal
that contains a given ideal, see \cite{cimpric.helton.mcCullough.nelson:noncommutativeRealNullstellensatzCorrespondsToANoncommutativeRealIdealAlgorithms}
for the one-sided case.

An ideal $I$ of a $^*$\=/algebra $\A$ that is stable under the $^*$\=/involution, i.e.~$a^* \in I$ for all $a\in I$, is called a \emph{$^*$\=/ideal}.
Every real ideal $I$ of a $^*$\=/algebra $\A$ is a $^*$\=/ideal: indeed, given $a\in I$, then $(a^*)^* (a^*) = a a^* \in I$ and therefore $a^* \in I$. 
Moreover, the kernel $\ker \Phi = \set{a\in \A}{\Phi(a) = 0}$ of every $^*$\=/algebra morphism $\Phi \colon \A \to \B$
into any $^*$\=/algebra $\B$ is a $^*$\=/ideal of $\A$. Yet if $\B = \Adbar(\Dom)$
for some pre-Hilbert space $\Dom$, then
$\ker \Phi$ is even a real ideal: indeed, given $a_1,\dots,a_k \in \A$, $k\in \NN$ such that $\Phi\bigl( \sum_{j=1}^k a_j^* a_j \bigr) = 0$,
then $\sum_{j=1}^k \skal{\Phi(a_j)(\psi)}{\Phi(a_j)(\psi)} = 0$ for all $\psi \in \Dom$ and therefore $\norm{\Phi(a_j)(\psi)} = 0$
for all $\psi \in \Dom$ and $j \in \{1,\dots,k\}$, i.e.~$\Phi(a_1) = \dots = \Phi(a_k) = 0$.

In order to formulate a real Nullstellensatz for $^*$\=/algebras we also have to adapt the notion of a vanishing ideal.
The usual approach is to replace evaluation of polynomials $p\in \RR[x_1,\dots,x_d]$ at points $\xi \in \RR^d$ by
application of a (suitably well-behaved, possibly irreducible) $^*$\=/representation. We will essentially follow
this approach, but for the time being we define:
\begin{definition} \label{definition:vanishingIdeal}
  Let $\A$ be a $^*$\=/algebra and $S$ a set of $^*$\=/algebra morphisms with domain $\A$. Then the \emph{vanishing ideal} of $S$ is
  $\Vanish{S} \coloneqq \bigcap_{\Phi \in S} \ker \Phi$. For the empty set this is understood as $\Vanish{\emptyset} \coloneqq \A$.
\end{definition}
We can use $^*$\=/characters in order to reformulate the classical, commutative real Nullstellensatz:

\begin{lemma} \label{lemma:commutativeNullstellensatz}
  Let $\A$ be a $^*$\=/algebra that is generated by finitely many elements and let $I$ be a real ideal of $\A$. 
  Write $\ZerosNull{I} \coloneqq \set[\big]{\varphi \in \charStar(\A)}{ I \subseteq \ker \varphi }$.
  If $\kom{a}{b} \in I$ for all $a,b\in\A$, then $I= \Vanish{\ZerosNull{I}}$.
\end{lemma}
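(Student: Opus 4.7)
The plan is to reduce to the classical real Nullstellensatz by passing to the quotient $\A/I$ and extracting a commutative real algebra. Since $[a,b] \in I$ for all $a,b\in\A$, the quotient $\bar\A \coloneqq \A/I$ is commutative, and since $I$ is real, it is in particular a $^*$-ideal, so $\bar\A$ inherits a $^*$-algebra structure via the quotient map $\pi \colon \A \to \bar\A$. Being commutative, $\bar\A$ splits as $\bar\A = \bar\A_\hermitian \oplus \I \bar\A_\hermitian$, where $\bar\A_\hermitian$ denotes the $\RR$-subalgebra of hermitian elements.

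First I would choose finitely many hermitian generators $x_1,\dots,x_d \in \A$ of $\A$ as a $^*$-algebra (replacing each original generator by its hermitian and antihermitian parts). Then $y_i \coloneqq \pi(x_i)$ generate $\bar\A_\hermitian$ as an $\RR$-algebra, giving a surjective $\RR$-algebra morphism $\phi \colon \RR[t_1, \dots, t_d] \to \bar\A_\hermitian$ with $t_i \mapsto y_i$. Let $J \coloneqq \ker \phi$.

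The key step is to show that $J$ is a real ideal of $\RR[t_1,\dots,t_d]$. If $\sum_{j=1}^k p_j^2 \in J$, pick any lifts $\hat p_j \in \A$ of $\phi(p_j) \in \bar\A$. Since $\phi(p_j)$ is hermitian and commutes with itself, $\pi(\hat p_j^* \hat p_j) = \phi(p_j)^* \phi(p_j) = \phi(p_j)^2 = \phi(p_j^2)$, hence $\pi\bigl(\sum_j \hat p_j^* \hat p_j\bigr) = \phi\bigl(\sum_j p_j^2\bigr) = 0$, so $\sum_j \hat p_j^* \hat p_j \in I$. Because $I$ is real, each $\hat p_j \in I$, so $\phi(p_j) = 0$, i.e.\ $p_j \in J$. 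This is the main obstacle and it is overcome precisely by the freedom to pick \emph{any} lift together with the trick of using $\hat p_j^* \hat p_j$ rather than $\hat p_j^2$.

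Next I apply the classical real Nullstellensatz to $J$: $p \in J$ if and only if $p$ vanishes on $\mathcal Z_0(J) \subseteq \RR^d$. A point $\xi \in \mathcal Z_0(J)$ induces an $\RR$-algebra morphism $\bar\A_\hermitian \to \RR$ sending $y_i \mapsto \xi_i$, and extending $\CC$-linearly and composing with $\pi$ yields a $^*$-character $\varphi_\xi \in \charStar(\A)$ that vanishes on $I$, hence $\varphi_\xi \in \ZerosNull{I}$; conversely every $\varphi \in \ZerosNull{I}$ restricts to an $\RR$-morphism of $\bar\A_\hermitian$ and therefore comes from such a $\xi$.

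To conclude $I \supseteq \Vanish{\ZerosNull{I}}$ (the other inclusion being immediate), take $a \in \Vanish{\ZerosNull{I}}$ and decompose $a = a_\hermitian + \I a_{\hermitian}'$ with $a_\hermitian, a_\hermitian' \in \A$ hermitian. For each $\varphi \in \ZerosNull{I}$ the values $\varphi(a_\hermitian), \varphi(a_\hermitian')$ are real, so the equation $\varphi(a) = 0$ forces both to vanish. Applying the correspondence above, the polynomials representing $\pi(a_\hermitian)$ and $\pi(a_\hermitian')$ vanish on all of $\mathcal Z_0(J)$, hence lie in $J$ by the classical Nullstellensatz. Therefore $\pi(a_\hermitian) = \pi(a_\hermitian') = 0$, so $a_\hermitian, a_\hermitian' \in I$ and thus $a \in I$.
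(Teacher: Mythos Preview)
Your proof is correct and follows essentially the same approach as the paper: pass to the commutative quotient $\A/I$, present its hermitian part as a quotient of a real polynomial ring, verify that the kernel is real using lifts $\hat p_j$ and the identity $\pi(\hat p_j^*\hat p_j)=\phi(p_j)^2$, apply the classical real Nullstellensatz, and then translate back via the correspondence between points of $\mathcal Z_0(J)$ and $^*$\=/characters in $\ZerosNull{I}$. The only cosmetic difference is that you also note the converse bijection between $\ZerosNull{I}$ and $\mathcal Z_0(J)$, which the paper omits since only one direction is needed.
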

\begin{proof}
  It is immediately clear that $I \subseteq \Vanish{\ZerosNull{I}}$, but for the converse inclusion we have to apply the commutative real Nullstellensatz:

  By passing to real and imaginary parts $\RE(a) \coloneqq (a + a^*)/2$ and $\IM(a) \coloneqq -\I (a - a^*)/2$ we can assume that $\A$ is generated by
  finitely many hermitian elements $g_1, \dots, g_{d}$ of $\A$ that fulfil
  $\kom{g_j}{b} \in I$ for all $j\in \{1,\dots,d\}$ and all $b\in \A$. As $I$ is a real ideal of $\A$ it is a $^*$\=/ideal
  so that we can construct the quotient $^*$\=/algebra $\A \quotient I$, which is generated by central hermitian elements
  $[g_1], \dots, [g_{d}]$, where $[\argument] \colon \A \to \A \quotient I$ is the canonical projection onto the quotient.
  Therefore $\A \quotient I$ is commutative, so its hermitian elements form a finitely generated commutative real algebra
  that we denote by $(\A \quotient I)_\hermitian$, and the canonical morphism of $\RR$-algebras
  $\check \argument \colon \RR[x_1,\dots,x_{d}] \to (\A \quotient I)_\hermitian$,
  \begin{equation*}
    p \mapsto \check p \coloneqq p\bigl([g_1], \dots, [g_{d}]\bigr)
  \end{equation*}
  is well-defined and surjective.
  
  We check that its kernel $\ker \check \argument$ is a real ideal of $\RR[x_1,\dots,x_{d}]$ as in the commutative real Nullstellensatz
  \cite{risler:nullstellensatz} cited in the introduction: If $p_1, \dots, p_k \in \RR[x_1,\dots,x_{d}]$ fulfil
  $\sum_{j=1}^k p_j^2 \in \ker \check \argument$, then $\sum_{j=1}^k \check p_j^* \check p_j = 0$ and so there
  exist representatives $a_1, \dots, a_k \in \A$ fulfilling $[a_j] = \check p_j$ for all $j\in \{1,\dots,k\}$.
  Then $\sum_{j=1}^k [a_j]^* [a_j] = 0$, i.e.~$\sum_{j=1}^k a_j^* a_j \in I$. As the ideal $I$ of $\A$ is real
  in the sense of Definition~\ref{definition:realIdeal}, this implies $a_1, \dots, a_k \in I$, so $\check p_j = [a_j] = 0$ for all $j\in \{1,\dots,k\}$,
  i.e.~$p_1, \dots, p_k \in \ker \check\argument$. By the commutative real Nullstellensatz \cite{risler:nullstellensatz},
  $\ker \check\argument$ is the vanishing ideal of its set of zeros 
  $\mathcal{Z}'_0(\ker \check\argument) \coloneqq \set[\big]{\xi \in \RR^{d}}{p(\xi_1, \dots, \xi_{d}) = 0 \textup{ for all }p\in \ker\check\argument}$.
  For every $\xi \in \mathcal{Z}'_0(\ker \check\argument)$ the corresponding evaluation functional $\delta_\xi \colon \RR[x_1,\dots,x_{d}] \to \RR$,
  $p \mapsto \delta_\xi(p) \coloneqq p(\xi_1,\dots,\xi_{d})$ descends to a well-defined morphism of $\RR$-algebras
  $\check \delta_\xi \colon (\A \quotient I)_\hermitian \to \RR$, $\check p \mapsto \check\delta_\xi(\check p) \coloneqq \delta_\xi(p)$,
  which can be extended to a $^*$\=/algebra morphism from $\A \quotient I$ to $\CC$, i.e.~a $^*$\=/character on $\A \quotient I$, which in turn
  can be pulled back to a $^*$\=/character $\varphi_\xi \colon \A \to \CC$. By construction $\varphi_\xi \in \ZerosNull{I}$ and 
  $\varphi_\xi(a) = p(\xi)$ holds for all $a\in \A$ with $a=a^*$ and all $p \in \RR[x_1,\dots,x_{d}]$ with $\check p = [a]$.
  
  Now consider a hermitian element $a$ of $\Vanish{\ZerosNull{I}}$, so $\varphi(a) = 0$ for all $\varphi \in \ZerosNull{I}$. Then there exists
  $p\in \RR[x_1,\dots,x_{d}]$ such that $\check p = [a]$, and $p(\xi) = \varphi_\xi(a) = 0$ for all $\xi \in \mathcal{Z}'_0(\ker \check\argument)$.
  Therefore $\check p = 0$ by the commutative real Nullstellensatz and consequently $a\in I$. A general element $a\in \Vanish{\ZerosNull{I}}$
  can be decomposed as a linear combination of its hermitian real and imaginary part, $a = \RE(a) + \I \IM(a)$,
  that fulfil $\varphi(\RE(a)) = \varphi(\IM(a)) = 0$ for all $\varphi \in \ZerosNull{I}$, so again $a \in I$.
\end{proof}

\subsection{Universal enveloping \texorpdfstring{$^*$-algebras}{*-algebras} of real Lie algebras}

Consider a complex vector space $V$ and its tensor algebra $\Tensor(V) \coloneqq \bigoplus_{k=0}^\infty V^{\otimes k}$,
then any antilinear involution $\argument^* \colon V \to V$ can be extended in a unique way to an antilinear involution
$\argument^* \colon \Tensor(V) \to \Tensor(V)$ with which $\Tensor(V)$ becomes a $^*$\=/algebra.

Let $\lie g$ be a real Lie algebra. Its complexification $\lie g \otimes \CC$
is a complex Lie algebra with Lie bracket ${\kom{\argument}{\argument}} \colon (\lie g \otimes \CC) \times (\lie g \otimes \CC) \to \lie g \otimes \CC$,
$\bigl( X \otimes \lambda, Y \otimes \mu \bigr) \mapsto {\kom{X\otimes \lambda}{Y\otimes \mu}} \coloneqq {\kom{X}{Y}} \otimes \lambda\mu$.
We define the antilinear involution $\argument^* \colon \lie g \otimes \CC \to \lie g \otimes \CC$,
\begin{equation}
  X \otimes \lambda \mapsto (X\otimes \lambda)^* \coloneqq - X \otimes \cc{\lambda}
  ,
\end{equation}
where $\cc\argument$ denotes complex conjugation; thus $\Tensor(\lie g \otimes \CC)$ is a $^*$\=/algebra.
Note that ${\kom{V}{W}}^* = {\kom{W^*}{V^*}}$ for all $V,W \in \lie g \otimes \CC$.
Let $J_{\lie g \otimes \CC}$ be the ideal of $\Tensor(\lie g \otimes \CC)$ generated by
\begin{equation}
  \set[\big]{V \otimes W - W \otimes V - {\kom{V}{W}} }{V,W\in \lie g \otimes \CC}
  .
\end{equation}
One can check that $J_{\lie g \otimes \CC}$ is even a $^*$\=/ideal so that the quotient $\Tensor(\lie g \otimes \CC) / J_{\lie g\otimes \CC}$
becomes a $^*$\=/algebra:

\begin{definition} \label{definition:iota}
  Let $\lie g$ be a real Lie algebra, then its \emph{universal enveloping $^*$\=/algebra} $\UnivStar(\lie g)$ is the
  quotient $^*$\=/algebra $\Tensor(\lie g \otimes \CC) \quotient J_{\lie g}$. Moreover we define the map $\iota \colon \lie g \to \UnivStar(\lie g)$,
  \begin{equation}
    X \mapsto \iota(X) \coloneqq [X\otimes 1]
  \end{equation}
  where $[\argument] \colon \Tensor(\lie g \otimes \CC) \to \UnivStar(\lie g)$ is the canonical projection onto the quotient.
\end{definition}
Clearly $\iota \colon \lie g \to \UnivStar(\lie g)$ is $\RR$-linear and maps $\lie g$ to the antihermitian elements
of $\UnivStar(\lie g)$ of degree $1$, and ${\kom[\big]{\iota(X)}{\iota(Y)}} = \iota\bigl(\kom{X}{Y}\bigr)$ holds for all $X,Y\in \lie g$.
By construction, the following universal property is fulfilled:

\begin{proposition} \label{proposition:constructionOfRep}
  Assume $\lie g$ is a real Lie algebra and $\A$ a $^*$\=/algebra. Let $\phi \colon \lie g \to \A$
  be an $\RR$-linear map that fulfils $\phi(X)^* = -\phi(X)$ for all $X\in \lie g$ and 
  ${\kom[\big]{\phi(X)}{\phi(Y)}} = \phi\bigl(\kom{X}{Y}\bigr)$ for all $X,Y\in \lie g$. Then there exists a unique
  $^*$\=/algebra morphism $\Phi \colon \UnivStar(\lie g) \to \A$ such that $\Phi \circ \iota = \phi$.
\end{proposition}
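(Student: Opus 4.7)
The plan is to construct $\Phi$ by invoking the universal property of the tensor algebra and then descending to the quotient $\UnivStar(\lie g) = \Tensor(\lie g \otimes \CC) \quotient J_{\lie g \otimes \CC}$. First, I would complexify $\phi$ to a $\CC$-linear map $\tilde\phi \colon \lie g \otimes \CC \to \A$ by setting $\tilde\phi(X \otimes \lambda) \coloneqq \lambda \phi(X)$. Using the hypothesis $\phi(X)^* = -\phi(X)$ together with the definition $(X \otimes \lambda)^* = -X \otimes \cc{\lambda}$ of the $^*$-involution on $\lie g \otimes \CC$, one checks that $\tilde\phi(V^*) = \tilde\phi(V)^*$ for all $V \in \lie g \otimes \CC$. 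Similarly, the bracket compatibility on $\lie g$ upgrades by $\CC$-bilinearity to ${\kom[\big]{\tilde\phi(V)}{\tilde\phi(W)}} = \tilde\phi(\kom{V}{W})$ for all $V, W \in \lie g \otimes \CC$.

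Next, by the universal property of the tensor algebra in the category of unital complex algebras, the $\CC$-linear map $\tilde\phi$ extends uniquely to a unital algebra morphism $\hat\Phi \colon \Tensor(\lie g \otimes \CC) \to \A$. Because the $^*$-involution on $\Tensor(\lie g \otimes \CC)$ is itself the unique extension of the one on $\lie g \otimes \CC$, the two antilinear anti-homomorphisms $\hat\Phi \circ (\argument^*)$ and $(\argument^*) \circ \hat\Phi$ agree on the generating subspace $\lie g \otimes \CC$ and hence on all of $\Tensor(\lie g \otimes \CC)$. Thus $\hat\Phi$ is in fact a $^*$-algebra morphism.

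Now I would check that $\hat\Phi$ annihilates the defining generators of $J_{\lie g \otimes \CC}$: for $V, W \in \lie g \otimes \CC$,
\begin{equation*}
  \hat\Phi\bigl(V \otimes W - W \otimes V - \kom{V}{W}\bigr) = \tilde\phi(V)\tilde\phi(W) - \tilde\phi(W)\tilde\phi(V) - \tilde\phi\bigl(\kom{V}{W}\bigr) = 0
\end{equation*}
by the bracket compatibility established above. Hence $J_{\lie g \otimes \CC} \subseteq \ker \hat\Phi$, so $\hat\Phi$ descends to a $^*$-algebra morphism $\Phi \colon \UnivStar(\lie g) \to \A$ with $\Phi(\iota(X)) = \Phi([X \otimes 1]) = \phi(X)$, i.e.\ $\Phi \circ \iota = \phi$. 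Uniqueness is immediate: any $^*$-algebra morphism agreeing with $\phi$ on $\iota(\lie g)$ is determined on the algebra generators of $\UnivStar(\lie g)$ and thus on the whole algebra.

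The argument is essentially bookkeeping; the only mildly delicate step is verifying that the complexification $\tilde\phi$ simultaneously respects the $^*$-structure and the Lie bracket, after which everything reduces to the standard universal property of the tensor algebra and a factorization through a two-sided ideal.
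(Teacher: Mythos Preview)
Your proposal is correct and is precisely the standard unpacking of what the paper means by ``by construction, the following universal property is fulfilled'': the paper does not give a separate proof of this proposition, so your argument simply spells out the routine details (complexify, extend to the tensor algebra, check the $^*$-compatibility, and factor through $J_{\lie g \otimes \CC}$) that the authors leave implicit.
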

In many explicit formulas we will, however, drop this (injective) map $\iota$ and by abuse of notation treat elements of $\lie g$ directly
as elements of $\UnivStar(\lie g)$.

\subsection{The Weyl algebra}

For $d\in \NN_0$ the $d$-dimensional Weyl algebra is constructed in a similar way: First we equip the complex vector space $\CC^{2d}$
with the antilinear involution $\argument^* \colon \CC^{2d} \to \CC^{2d}$ of componentwise complex conjugation;
thus $\Tensor(\CC^{2d})$ is a $^*$\=/algebra. We write $p_1, \dots, p_d, q_1,\dots, q_d$ for the standard basis of $\CC^{2d}$
and write $\omega \colon \CC^{2d} \times \CC^{2d} \to \CC$ for the standard symplectic form, i.e.~$\omega(p_k,p_\ell) = \omega(q_k,q_\ell) = 0$
and $\omega(p_k, q_\ell) = - \Kronecker_{k,\ell}$ where $\Kronecker_{k,\ell}$ is the Kronecker-$\delta$, i.e.~$\Kronecker_{k,\ell} \coloneqq 1$
if $k=\ell$ and otherwise $\Kronecker_{k,\ell} \coloneqq 0$.
Let $J_d$ be the ideal of $\Tensor(\CC^{2d})$ that is generated
by the set $\set[\big]{ v \otimes w - w \otimes v - \I \omega(v,w) \Unit}{v,w\in \CC^{2d}}$. Then $J_d$ is a $^*$\=/ideal
and therefore the quotient algebra $\Tensor(\CC^{2d}) \quotient J_d$ becomes a $^*$\=/algebra:

\begin{definition}
  For all $d\in \NN_0$ the \emph{Weyl algebra} $\Weyl{d}$ is the quotient $^*$\=/algebra $\Tensor(\CC^{2d}) \quotient J_d$.
  We also define 
  \begin{equation}
    P_j \coloneqq [p_j]\quad\quad\text{and}\quad\quad Q_j \coloneqq [q_j]
  \end{equation}
  for all $j\in \{1,\dots,d\}$, where $[\argument] \colon \Tensor(\CC^{2d}) \to \Weyl{d}$ is the canonical projection onto the quotient.
\end{definition}
Note that $P_1, \dots,P_d$ and $Q_1, \dots, Q_d$ are hermitian elements of $\Weyl{d}$. Clearly $\Weyl{0} \cong \CC$ and
\begin{equation}
  {\kom{P_k}{P_\ell}} = {\kom{Q_k}{Q_\ell}} = 0
  \quad\quad\text{and}\quad\quad
  {\kom{P_k}{Q_\ell}} = -\I \Kronecker_{k,\ell} \Unit
  \quad\quad\text{for all $k,\ell \in \{1,\dots,d\}$.}
\end{equation}

Recall that a $2$-step nilpotent Lie algebra $\lie g$ is a Lie algebra that fulfils ${\kom[\big]{{\kom{X}{Y}}}{Z}} = 0$ for 
all $X,Y,Z \in \lie g$. For $2$-step nilpotent Lie algebras the Jacobi identity is trivially fulfilled, i.e.~every
antisymmetric bilinear map ${\kom{\argument}{\argument}} \colon \lie g \times \lie g \to \lie g$ that fulfils
${\kom[\big]{{\kom{X}{Y}}}{Z}} = 0$ for all $X,Y,Z \in \lie g$ provides a $2$-step nilpotent Lie algebra.

\begin{definition} \label{definition:varietyBullet}
  Let $\lie g$ be a $2$-step nilpotent real Lie algebra and $d\in \NN_0$. 
  Let $\Weyl{d}^{(1)}$ be the linear subspace of the Weyl algebra $\Weyl{d}$ that is generated by the unit $\Unit$ of $\Weyl{d}$ together
  with $P_1,\dots,P_d, Q_1, \dots, Q_d \in \Weyl{d}$. A $^*$\=/algebra morphism
  $\Phi \colon \UnivStar(\lie g) \to \Weyl{d}$ is called \emph{filtered} if $\Phi(X) \in \Weyl{d}^{(1)}$ for all $X \in \lie g$
  and we write $\variety[d]$ for the set of all filtered $^*$\=/algebra morphisms $\Phi \colon \UnivStar(\lie g) \to \Weyl{d}$.
  The union of these sets will be denoted by $\variety[\bullet] \coloneqq \bigcup_{d=0}^\infty \variety[d]$.
\end{definition}
In the real Nullstellensatz for $2$-step nilpotent Lie algebras this set $\variety[\bullet]$ will take the place
of the affine space $\RR^d$ (or rather: of evaluations at points of $\RR^d$) in the classical real Nullstellensatz for
the polynomial algebra. Following \cite{schmuedgen:StrictPositivstellensatzForEnvelopingAlgebras} we should actually
use (irreducible) integrable $^*$\=/representations of $\UnivStar(\lie g)$. For a finite-dimensional $2$-step nilpotent Lie algebra
$\lie g$, however, every element $\Phi \in \variety[d]$, $d\in \NN_0$ yields such an integrable $^*$\=/representation by
composition with the Schrödinger representation $\pi_d \colon \Weyl{d} \to \Adbar\bigl( \Schwartz(\RR^d)\bigr)$ on the
Schwartz space of rapidly decreasing functions $\Schwartz(\RR^d)$, i.e.
\begin{equation}
  \pi_d(P_j)(\psi)(\xi) = \frac{1}{\I}\frac{\partial \psi}{\partial x_j} (\xi)
  \quad\quad\text{and}\quad\quad
  \pi_d(Q_j)(\psi)(\xi) = \xi_j \psi(\xi)
\end{equation}
for all $\psi \in \Schwartz(\RR^d)$, $\xi \in \RR^d$, and $j\in \{1,\dots,d\}$. For $d=0$ this is the trivial $^*$\=/representation
$\pi_0 \colon \Weyl{0} \to \Adbar(\CC) \cong \CC$. Conversely, every irreducible integrable $^*$\=/representations of 
$\UnivStar(\lie g)$ factors in this way through a Weyl algebra $\Weyl{d}$, but we will not use this fact.
We only make the following observation after which there is no more need to consider $^*$\=/representations any further:

\begin{proposition} \label{proposition:vanishingIsReal}
  Let $\lie g$ be a $2$-step nilpotent real Lie algebra and $S \subseteq \variety[\bullet]$. Then the vanishing ideal
  $\Vanish{S}$ is a real ideal of $\UnivStar(\lie g)$.
\end{proposition}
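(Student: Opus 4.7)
The plan is to exploit the observation already made in Section~\ref{sec:preliminaries} that the kernel of any $^*$\=/algebra morphism into $\Adbar(\Dom)$ for a pre-Hilbert space $\Dom$ is automatically a real ideal, together with the elementary fact that intersections of real ideals are real.

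First I would verify this closure-under-intersections property directly from Definition~\ref{definition:realIdeal}: if $\{I_i\}_{i\in K}$ is any family of real ideals of a $^*$\=/algebra $\A$ and $a_1,\dots,a_k\in \A$ satisfy $\sum_{j=1}^k a_j^* a_j \in \bigcap_{i\in K} I_i$, then $\sum_{j=1}^k a_j^* a_j \in I_i$ for each $i$, so $a_1,\dots,a_k\in I_i$ for every $i$, hence $a_1,\dots,a_k \in \bigcap_{i\in K} I_i$. Since $\Vanish{S}=\bigcap_{\Phi \in S}\ker\Phi$, it therefore suffices to prove that $\ker\Phi$ is a real ideal of $\UnivStar(\lie g)$ for every individual $\Phi \in \variety[\bullet]$.

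Fix $d\in \NN_0$ and $\Phi \in \variety[d]$. I would compose $\Phi$ with the Schrödinger representation $\pi_d\colon \Weyl{d}\to \Adbar(\Schwartz(\RR^d))$ to obtain the $^*$\=/algebra morphism $\pi_d\circ \Phi\colon \UnivStar(\lie g)\to \Adbar(\Schwartz(\RR^d))$. By the observation recalled in Section~\ref{sec:preliminaries}, $\ker(\pi_d\circ \Phi)$ is a real ideal. The inclusion $\ker \Phi \subseteq \ker(\pi_d\circ\Phi)$ is trivial, so the proof reduces to showing the reverse inclusion, i.e.~that $\pi_d$ is faithful on $\Weyl{d}$ (or, slightly weaker, that $\sum_{j=1}^k b_j^* b_j = 0$ in $\Weyl{d}$ implies $b_1=\dots=b_k=0$, which then lets us transfer realness from $\pi_d\circ\Phi$ to $\Phi$).

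The main obstacle is this faithfulness statement for $\pi_d$. It is a classical fact, which I would establish by noting that the canonical PBW-type basis $\{P^\alpha Q^\beta\}_{\alpha,\beta\in\NN_0^d}$ of $\Weyl{d}$ is sent by $\pi_d$ to a linearly independent family of polynomial-coefficient differential operators on $\Schwartz(\RR^d)$: a non-zero such operator cannot act as zero on every Schwartz function, as one sees by testing against suitably chosen Gaussians and polynomials. Once faithfulness of $\pi_d$ is in place, for any $a_1,\dots,a_k\in \UnivStar(\lie g)$ with $\sum_{j=1}^k a_j^* a_j\in \ker \Phi$, we have $0=\sum_{j=1}^k \bigl\langle\pi_d(\Phi(a_j))\psi,\pi_d(\Phi(a_j))\psi\bigr\rangle$ for every $\psi\in \Schwartz(\RR^d)$, forcing $\pi_d(\Phi(a_j))=0$ and hence $\Phi(a_j)=0$ for each $j$. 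This proves $\ker\Phi$ is real, and the theorem follows.
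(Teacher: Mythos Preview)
Your proposal is correct and follows essentially the same route as the paper: reduce to a single $\Phi$ via closure of real ideals under intersection, compose with the Schr\"odinger representation $\pi_d$, and use faithfulness of $\pi_d$ to conclude $\ker\Phi=\ker(\pi_d\circ\Phi)$, which is real. The only difference is cosmetic: the paper simply cites injectivity of $\pi_d$ as well-known (invoking simplicity of $\Weyl{d}$), whereas you sketch a direct PBW/differential-operator argument for it.
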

\begin{proof}
  It is easy to check that an arbitrary intersection of real ideals of $\UnivStar(\lie g)$ is again a real ideal,
  so we only have to check that for every $\Phi \in S$ the ideal $\ker \Phi$ of $\UnivStar(\lie g)$ is real.
  Let $\Phi \in S$ be given, then there is $d\in \NN_0$ such that $\Phi \in \variety[d]$, i.e.~$\Phi$ is a filtered
  $^*$\=/morphism $\UnivStar(\lie g) \to \Weyl{d}$. It is well-known that the Schrödinger representation
  $\pi_d \colon \Weyl{d} \to \Adbar\bigl( \Schwartz(\RR^d)\bigr)$ is injective (in fact, $\Weyl{d}$ is a simple $^*$\=/algebra),
  so $\ker \Phi = \ker (\pi_d \circ \Phi)$, which is a real ideal of $\UnivStar(\lie g)$ because $\pi_d \circ \Phi$ maps
  to a $^*$\=/algebra of adjointable endomorphisms on a pre-Hilbert space.
\end{proof}

\begin{example} \label{example:heisenberg}
  The Heisenberg Lie algebra $\lie h$ is the $2$-step nilpotent real Lie algebra with basis $X,Y,Z \in \lie h$ and with the
  Lie bracket that fulfils $\kom{X}{Y} = Z$ and $\kom{X}{Z} = \kom{Y}{Z} = 0$. For $\lambda \in \RR\setminus \{0\}$ the linear map
  $\phi_\lambda \colon \lie h \to \Weyl{1}$ that is given by $\phi_\lambda(X) \coloneqq \I P_1$, $\phi_\lambda(Y) \coloneqq \I \lambda Q_1$, 
  $\phi_\lambda(Z) \coloneqq \I \lambda$ extends to a $^*$\=/algebra morphism $\Phi_\lambda \colon \UnivStar(\lie h) \to \Weyl{1}$
  by Proposition~\ref{proposition:constructionOfRep}, and $\Phi_\lambda$ clearly is filtered, so $\Phi_\lambda \in \variety[1]$.
  Similarly, for all $\xi,\eta \in \RR$ the linear map $\psi_{\xi,\eta} \colon \lie h \to \Weyl{0}$ that is given by
  $\psi_{\xi,\eta}(X) \coloneqq \I \xi$, $\psi_{\xi,\eta}(Y) \coloneqq \I \eta$, and $\psi_{\xi,\eta}(Z) \coloneqq 0$
  also extends to a $^*$\=/algebra morphism $\Psi_{\xi,\eta} \colon \UnivStar(\lie h) \to \Weyl{0}$
  by Proposition~\ref{proposition:constructionOfRep}, and $\Psi_{\xi,\eta}$ is trivially filtered because $\Weyl{0}^{(1)} = \Weyl{0}$.
  So $\Psi_{\xi,\eta} \in \variety[0]$.
  Note that $Z$ is in the kernel of all the filtered $^*$\=/algebra morphisms of this second type, which take values in
  $\Weyl{0} \cong \CC$. This is an instance of a much more general principle:
  Define the inner derivation $\ad_X \colon \UnivStar(\lie h) \to \UnivStar(\lie h)$,
  $a \mapsto \ad_X(a) \coloneqq {\kom{X}{a}}$, then $(\ad_X)^k(Y^k) = k! Z^k$ for all $k\in \NN$.
  Assume $\Phi \colon \UnivStar(\lie h) \to \A$ is a $^*$\=/algebra morphism into a $C^*$\=/algebra $\A$.
  Then
  \begin{equation}
    \norm[\big]{\Phi(Z^k)}^{\frac{1}{k}}
    =
    \biggl(\frac{1}{k!}\norm[\big]{\Phi\bigl((\ad_X)^k(Y^k)\bigr)}\biggr)^{\frac{1}{k}}
    \le
    \frac{2 \,\norm{\Phi(X)}\, \norm{\Phi(Y)} }{(k!)^{1/k}} 
    \xrightarrow{k\to\infty}
    0
  \end{equation}
  holds. Moreover, $\norm{\Phi(Z)^k} = \norm{\Phi(Z)}^k$ because $Z$ is antihermitian, hence normal,
  so this estimate shows that $\norm{\Phi(Z)} = 0$, i.e.~$\Phi(Z) = 0$
  for every $^*$\=/algebra morphism into a $C^*$\=/algebra $\A$. But we already know that there are other $^*$\=/representations
  that do not vanish on $Z$, like $\Phi_\lambda$ for $\lambda \neq 0$ composed with the Schrödinger representation
  of $\Weyl{1}$.
  Therefore any real Nullstellensatz for $\UnivStar(\lie h)$ must take into account $^*$\=/representations by unbounded operators.
\end{example}

\section{The real Nullstellensatz for \texorpdfstring{\boldmath$2$-step}{2-step} nilpotent Lie algebras}
\label{sec:theorem}

Recall the definitions of the set $\variety[\bullet]$ of filtered $^*$\=/homomorphisms from $\UnivStar(\lie g)$ to a Weyl algebra
(Definition~\ref{definition:varietyBullet}) and of the vanishing ideal $\Vanish{S}$ of $S\subseteq \variety[\bullet]$
(Definition~\ref{definition:vanishingIdeal}).

\begin{definition}
  Assume $\lie g$ is a finite-dimensional $2$-step nilpotent real Lie algebra and let $I$ be an ideal of the $^*$\=/algebra $\UnivStar(\lie g)$.
  We write
  \begin{equation}
    \Zeros{I} \coloneqq \set[\big]{ \Phi \in \variety[\bullet]}{I \subseteq \ker \Phi}
    .
  \end{equation}
\end{definition}
This set $\Zeros{I}$ will replace the variety of $^*$\=/characters $\ZerosNull{I}$ appearing in the commutative real
Nullstellensatz Lemma~\ref{lemma:commutativeNullstellensatz}; in contrast to $\ZerosNull{I}$, it contains also $^*$\=/homomorphisms
with values in Weyl algebras. Our main result is the following real Nullstellensatz for $2$-step nilpotent Lie algebras:

\begin{theorem} \label{theorem}
  Assume $\lie g$ is a finite-dimensional $2$-step nilpotent real Lie algebra and let $I$ be an ideal of the $^*$\=/algebra $\UnivStar(\lie g)$.
  Then the following are equivalent:
  \begin{enumerate}
    \item\label{item:theorem:real}
      $I$ is real.
    \item\label{item:theorem:Nullstellensatz}
      $I$ is the vanishing ideal of its set of zeros, i.e.~$I = \Vanish[\big]{\Zeros{I}}$.
  \end{enumerate}
\end{theorem}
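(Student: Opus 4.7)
Direction (ii) $\Rightarrow$ (i) is immediate from Proposition~\ref{proposition:vanishingIsReal}: $\Vanish{\Zeros{I}}$ is always a real ideal, so if $I$ equals it, then $I$ is real.

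For (i) $\Rightarrow$ (ii), since $I \subseteq \Vanish{\Zeros{I}}$ is automatic, I have to show that every $a \in \UnivStar(\lie g) \setminus I$ admits some $\Phi \in \Zeros{I}$ with $\Phi(a) \neq 0$. My plan is to reduce to the commutative real Nullstellensatz (Lemma~\ref{lemma:commutativeNullstellensatz}) through a central-specialization argument. Set $\lie z \coloneqq \kom{\lie g}{\lie g}$, which is central in $\lie g$ because $\lie g$ is $2$-step nilpotent, and pick a vector-space complement $\lie v$ with $\lie g = \lie v \oplus \lie z$. Then $\iota(\lie z)$ consists of central antihermitian elements of $\UnivStar(\lie g)$, so the *-subalgebra $\B \coloneqq \genSAlg{\iota(\lie z)}$ is commutative and canonically isomorphic to the universal enveloping *-algebra of the abelian Lie algebra $\lie z$. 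Reality of $I$ is inherited by $I \cap \B$ (sums of squares in $\B$ that lie in $I$ have, by reality of $I$, each summand in $I \cap \B$), so Lemma~\ref{lemma:commutativeNullstellensatz} describes $I \cap \B$ as the vanishing ideal of $\ZerosNull{I \cap \B}$; each such *-character corresponds to an $\RR$-linear functional $\lambda \colon \lie z \to \RR$ via $\iota(Z) \mapsto \I \lambda(Z)$.

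For each such admissible $\lambda$ I build filtered *-morphisms to a Weyl algebra as follows. The skew form $\omega_\lambda \colon \lie v \times \lie v \to \RR$, $\omega_\lambda(X,Y) \coloneqq \lambda(\kom{X}{Y})$, has some kernel $\lie k_\lambda$ and induces a non-degenerate symplectic form on $\lie v / \lie k_\lambda$ of even dimension $2 d_\lambda$. Picking a Darboux basis $p_1, \dots, p_{d_\lambda}, q_1, \dots, q_{d_\lambda}$ of $\lie v / \lie k_\lambda$, a basis of $\lie k_\lambda$, and any $\RR$-linear $\mu \colon \lie k_\lambda \to \RR$, define $\phi_{\lambda, \mu} \colon \lie g \to \Weyl{d_\lambda}$ by $\iota(Z) \mapsto \I \lambda(Z) \Unit$ on $\lie z$, $\iota(X) \mapsto \I \mu(X) \Unit$ on $\lie k_\lambda$, and the Darboux pairs to suitably rescaled Weyl generators $P_i$, $Q_j$. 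By construction the bracket relations are satisfied, so Proposition~\ref{proposition:constructionOfRep} gives a filtered *-algebra morphism $\Phi_{\lambda, \mu} \in \variety[d_\lambda]$. Using a PBW decomposition of $\UnivStar(\lie g)$ as a free $\B$-module with $\lie v$-monomial basis, together with the fact that $\chi_\lambda$ annihilates $I \cap \B$, one should deduce $I \subseteq \ker \Phi_{\lambda, \mu}$ for every $\mu$, so that each $\Phi_{\lambda, \mu} \in \Zeros{I}$.

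The \emph{main obstacle}, and the heart of the proof, is to exhibit some $\Phi_{\lambda, \mu}$ that does not kill $a$. My approach: after reducing modulo $I$ and specializing $\B$ via $\chi_\lambda$, the image of $a$ lies in a Weyl algebra tensored with a commutative polynomial ring in the $\lie k_\lambda$-directions; a second application of Lemma~\ref{lemma:commutativeNullstellensatz} to that commutative factor, combined with the simplicity of $\Weyl{d_\lambda}$ (as invoked in the proof of Proposition~\ref{proposition:vanishingIsReal}), should then produce a $\mu$ with $\Phi_{\lambda, \mu}(a) \neq 0$ provided the image of $a$ is already nonzero in the quotient. The delicate point is choosing $\lambda$ for which this is so: since the rank of $\omega_\lambda$ is in general not constant in $\lambda$ (as the example of $\fthree$ promised in Section~\ref{sec:examples} is presumably designed to highlight), no single admissible $\lambda$ will work uniformly, and one is forced to stratify $\lie z^*$ by the rank of $\omega_\lambda$ and argue by a careful induction, handling the jumps of $\lie k_\lambda$ across strata, to guarantee that every $a \notin I$ is separated from $I$ by some $\Phi_{\lambda, \mu}$.
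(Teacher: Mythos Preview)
There are two genuine gaps in your argument. First, the claim that $I \subseteq \ker \Phi_{\lambda,\mu}$ for \emph{every} $\mu$ is false: in the Heisenberg algebra take $I$ to be the real $^*$-ideal generated by $X$ (so $Z = \kom{X}{Y} \in I$ and $I \cap \B = (Z)$); the only admissible $\lambda$ is $0$, yet $\Phi_{0,\mu}(X) = \I\mu(X) \neq 0$ for generic $\mu$. Knowing $I \cap \B$ constrains $\lambda$ but says nothing about the $\lie k_\lambda$-directions, because the PBW coefficients of a general element of $I$ need not lie in $I \cap \B$. Second, and more seriously, the proposed repair --- specialise at $\lambda$, write the image of $I$ in $\Weyl{d_\lambda} \otimes \CC[\lie k_\lambda]$ as $\Weyl{d_\lambda} \otimes J_\lambda$, and apply Lemma~\ref{lemma:commutativeNullstellensatz} to $J_\lambda$ --- breaks down because $J_\lambda$ need not be real: specialisation at a single $\lambda$ replaces $I$ by $I + \ker\pi_\lambda$, and sums of real ideals are not real in general (already in $\RR[x,y]$ one has $(y-x^2)+(y)=(x^2,y)$). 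So you cannot invoke the commutative Nullstellensatz for the $\mu$-step, and the argument stalls exactly at the point you flag as delicate.

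The paper avoids both problems by never specialising at a single $\lambda$. For each stratum label $K$ it enlarges the central subalgebra to $\UnivStar(\lie g)_K^\inv$, generated by the centre together with certain ``invariant'' elements $E_{K,m} \in \UnivStar(\lie g)\setminus\lie g$ (Definition~\ref{definition:generator}), and applies Lemma~\ref{lemma:commutativeNullstellensatz} to the honest real ideal $I \cap \UnivStar(\lie g)_K^\inv$ as a whole. A commutator trick (Proposition~\ref{proposition:commutatortrick}) then shows that $\Pfaffian(K)^r a$ expands with coefficients in $I \cap \UnivStar(\lie g)_K^\inv$ for every $a \in I$; this is the missing link between $I$ and its commutative trace, and it yields $\Pfaffian(K)\,\Vanish{S}\subseteq I\subseteq\Vanish{S}$ on each stratum (Lemma~\ref{lemma:theorem}). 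The passage between strata is handled not by specialisation but by ideal quotients $I : \Vanish{S}$ (Proposition~\ref{proposition:stepwise}), which remain real by Lemma~\ref{lemma:realIdealQuotients}; this is the mechanism that makes the induction you allude to actually go through.
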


\begin{startofproof}
  The implication \refitem{item:theorem:Nullstellensatz}~$\implies$~\refitem{item:theorem:real} has already been shown in Proposition~\ref{proposition:vanishingIsReal}.
  For the converse implication \refitem{item:theorem:real}~$\implies$~\refitem{item:theorem:Nullstellensatz} we first note
  that every real ideal $I$ of $\UnivStar(\lie g)$ clearly fulfils $I \subseteq \Vanish{\Zeros{I}}$. 
  It is not hard to see that, in order to prove the equality, it suffices to show that for every real ideal $I$ of $\UnivStar(\lie g)$
  there exists a subset $S$ of $\variety[\bullet]$ such that $I = \Vanish{S}$. Namely, the mappings $S \mapsto \Vanish{S}$ and
  $I \mapsto \Zeros{I}$ define an antitone Galois connection between the ideals of $\UnivStar(\lie g)$ and the subsets
  of $\variety[\bullet]$, therefore $\Vanish{S} = \Vanish{\Zeros{\Vanish{S}}}$. The construction of the set $S$ will occupy the rest of this section.
\end{startofproof}
In the next Section~\ref{sec:realIdealQuotients} we will develop an iterative process that allows us to split the construction
of such a set $S$ into easier subproblems. This process will then be implemented and will ultimately be completed in Section~\ref{sec:proofCompletion}.
Two instructive examples are discussed later in Section~\ref{sec:examples}.

\subsection{An iterative construction} \label{sec:realIdealQuotients}
In order to cope with e.g.~the different types of filtered $^*$\=/algebra morphisms $\Phi \colon \UnivStar(\lie g) \to \Weyl{d}$
for different $d\in \NN_0$ we will have to apply an iterative construction.
The important ingredient on the algebraic side to make this work are ideal quotients:

For any ring $\ring R$ and two ideals $I, J$ of $\ring R$, the \emph{ideal quotient of $I$ by $J$} is usually defined as
\begin{equation}
  I : J \coloneqq \set{r\in\ring R}{r J \subseteq I \textup{ and } J r \subseteq I}
  .
\end{equation}
It is easy to check that $I : J$ is again an ideal of $\ring R$ and that $I \subseteq I : J$.
The general idea behind this construction is that, at least in well-behaved commutative cases and if $I$ and $J$ are vanishing ideals
of some varieties $S$ and $T$, respectively, then $I:J$ is the vanishing ideal of $S \setminus T$.
We are especially interested in the case that $I$ is a real ideal of a not necessarily commutative $^*$\=/algebra:

\begin{lemma} \label{lemma:realIdealQuotients}
  Assume $\A$ is a $^*$-algebra, $I$ is a real ideal of $\A$ and $J$ a $^*$\=/ideal of $\A$.
  Then $I : J$ is again a real ideal of $\A$ and the identities
  \begin{equation} \label{eq:realIdealQuotients}
    I : J = \set[\big]{a\in\A}{a J \subseteq I}
    \quad\quad\text{and}\quad\quad
    (I : J) \cap J = I \cap J
  \end{equation}
  hold.
\end{lemma}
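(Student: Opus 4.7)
The plan is to introduce the apparently weaker, one-sided set $K \coloneqq \set{a \in \A}{aJ \subseteq I}$ and show that it coincides with $I:J$. Once this is done, the first identity in~\eqref{eq:realIdealQuotients} and the reality of $I:J$ follow simultaneously; the second identity will be a short separate argument.

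First I would verify that $K$ is a (two-sided) ideal: it is clearly closed under addition; for $a \in K$ and $c \in \A$, $(ca)J = c(aJ) \subseteq cI \subseteq I$ gives the left ideal property, while $(ac)J = a(cJ) \subseteq aJ \subseteq I$ (using $cJ \subseteq J$ because $J$ is an ideal) gives the right ideal property. The main step is then to prove that $K$ is a \emph{real} ideal. Suppose $\sum_{j=1}^k a_j^* a_j \in K$ and pick any $b \in J$. Because $I$ is a two-sided ideal,
\begin{equation*}
  \sum_{j=1}^k (a_j b)^*(a_j b) \;=\; b^*\biggl(\sum_{j=1}^k a_j^* a_j\biggr) b \;\in\; I,
\end{equation*}
since $(\sum_j a_j^* a_j) b \in I$ by definition of $K$. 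Reality of $I$ then forces $a_j b \in I$ for each $j$. Since $b \in J$ was arbitrary, $a_j J \subseteq I$, i.e.\ $a_j \in K$, proving reality of $K$.

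Reality implies that $K$ is a $^*$\=/ideal (as noted right after Definition~\ref{definition:realIdeal}), so $K = K^*$. Taking adjoints of the defining condition of $K$ and using $J = J^*$ and $I = I^*$, the condition $a^* J \subseteq I$ is equivalent to $Ja \subseteq I$. Thus $a \in K$ already entails $Ja \subseteq I$, which shows $K \subseteq I:J$; the reverse inclusion is built into the definition of $I:J$. This proves $I:J = K = \set{a \in \A}{aJ \subseteq I}$, and in particular $I:J$ is a real ideal. For the second identity $(I:J) \cap J = I \cap J$, only the inclusion ``$\subseteq$'' requires argument: any $a \in (I:J) \cap J$ has $a^* \in J$ (as $J$ is a $^*$\=/ideal), so $aa^* \in I$ by $aJ \subseteq I$; writing $aa^* = (a^*)^*(a^*)$ and invoking reality of $I$ yields $a^* \in I$, hence $a \in I$.

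The step I expect to demand the most care is the transition from the one-sided condition $aJ \subseteq I$ to the two-sided one defining $I:J$: attempting to do it directly by manipulating expressions such as $a^* b^* b a$ leads nowhere, because reality of $I$ is not readily available in the needed form. The clean workaround is to pass through $K$, prove \emph{its} reality (which uses only the symmetric quadratic combinations $(a_j b)^*(a_j b)$), and then harvest the two-sided condition for free from the general fact that real ideals are $^*$\=/ideals.
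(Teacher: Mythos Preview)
Your proof is correct and uses the same ingredients as the paper's. One minor difference in ordering: the paper establishes the first identity \emph{before} reality, by noting that for $a$ with $aJ\subseteq I$ and $b\in J$ one has $(ba)^*\in J$, hence $a(ba)^*\in I$, hence $(ba)(ba)^*\in I$, so reality of $I$ yields $(ba)^*\in I$ and thus $ba\in I$---so the ``direct'' route you expected to lead nowhere does work, just with $(ba)(ba)^*$ in place of $(ba)^*(ba)$.
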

\begin{proof}
  It is clear that $I : J \subseteq \set{a\in\A}{a J \subseteq I}$. Conversely, consider an element $a\in \A$ that fulfils $a J \subseteq I$.
  Given $b \in J$, then $(ba)^* \in J$ because $J$ is a $^*$\=/ideal, so $a(ba)^* \in I$ by assumption, hence also $(ba)(ba)^* \in I$ and
  therefore $(ba)^* \in I$ because $I$ is real. As the real ideal $I$ is automatically a $^*$\=/ideal, this shows that $ba\in I$.
  So $J a \subseteq I$ holds, proving the first identity in \eqref{eq:realIdealQuotients}.

  Next we show that $I : J$ is real, so consider $a_1, \dots, a_k \in \A$, $k\in \NN$ such that $\sum_{j=1}^k a^*_j a_j \in I : J$.
  For $b \in J$ this means $\sum_{j=1}^k a^*_j a_j b \in I$, hence also $\sum_{j=1}^k (a_j b)^* (a_j b) \in I$.
  So $a_j b \in I$ for all $j \in \{1,\dots,k\}$ because $I$ is real.
  As $b \in J$ was arbitrary this shows $a_j \in I:J$ for all $j \in \{1,\dots,k\}$ by the first part.
  
  Finally, $I \cap J \subseteq (I:J) \cap J$ is clear since $I \subseteq I:J$.
  Conversely, consider $a \in (I:J) \cap J$. Then $a^* a \in I$ because $a \in I : J$ and $a^* \in J$.
  As $I$ is real it follows that $a \in I$.
  This shows that $(I:J) \cap J \subseteq I$, therefore also $(I:J) \cap J \subseteq I \cap J$,
  and thus the second identity in \eqref{eq:realIdealQuotients} is proven.
\end{proof}

Recall that a \emph{partially ordered set} is a tuple $(S,\le)$ of a set $S$ and a reflexive, transitive, and antisymmetric relation $\le$ on $S$.
Given such a partially ordered set $(S,\le)$ and two elements $s,t \in S$, then we write ``$s < t$'' for ``$s \le t$ and $s\neq t$'' as usual.

\begin{lemma} \label{lemma:partial2totalOrdering}
  Let $(\FinSet, \le)$ be a finite partially ordered set and write $M \in \NN_0$ for the number of elements of $\FinSet$.
  Then there is a bijection $\kappa \colon \{1,\dots,M\} \to \FinSet$, $m \mapsto \kappa_m$ such that $m < n$ holds for all
  $m,n \in \{1,\dots,M\}$ with $\kappa_m < \kappa_n$.
\end{lemma}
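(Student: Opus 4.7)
The plan is to prove this by induction on the number $M$ of elements of $\FinSet$, essentially constructing a linear extension of the partial order by repeatedly extracting a minimal element. The base case $M = 0$ is vacuous since the empty map is a bijection of the empty set and there is nothing to verify.

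For the inductive step, I would use the standard fact that every non-empty finite partially ordered set has at least one minimal element, which follows because any strictly decreasing chain in a finite set must terminate. I would pick such a minimal element $\kappa_1 \in \FinSet$ and then apply the induction hypothesis to the partially ordered set $(\FinSet \setminus \{\kappa_1\}, \le)$ with the restricted order, obtaining a bijection $\{1,\dots,M-1\} \to \FinSet \setminus \{\kappa_1\}$. Shifting indices by $1$ turns this into the remaining values $\kappa_2,\dots,\kappa_M$ of the desired bijection $\kappa$.

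To verify the compatibility property, suppose $m,n \in \{1,\dots,M\}$ satisfy $\kappa_m < \kappa_n$. Because $\kappa_1$ is minimal, nothing in $\FinSet$ is strictly smaller than $\kappa_1$; hence $n \neq 1$, i.e.\ $n \ge 2$. If $m = 1$, then $m < n$ is immediate. If $m \ge 2$, then both $\kappa_m$ and $\kappa_n$ lie in $\FinSet \setminus \{\kappa_1\}$, and the inductive hypothesis applied to the restricted partial order yields $m - 1 < n - 1$, i.e.\ $m < n$. This completes the induction.

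There is no real obstacle here; the argument is entirely elementary once one invokes the existence of a minimal element in every finite non-empty poset. The one point to keep in mind is simply that the condition in the statement is about the \emph{strict} order, so the minimality of $\kappa_1$ is exactly what is needed to exclude the case $n=1$, and no additional compatibility needs to be checked between $\kappa_1$ and indices coming from the inductive step.
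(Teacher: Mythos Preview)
Your proof is correct and follows essentially the same approach as the paper: both arguments repeatedly extract a minimal element to build the bijection, with the paper phrasing this as a direct recursive construction and you as an induction on $M$. The verification of the compatibility condition is likewise the same in spirit, using minimality to exclude the case $n=1$.
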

\begin{proof}
  Recall that every nonempty finite partially ordered set $(S, \le)$ has a minimal element $s_{\min} \in S$,
  i.e.~$\set{s \in S}{s < s_{\min}} = \emptyset$ (note that the existence of a minimal element in general does not imply the existence of the minimum).
  Such a minimal element can e.g.~be obtained as follows:
  Set $k \coloneqq 0$ and choose any element $s_k\in S$; as long as $s_k$ is not minimal in $S$,
  choose $s_{k+1} \in \set{s \in S}{s < s_k}$ and repeat with $k+1 \in \NN_0$ in place of $k$.
  This iteration results in a strictly decreasing sequence $s_0 > \dots > s_k$,
  which must terminate after finitely many steps at a minimal element $s_k$ of $S$.
  
  We can thus recursively construct a bijection $\kappa \colon \{1,\dots,M\} \to \FinSet$ in such a way that
  $\kappa_m$ is minimal in $\FinSet \setminus \{\kappa_1, \dots, \kappa_{m-1}\}$ for all $m \in \{1,\dots,M\}$.
  Now consider $m,n \in \{1,\dots,M\}$ with $\kappa_m < \kappa_n$. Then $\kappa_m \in \{\kappa_1, \dots, \kappa_{n-1}\}$
  by minimality of $\kappa_n$, so $m \in \{1,\dots,n-1\}$ because $\kappa$ is a bijection.
\end{proof}

\begin{proposition} \label{proposition:stepwise}
  Consider a $^*$\=/algebra $\A$, a finite partially ordered set $(\FinSet, \le)$,
  a map $\FinSetMapMap \colon \FinSet \to \A$, $K \mapsto \FinSetMap{K}$, and assume that
  the minimum $\min \FinSet$ exists and that $\FinSetMap{\min \FinSet} = \Unit$.
  An ideal $I$ of $\A$ is said to be \emph{of type $K$} for $K \in \FinSet$ if $\FinSetMap{L} \in I$ for all $L \in \FinSet$ with $K < L$.
  Assume that to every tuple $(K,I)$ with $K \in \FinSet$ and $I$ a real ideal of $\A$ of type $K$
  there is assigned a set $S(K,I)$ of $^*$\=/algebra morphisms with domain $\A$ such that
  \begin{equation}
    \label{eq:stepwise}
    \FinSetMap{K} \, \Vanish[\big]{S(K,I)} \subseteq I \subseteq \Vanish[\big]{S(K,I)}
  \end{equation}
  holds.
  Then for every real ideal $I$ of $\A$ there are $M\in \NN$, elements $\kappa_1, \dots, \kappa_M \in \FinSet$,
  and real ideals $J_1, \dots, J_M \subseteq \A$ with $J_m$ of type $\kappa_m$
  for all $m\in \{1,\dots,M\}$ such that $\Vanish[\big]{\bigcup_{m=1}^M S(\kappa_m,J_m)} = I$.
\end{proposition}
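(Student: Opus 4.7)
My plan is to proceed by strong induction on the integer $n(I) \coloneqq \abs[\big]{\set{K \in \FinSet}{\FinSetMap{K} \notin I}}$. The base cases $n(I) \leq 1$ are straightforward: if $\Unit \in I$ then $I = \A$ and the claim holds trivially, and if $\FinSetMap{K} \in I$ for all $K \neq \min \FinSet$ then $I$ is of type $\min \FinSet$, so the inclusion~\eqref{eq:stepwise} together with $\FinSetMap{\min \FinSet} = \Unit$ yields $\Vanish{S(\min \FinSet, I)} = I$, completing the claim with $M = 1$, $\kappa_1 = \min \FinSet$, and $J_1 = I$.

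For the inductive step $n(I) \geq 2$, I would pick an element $L$ that is maximal in the nonempty finite subset $\set{K \in \FinSet}{\FinSetMap{K} \notin I}$ of $\FinSet$. By maximality of $L$ in this subset, every $K' \in \FinSet$ with $K' > L$ satisfies $\FinSetMap{K'} \in I$, so $I$ is itself of type $L$. The assumption on $S(L, I)$ then gives $\FinSetMap{L}\,\Vanish{S(L, I)} \subseteq I \subseteq \Vanish{S(L, I)}$. I would now introduce the smallest real ideal $I^+$ of $\A$ containing $I \cup \{\FinSetMap{L}\}$. Since $\FinSetMap{L} \notin I$ but $\FinSetMap{L} \in I^+$, we have $n(I^+) < n(I)$ and the induction hypothesis applies to $I^+$: there exist $\kappa_m^+$ and real ideals $J_m^+$ of type $\kappa_m^+$ such that $\Vanish[\big]{\bigcup_m S(\kappa_m^+, J_m^+)} = I^+$. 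Adjoining $S(L, I)$ (with the new entry $\kappa = L$, $J = I$, which is of type $L$) to this collection produces a set $S$ with $\Vanish{S} = \Vanish{S(L, I)} \cap I^+$, reducing the whole problem to the key identity
\[
  \Vanish{S(L, I)} \cap I^+ = I.
\]

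The main obstacle is this key identity, in particular the non-obvious inclusion $\Vanish{S(L, I)} \cap I^+ \subseteq I$. To approach it, I would first establish $\Vanish{S(L, I)} \subseteq I : J_L$, where $J_L$ is the $^*$\=/ideal generated by $\FinSetMap{L}$. This uses that $\Vanish{S(L, I)}$, being an intersection of kernels of $^*$\=/algebra morphisms, is a $^*$\=/ideal of $\A$, together with the reality of $I$: given $a \in \Vanish{S(L, I)}$, the assumption yields $\FinSetMap{L} a \in I$, and a standard reality argument (applied to $a^* \FinSetMap{L} \FinSetMap{L}^* a \in I$ and its variants) should upgrade this to $\FinSetMap{L}^* a, a \FinSetMap{L}, a\FinSetMap{L}^* \in I$, and hence $a J_L \subseteq I$. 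Combined with Lemma~\ref{lemma:realIdealQuotients}, which gives $(I : J_L) \cap J_L = I \cap J_L$, a short direct computation then shows the weaker version $\Vanish{S(L, I)} \cap (I + J_L) \subseteq I$: if $a = i + j$ with $i \in I$ and $j \in J_L$ lies in $\Vanish{S(L, I)}$, then $j = a - i \in I : J_L$, so $j \in (I:J_L) \cap J_L = I \cap J_L \subseteq I$, and hence $a \in I$.

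The remaining step, extending this to $\Vanish{S(L, I)} \cap I^+ \subseteq I$, is where I expect the most delicacy: $I^+$ is obtained from $I + J_L$ by iterated real-closure operations together with ideal and $^*$\=/closure, and at each such step one must verify that elements newly added to $I^+$, when they also lie in $\Vanish{S(L, I)}$, already lie in $I$. I would formalize this via induction on the stages of the construction of $I^+$: the crucial observation is that if $x_1, \dots, x_k \in \Vanish{S(L, I)}$ and $\sum_j x_j^* x_j$ belongs to a previous stage which, by inner induction, already intersects $\Vanish{S(L, I)}$ only in $I$, then $\sum_j x_j^* x_j \in I$ (since this sum also lies in $\Vanish{S(L, I)}$), and reality of $I$ forces each $x_j \in I$. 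Handling the $^*$\=/ and ideal-closure steps in parallel, and in particular tracking the symmetric relationship between left and right multiplication under the real-ideal property, is the technical core of the argument.
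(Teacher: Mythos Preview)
Your overall strategy---strong induction on $n(I) = \abs{\{K \in \FinSet : \FinSetMap{K} \notin I\}}$, picking a maximal $L$ with $\FinSetMap{L} \notin I$, and peeling off $S(L,I)$---is a legitimate reorganisation of the paper's argument, which instead fixes a linear extension $\kappa_1,\dots,\kappa_M$ of $(\FinSet,\le)$ once and for all and builds the ideals $J_M \supseteq J_{M-1} \supseteq \cdots$ recursively as ideal quotients $J_{m-1} \coloneqq J_m : \Vanish{S(\kappa_m,J_m)}$. The two schemes are equivalent in spirit; the difference is whether one descends through $\FinSet$ adaptively (as you do) or along a prescribed linear order (as the paper does).

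There is, however, a genuine gap in your final step. Write $V \coloneqq \Vanish{S(L,I)}$. Your ``induction on the stages of the real closure'' assumes that when a new element $x_1 \in V$ enters $I^+$ via a relation $\sum_{j=1}^k x_j^* x_j \in I^{(n)}$, all the other witnesses $x_2,\dots,x_k$ also lie in $V$. But the real-closure construction imposes no such constraint: the $x_j$ for $j \ge 2$ may be arbitrary elements of $\A$. So from the inductive hypothesis $I^{(n)} \cap V \subseteq I$ you cannot conclude $\sum_j x_j^* x_j \in I$ (you do not even know this sum lies in $V$), and the argument breaks down.

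The fix is to avoid tracking the real closure stage by stage and instead compare $I^+$ with the ideal quotient $I : V$. By Lemma~\ref{lemma:realIdealQuotients}, $I : V$ is a real ideal (since $V$ is a $^*$\=/ideal and $I$ is real), it contains $I$, and it contains $\FinSetMap{L}$ because $\FinSetMap{L}\,V \subseteq I$ by hypothesis. As $I^+$ is by definition the smallest real ideal containing $I \cup \{\FinSetMap{L}\}$, this forces $I^+ \subseteq I : V$. Now the second identity of Lemma~\ref{lemma:realIdealQuotients} gives
\[
  I^+ \cap V \;\subseteq\; (I:V) \cap V \;=\; I \cap V \;\subseteq\; I,
\]
which is exactly the inclusion you need. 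With this replacement your argument goes through; note that it also renders your intermediate detour through $J_L$ and $I + J_L$ unnecessary. In fact you may as well take $I:V$ itself in place of $I^+$ as the ideal to which the induction hypothesis is applied: it is real, contains $\FinSetMap{L}$, hence satisfies $n(I:V) < n(I)$, and the identity $I = (I:V) \cap V$ does the rest. This is precisely the paper's recursion, just run top-down instead of along a fixed linear extension.
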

\begin{proof}
  Write $M$ for the number of elements in $\FinSet$ and let $\kappa \colon \{1,\dots,M\} \to \FinSet$, $m\mapsto \kappa_m$ be a bijection as
  in the previous Lemma~\ref{lemma:partial2totalOrdering}, i.e.~whenever $\kappa_m<\kappa_n$ for $m,n\in \{1,\dots,M\}$, then $m<n$.
  Note that this implies $\min \FinSet = \kappa_1$.
  Let $I$ be any real ideal of $\A$.
  For all $m\in \{1,\dots,M\}$ we recursively construct a real ideal $J_m$ of $\A$ in such a way that 
  $\FinSetMap{{\kappa_n}} \in J_m$ for all $n \in \{m+1,\dots,M\}$, so in particular $J_m$ is of type $\kappa_m$:
  
  First note that $M\ge 1$ because $\min \FinSet \in \FinSet$ exists by assumption, and so we define $J_{\kappa(M)} \coloneqq I$.
  Now assume that $J_m$ has been defined for one $m\in \{1,\dots,M\}$ and that
  $\FinSetMap{{\kappa_n}} \in J_m$ holds for all $n \in \{m+1,\dots,M\}$ as required above, so the set $S(\kappa_m,J_m)$ is defined.
  The ideal quotient $J_m : \Vanish{S({\kappa_m},J_m)}$ is a real ideal by Lemma~\ref{lemma:realIdealQuotients},
  and $\FinSetMap{\kappa_n} \in J_m \subseteq J_m : \Vanish{S({\kappa_m},J_m)}$ for all $n \in \{m+1,\dots,M\}$.
  Moreover, $\FinSetMap{\kappa_m}\,\Vanish{S({\kappa_m},J_m)} \subseteq J_m$ by assumption,
  and therefore $\FinSetMap{\kappa_m} \in J_m : \Vanish{S({\kappa_m},J_m)}$ by Lemma~\ref{lemma:realIdealQuotients}.
  So as long as $m>1$ we can define $J_{m-1} \coloneqq J_m : \Vanish{S({\kappa_m},J_m)}$
  and continue the construction with $m-1$ in place of $m$.
  However, when finally $m=1$, then $\kappa_1 = \min \FinSet$, so $\FinSetMap{\kappa_1} = \FinSetMap{\min \FinSet} = \Unit$ by assumption and thus 
  $J_1 = \Vanish{S(\kappa_1,J_1)}$ by \eqref{eq:stepwise}.
  
  We will now show that $\Vanish[\big]{\bigcup_{m=1}^n S(\kappa_m,J_m)} = J_n$ for all $n\in \{1,\dots,M\}$:
  For $n = 1$ we have just seen that
  $\Vanish[\big]{\bigcup_{m=1}^1 S(\kappa_m,J_m)} = \Vanish{S(\kappa_1,J_1)} = J_1$.
  Now assume that $\Vanish[\big]{\bigcup_{m=1}^n S(\kappa_m,J_m)} = J_n$ holds for some $n \in \{1,\dots,M-1\}$.
  Then also
  \begin{align*}
    J_{n+1}
    &=
    J_{n+1} \cap \Vanish[\big]{S({\kappa_{n+1}}, J_{n+1})}
    \\
    &=
    \bigl(J_{n+1} : \Vanish[\big]{S({\kappa_{n+1}}, J_{n+1})}\bigr) \cap \Vanish[\big]{S({\kappa_{n+1}}, J_{n+1})}
    \\
    &=
    J_n \cap \Vanish[\big]{S({\kappa_{n+1}}, J_{n+1})}
    \\
    &=
    \Vanish[\Big]{\bigcup\nolimits_{m=1}^{n+1} S(\kappa_m,J_m)}
  \end{align*}
  holds,
  where we first use that $J_{n+1} \subseteq \Vanish[\big]{S({\kappa_{n+1}}, J_{n+1})}$ by \eqref{eq:stepwise},
  then we apply Lemma~\ref{lemma:realIdealQuotients} and then the definition of $J_n$;
  the last equality holds by the induction assumption $J_n = \Vanish[\big]{\bigcup_{m=1}^n S(\kappa_m,J_m)}$.
  By induction it follows that $\Vanish[\big]{\bigcup_{m=1}^n S(\kappa_m,J_m)} = J_n$ for all $n\in \{1,\dots,M\}$,
  and in particular $\Vanish[\big]{\bigcup_{m=1}^M S(\kappa_m,J_m)} = J_M = I$.
\end{proof}
As discussed in the first part of the proof of Theorem~\ref{theorem}, we have to construct, for every real ideal $I$ of $\UnivStar(\lie g)$,
a subset $S(I)$ of $\variety[\bullet]$ such that $\Vanish{S(I)} = I$. The above Proposition~\ref{proposition:stepwise}
allows us to split this problem into smaller and easier subproblems \eqref{eq:stepwise}, each of which will eventually be solved by application of the classical,
commutative real Nullstellensatz.
Heuristically, the assumption in \eqref{eq:stepwise} that $I$ is of type $K$ means that such a subproblem
is concerned only with the quotient of $\UnivStar(\lie g)$ modulo the smallest real ideal of type $K$,
which corresponds to the ``Zariski closed'' subset $\Zeros[\big]{\set{\FinSetMap{L}}{L \in \FinSet\text{ with } K < L}}$ of $\variety[\bullet]$.
The relaxed condition $\FinSetMap{K} \, \Vanish{S(K,I)} \subseteq I \subseteq \Vanish{S(K,I)}$
then means that such a subproblem has to be solved only in a localization at $\FinSetMap{K}$, i.e.~a further restriction
to the ``relatively Zariski open'' subset $\Zeros[\big]{\set{\FinSetMap{L}}{L \in \FinSet\text{ with } K < L}} \setminus \Zeros[\big]{\{\FinSetMap{K}\}}$.

\subsection{Pfaffians}

Throughout the rest of Section~\ref{sec:theorem} we fix a finite-dimensional $2$-step nilpotent real Lie algebra $\lie g$ and we write 
\begin{equation}
  \lie z \coloneqq \set[\big]{Z \in \lie g}{ \kom{Z}{X} = 0 \text{ for all }X\in \lie g}
\end{equation}
for its center. We choose a basis $C_1, \dots, C_\gamma$ with $\gamma \in \NN_0$ of the center $\lie z$ and extend this
to a basis $B_1, \dots, B_\beta, C_1, \dots, C_\gamma$ with $\beta \in \NN_0$ of whole $\lie g$.

In order to apply the iterative construction from Proposition~\ref{proposition:stepwise} we have to specify a finite partially
ordered set $\FinSet$ and a suitable map $\FinSetMapMap \colon \FinSet \to \UnivStar(\lie g)$. This map will be given by the Pfaffians
of matrices that are built from commutators of basis elements $B_1, \dots, B_\beta$ (see e.g.~\cite[Sec.~7]{godsil:algebraicCombinatorics}):

Recall that the determinant of any antisymmetric matrix $A \in \ring{R}^{2d\times 2d}$, $d\in \NN_0$ over a commutative ring $\ring{R}$
is the square of its Pfaffian, $\det(A) = \pfaffian(A)^2$. This Pfaffian is defined as
\begin{equation}
  \pfaffian(A) \coloneqq \frac{1}{2^d d!} \sum_{\sigma\in\SymmetricGroup_{2d}} \sign(\sigma) \prod_{j=1}^d A_{\sigma(2j-1), \sigma(2j)}
\end{equation}
where $\SymmetricGroup_{2d}$ is the group of permutations of $\{1,\dots,2d\}$ and $\sign \colon \SymmetricGroup_{2d} \to \{-1,+1\}$
describes the sign of a permutation.
Analogously to the Laplace expansion formula, Pfaffians fulfil the identity
\begin{equation}
  \label{eq:pfaffianMatrixExpansion}
  \pfaffian(A) = \sum_{j=1}^{2d-1} (-1)^{j+1} A_{j,2d} \pfaffian\bigl(\widehat{A}_{j,2d}\bigr) \quad\quad\text{if $d>0$}
  ,
\end{equation}
where the antisymmetric matrix $\widehat{A}_{j,2d} \in \ring{R}^{2(d-1)\times 2(d-1)}$ is obtained from the matrix $A$ 
by removing the rows and columns indexed by $j$ and $2d$.

Note that $\genSAlg{\lie z}$, the $^*$\=/subalgebra of $\UnivStar(\lie g)$ that is generated by the center $\lie z$ of $\lie g$,
is a commutative ring.
\begin{definition}
  For $d \in \NN_0$ we define the map $\Pfaffian \colon \lie g^{2d} \to \genSAlg{\lie z}$,
  \begin{equation}
    \label{eq:Pfaffian}
    X_1, \dots, X_{2d}
    \mapsto
    \Pfaffian(X_1,\dots,X_{2d})
    \coloneqq
    \pfaffian\Bigl( \bigl(\I \kom{X_k}{X_\ell}\bigr)_{k,\ell=1}^{2d} \Bigr)
    .
  \end{equation}
  We write $\FinSet$ for the set of all (possibly empty) subsets of $\{1,\dots,\beta\}$ that contain an even number of elements
  and endow $\FinSet$ with the partial order $\subseteq$. For elements $K = \{k_1, \dots, k_{2d}\} \in \FinSet$ 
  with $k_1 < \dots < k_{2d}$ and $d\in \NN_0$ we define the shorthand
  \begin{equation}
    \Pfaffian(K) \coloneqq \Pfaffian\bigl(B_{k_1}, \dots, B_{k_{2d}}\bigr)  \in \genSAlg{\lie z}
    .
  \end{equation}
\end{definition}
The factor $\I$ in \eqref{eq:Pfaffian} assures that $\Pfaffian(X_1,\dots,X_{2d})$
is hermitian for all $X_1,\dots,X_{2d}\in \lie g$, $d\in \NN_0$, because the matrix
$(\I \kom{X_k}{X_\ell})_{k,\ell=1}^{2d}$ has only hermitian entries.
Note that $\FinSet$ contains only sets with an even number of elements. The reason for this is that the Pfaffian and determinant
of an odd-dimensional antisymmetric matrix are zero. The minimum of this partially ordered set $(\FinSet, \subseteq)$ clearly
exists, namely $\min \FinSet = \emptyset$, and it will be important for the proof of Theorem~\ref{theorem} that all of the
following also holds in the trivial case $K = \emptyset \in \FinSet$ where $\Pfaffian(\emptyset) = \Unit$,
with the usual definition that empty sums are zero and empty products are the unit.

It is easy to check that the map $\Pfaffian \colon \lie g^{2d} \to \genSAlg{\lie z}$ is multilinear and that 
\begin{equation}
  \Pfaffian\bigl(X_{\tau(1)},\dots,X_{\tau(2d)}\bigr) = \sign(\tau) \Pfaffian(X_1,\dots,X_{2d})
\end{equation}
for all $\tau \in \SymmetricGroup_{2d}$, $X_1,\dots,X_{2d} \in \lie g$, $d\in \NN$, so $\Pfaffian$ is an alternating multilinear map.
Moreover, the expansion formula \eqref{eq:pfaffianMatrixExpansion} yields
\begin{equation}
  \label{eq:PfaffianExpansion}
  \Pfaffian(X_1, \dots, X_{2d})
  =
  \I
  \sum_{j=1}^{2d-1}
  (-1)^{j+1}
  \Pfaffian\bigl(X_1, \dots, X_{j-1}, X_{j+1}, \dots, X_{2d-1}\bigr)
  \kom{X_j}{X_{2d}}
\end{equation}
for all $X_1,\dots,X_{2d} \in \lie g$, $d\in \NN$.
In order to keep later formulas compact we introduce the following notation:

\begin{definition}
  For any $S \subseteq \NN$ and $k\in \NN$ we write $\sign_S(k) \coloneqq (-1)^{c(S,k)}$
  with $c(S,k)$ defined as the number of elements in $S$ that are strictly smaller than $k$, i.e.~$c(S,k) \coloneqq \sum_{s \in S,s < k}1$.
\end{definition}

\begin{definition}
  Let $K'$ be a subset of $\{1,\dots,\beta\}$ with an odd number of elements, then we define
  \begin{align}
    \Pfaffian^k(K')
    &\coloneqq
    \sign_{K'}(k)\Pfaffian\bigl(K'\setminus\{k\}\bigr)
  \intertext{for $k \in K'$, and}
    \Pfaffian_h(K')
    &\coloneqq
    \begin{cases}
      0 &\text{if }h\in K' \\
      -\sign_{K'}(h)\Pfaffian\bigl(K' \cup \{h\}\bigr) &\text{if }h\notin K'
    \end{cases}
  \end{align}
  for $h \in \{1,\dots,\beta\}$.
\end{definition}

\begin{proposition} \label{proposition:PfaffianMatrixVectorOdd}
  Let $K'$ be a subset of $\{1,\dots,\beta\}$ with an odd number of elements. Then the identity
  \begin{equation} \label{eq:PfaffianMatrixVectorOdd}
    {\sum_{k \in K'} \Pfaffian^k(K') \kom{B_k}{B_h}}
    =
    -\I \Pfaffian_h(K')
  \end{equation}
  holds for all $h \in \{1,\dots,\beta\}$.
\end{proposition}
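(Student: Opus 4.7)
The plan is to recognize the left-hand side of \eqref{eq:PfaffianMatrixVectorOdd} as (essentially) the Pfaffian expansion formula \eqref{eq:PfaffianExpansion} applied to a well-chosen sequence of vectors. Writing $K' = \{k_1 < \dots < k_{2d-1}\}$, I would apply \eqref{eq:PfaffianExpansion} with $X_j = B_{k_j}$ for $1 \le j \le 2d-1$ and $X_{2d} = B_h$. Since the Pfaffians appearing on the right of the expansion already have sorted arguments and since $c(K', k_j) = j-1$ gives $(-1)^{j+1} = \sign_{K'}(k_j)$, the expansion rewrites as
\[
\Pfaffian(B_{k_1}, \dots, B_{k_{2d-1}}, B_h) = \I \sum_{k \in K'} \sign_{K'}(k) \Pfaffian\bigl(K' \setminus \{k\}\bigr) \kom{B_k}{B_h}.
\]
Thus the claim reduces to computing the left-hand side as $-\sign_{K'}(h) \Pfaffian(K' \cup \{h\})$ when $h \notin K'$, and as $0$ when $h \in K'$.

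In the case $h \in K'$, the argument sequence $(B_{k_1}, \dots, B_{k_{2d-1}}, B_h)$ has two coincident entries, so the Pfaffian vanishes by the alternating multilinearity of $\Pfaffian$ (already noted right after \eqref{eq:Pfaffian}). On the other hand $\Pfaffian_h(K') = 0$ by definition whenever $h \in K'$, so both sides of \eqref{eq:PfaffianMatrixVectorOdd} equal zero, and there is nothing further to prove.

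In the case $h \notin K'$, let $j_0$ denote the position of $h$ in the sorted set $K' \cup \{h\}$; then $c(K', h) = j_0 - 1$ and hence $\sign_{K'}(h) = (-1)^{j_0 - 1}$. Moving $B_h$ from the final position to position $j_0$ is a cyclic permutation of length $2d - j_0 + 1$, contributing the sign $(-1)^{2d - j_0} = (-1)^{j_0} = -\sign_{K'}(h)$, so the alternating property yields $\Pfaffian(B_{k_1}, \dots, B_{k_{2d-1}}, B_h) = -\sign_{K'}(h) \Pfaffian(K' \cup \{h\})$. Dividing the expansion by $\I$ (and using $1/\I = -\I$ together with the definition $\Pfaffian_h(K') = -\sign_{K'}(h) \Pfaffian(K' \cup \{h\})$) then gives exactly $-\I \Pfaffian_h(K')$, as required.

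I expect the only genuine obstacle to be the sign bookkeeping, in particular tracking the cyclic permutation sign against $\sign_{K'}(h)$; the argument is otherwise a direct application of \eqref{eq:PfaffianExpansion} and the alternating multilinear nature of $\Pfaffian$, with no deeper input needed.
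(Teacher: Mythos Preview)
Your proposal is correct and follows essentially the same route as the paper: both recognize \eqref{eq:PfaffianMatrixVectorOdd} as the expansion formula \eqref{eq:PfaffianExpansion} applied to $(B_{k_1},\dots,B_{k_{2d-1}},B_h)$, use $\sign_{K'}(k_j)=(-1)^{j+1}$, and invoke the alternating property of $\Pfaffian$ to identify $\Pfaffian(B_{k_1},\dots,B_{k_{2d-1}},B_h)$ with $\Pfaffian_h(K')$. Your version simply makes the sign bookkeeping for the $h\notin K'$ case explicit, whereas the paper leaves that verification implicit.
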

\begin{proof}
  Write $\{k_1, \dots, k_{2d-1}\} \coloneqq K'$ with $d\in \NN$ and $k_1 < \dots < k_{2d-1}$. Then $\sign_{K'}(k_j) = (-1)^{j+1}$
  for all $j\in \{1,\dots,2d-1\}$, and $\Pfaffian_h(K') = \Pfaffian(B_{k_1}, \dots, B_{k_{2d-1}}, B_h)$ for all $h\in \{1,\dots,\beta\}$
  because $\Pfaffian \colon \lie g^{2d} \to \genSAlg{\lie z}$ is alternating. So \eqref{eq:PfaffianMatrixVectorOdd}
  is just a reformulation of the expansion formula \eqref{eq:PfaffianExpansion}.
\end{proof}

\subsection{Invariant and dual elements} \label{sec:invariantAndDual}

We can now construct, for every $K \in \FinSet$, two special types of well-behaved elements of $\UnivStar(\lie g)$,
namely the \emph{invariant} elements $\Invariant_{K,m}$ for $m\in \{1,\dots,\beta\} \setminus K$ and the \emph{dual} elements $\Dual_{K,\ell}$
for $\ell \in K$:

\begin{definition} \label{definition:generator}
  For $K \in \FinSet$ and $m \in \{1,\dots, \beta\} \setminus K$ we define the element $\Invariant_{K,m} \in \UnivStar(\lie g)$ as
  \begin{align}
    \label{eq:generator:gen}
    \Invariant_{K,m}
    &\coloneqq
    \sign_K(m)\sum_{k\in K \cup \{m\}} \Pfaffian^k\bigl(K\cup \{m\}\bigr)\, B_k
    .
  \intertext{Similarly, for $K \in \FinSet$ and $\ell\in K$ we define the element $\Dual_{K,\ell} \in \UnivStar(\lie g)$ as}
    \Dual_{K,\ell}
    &\coloneqq
    \sign_K(\ell)\sum_{k \in K \setminus \{\ell\}}
    \Pfaffian^k\bigl(K\setminus \{\ell\}\bigr)\,B_k
    .
  \end{align}
\end{definition}
Note that these elements $\Invariant_{K,m}$ and $\Dual_{K,\ell}$ of $\UnivStar(\lie g)$ in general are not elements of $\lie g$,
but they are again antihermitian. We will often use the equivalent formula
\begin{equation}
  \Invariant_{K,m}
  =
  \Pfaffian(K) \,B_k
  +
  \sign_K(m)\sum_{k\in K} \Pfaffian^k\bigl(K\cup \{m\}\bigr)\, B_k
\end{equation}
for $K \in \FinSet$ and $m \in \{1,\dots, \beta\} \setminus K$.

\begin{proposition} \label{proposition:commutators}
  Assume $K \in \FinSet$. Then
  \begin{align}
    \label{eq:commutators:invariant}
    {\kom[\big]{\Invariant_{K,m}}{B_h}} &= -\I \sign_K(m) \Pfaffian_h\bigl(K\cup \{m\}\bigr)
  \intertext{for all $m\in \{1,\dots,\beta\} \setminus K$ and all $h\in \{1,\dots,\beta\}$, and}
    \label{eq:commutators:mixed}
    {\kom[\big]{\Invariant_{K,m}}{\Dual_{K,\ell}}} &= 0
  \intertext{for all $m\in \{1,\dots,\beta\} \setminus K$ and all $\ell \in K$. Moreover,}
    \label{eq:commutators:dual}
    {\kom[\big]{\Dual_{K,\ell}}{B_{\ell'}}} &= \I \Kronecker_{\ell,\ell'} \Pfaffian(K)
  \end{align}
  for all $\ell,\ell' \in K$, where $\Kronecker_{\ell,\ell'}$ is the Kronecker-$\delta$.
\end{proposition}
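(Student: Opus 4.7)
The plan is to reduce all three identities to Proposition~\ref{proposition:PfaffianMatrixVectorOdd} by exploiting that the Pfaffians $\Pfaffian(\cdot)$ lie in $\genSAlg{\lie z}$, hence are central in $\UnivStar(\lie g)$. This means that in expressions like $\kom{\Invariant_{K,m}}{B_h}$ the Pfaffian coefficients $\Pfaffian^k(K \cup \{m\})$ factor out of every commutator.

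For \eqref{eq:commutators:invariant} I would substitute the defining expression for $\Invariant_{K,m}$, pull the central Pfaffians out of the bracket, and observe that $K \cup \{m\}$ has an odd number of elements (since $K \in \FinSet$ and $m \notin K$). This is exactly the setup of Proposition~\ref{proposition:PfaffianMatrixVectorOdd} with $K' = K \cup \{m\}$ and that proposition immediately yields
\begin{equation*}
  \sum_{k \in K \cup \{m\}} \Pfaffian^k(K \cup \{m\})\,\kom{B_k}{B_h} = -\I\, \Pfaffian_h(K\cup\{m\}),
\end{equation*}
so multiplying by $\sign_K(m)$ gives \eqref{eq:commutators:invariant}. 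For \eqref{eq:commutators:dual} the same strategy applies with $K' = K \setminus \{\ell\}$, which again has odd cardinality. Proposition~\ref{proposition:PfaffianMatrixVectorOdd} produces $-\I\,\Pfaffian_{\ell'}(K \setminus \{\ell\})$. If $\ell' \in K\setminus\{\ell\}$ then $\Pfaffian_{\ell'}(K\setminus\{\ell\}) = 0$ by definition, so the bracket vanishes; if $\ell' = \ell$ then $\ell \notin K\setminus\{\ell\}$ and the definition yields $-\sign_{K\setminus\{\ell\}}(\ell)\Pfaffian(K)$. Since the elements of $K$ strictly less than $\ell$ coincide with those of $K \setminus \{\ell\}$ strictly less than $\ell$, we have $\sign_{K\setminus\{\ell\}}(\ell) = \sign_K(\ell)$, and the two factors of $\sign_K(\ell)$ combine to $+1$, producing $\I \Pfaffian(K)$ as required.

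The mixed commutator \eqref{eq:commutators:mixed} then follows essentially for free once \eqref{eq:commutators:invariant} is established. I would expand
\begin{equation*}
  \kom[\big]{\Invariant_{K,m}}{\Dual_{K,\ell}}
  = \sign_K(\ell) \sum_{k \in K \setminus \{\ell\}} \Pfaffian^k(K\setminus\{\ell\})\,\kom[\big]{\Invariant_{K,m}}{B_k},
\end{equation*}
using centrality of the Pfaffian factors, and substitute \eqref{eq:commutators:invariant}. Every resulting term contains a factor $\Pfaffian_k(K \cup \{m\})$ with $k \in K \setminus\{\ell\} \subseteq K \cup \{m\}$, which is zero by the definition of $\Pfaffian_h(\cdot)$. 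The entire sum collapses to zero.

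The only real obstacle is sign bookkeeping, in particular verifying $\sign_{K\setminus\{\ell\}}(\ell) = \sign_K(\ell)$ and that the two copies of $\sign_K(\ell)$ in the $\ell' = \ell$ case of \eqref{eq:commutators:dual} cancel; everything else is a direct substitution of Proposition~\ref{proposition:PfaffianMatrixVectorOdd} combined with the centrality of $\genSAlg{\lie z}$ inside $\UnivStar(\lie g)$.
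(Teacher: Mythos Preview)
Your proposal is correct and follows essentially the same approach as the paper: both proofs reduce all three identities to Proposition~\ref{proposition:PfaffianMatrixVectorOdd} via the centrality of the Pfaffian coefficients, and both handle the sign bookkeeping in~\eqref{eq:commutators:dual} via $\sign_{K\setminus\{\ell\}}(\ell)=\sign_K(\ell)$. The only difference is cosmetic ordering---the paper deduces~\eqref{eq:commutators:mixed} immediately after~\eqref{eq:commutators:invariant} by specializing $h=k\in K$, whereas you postpone it until after~\eqref{eq:commutators:dual}---but the argument is identical.
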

\begin{proof}
  Given $m\in \{1,\dots,\beta\} \setminus K$ and $h\in \{1,\dots,\beta\}$, then
  \begin{equation*}
    \kom[\big]{\Invariant_{K,m}}{B_h}
    =
    \sign_K(m) \sum_{k\in K \cup \{m\}} \Pfaffian^k\bigl(K\cup \{m\}\bigr) \kom{B_k}{B_h}
    =
    -\I \sign_K(m) \Pfaffian_h\bigl(K\cup \{m\}\bigr)
  \end{equation*}
  where we first use that $\Pfaffian^k(K\cup \{m\})$ is central in $\UnivStar(\lie g)$ and then we apply Proposition~\ref{proposition:PfaffianMatrixVectorOdd}.
  For $h \coloneqq k\in K$ this shows that $\kom[\big]{\Invariant_{K,m}}{B_k} = 0$ because $\Pfaffian_k\bigl(K\cup \{m\}\bigr) = 0$.
  It follows that $\kom[\big]{\Invariant_{K,m}}{\Dual_{K,\ell}} = 0$ for all $m\in \{1,\dots,\beta\} \setminus K$, $\ell \in K$
  because the prefactors $\Pfaffian^k(K\setminus \{\ell\})$ appearing in $\Dual_{K,\ell}$ are central anyways.
  A similar argument shows that, for arbitrary $\ell,\ell' \in K$,
  \begin{equation*}
    \kom[\big]{\Dual_{K,\ell}}{B_{\ell'}}
    =
    \sign_K(\ell)\sum_{k \in K \setminus \{\ell\}} \Pfaffian^k\bigl(K\setminus \{\ell\}\bigr) \kom{B_k}{B_{\ell'}}
    =
    -\I \sign_K(\ell) \Pfaffian_{\ell'}\bigl(K\setminus \{\ell\}\bigr)
    =
    \I \Kronecker_{\ell,\ell'} \Pfaffian(K)
    .
  \end{equation*}  
  Here we use in the last step for $\ell\neq \ell'$ that $\Pfaffian_{\ell'}\bigl(K\setminus \{\ell\}\bigr) = 0$,
  and if $\ell= \ell'$, then $\Pfaffian_{\ell'}\bigl(K\setminus \{\ell\}\bigr) = -\sign_{K \setminus\{\ell\}}(\ell') \Pfaffian(K)
  = -\sign_K(\ell) \Pfaffian(K)$.
\end{proof}
For $K \in \FinSet$, $m\in \{1,\dots,\beta\} \setminus K$ and $h \in \{1,\dots,\beta\}$
identity \eqref{eq:commutators:invariant} shows that $\kom[\big]{\Invariant_{K,m}}{B_h}$
is contained in every ideal $I$ of $\UnivStar(\lie g)$ that fulfils $\Pfaffian(L) \in I$
for all $L \in \FinSet$ with $K \subsetneq L$. It is in this way that $\Invariant_{K,m}$ is ``invariant''.

\subsection{An expansion over invariant elements}

\begin{definition} \label{definition:UnivInv}
  Assume $K \in \FinSet$, then we define $\UnivStar(\lie g)_\subKnachKlammer^\inv$ as the unital
  $^*$\=/subalgebra of $\UnivStar(\lie g)$ that is generated by all the elements $\Invariant_{K, m}$ with $m \in \{1,\dots, \beta\} \setminus K$
  together with the center of $\lie g$, i.e.
  \begin{equation}
    \label{eq:generator:0}
    \UnivStar(\lie g)_\subKnachKlammer^\inv \coloneqq \genSAlg[\big]{\set[\big]{\Invariant_{K,m}}{m \in \{1,\dots, \beta\} \setminus K} \cup \{C_1, \dots, C_\gamma\}}
    .
  \end{equation}
\end{definition}
These algebras $\UnivStar(\lie g)_K^\inv$ are the main technical object of the paper.
They enable the passage from the commutative real Nullstellensatz (Lemma~\ref{lemma:commutativeNullstellensatz}) to the
noncommutative real Nullstellenatz (Theorem~\ref{theorem}): Lemma~\ref{lemma:commutativeNullstellensatz}
applies to the restriction $\UnivStar(\lie g)_K^\inv \cap I$ of certain well-behaved real ideals $I$, and in 
Proposition~\ref{proposition:constructionOfRepFromFunctional} we will show that many of the resulting $^*$\=/characters
of $\UnivStar(\lie g)_K^\inv$ can be extended to filtered $^*$\=/homomorphisms from $\UnivStar(\lie g)$ to a Weyl algebra, whose kernels contain $I$.
Heuristically, the former requires that $\UnivStar(\lie g)_K^\inv$ has to be sufficiently small so that it does not
contain elements whose commutators lie outside of $I$, while the latter means that $\UnivStar(\lie g)_K^\inv$ has to be
sufficiently large so that $\UnivStar(\lie g)_K^\inv \cap I$ retains enough information about $I$.
More precisely, in Propositions~\ref{proposition:commutatortrick} we will show that all elements of any ideal of $\UnivStar(\lie g)$ essentially
allow an expansion as a finite series of powers in $B_k$, $k\in K$ with prefactors from $\UnivStar(\lie g)_\subKnachKlammer^\inv$.
An instructive example of $\UnivStar(\lie g)^\inv_K$ will be given in Section~\ref{sec:examples:nonparallel}.

\begin{definition}
  We introduce a multiindex notation for powers in $\UnivStar(\lie g)$ of the basis elements $B_k$, $k\in K$:
  Consider $K = \{k_1, \dots, k_{2d}\} \in \FinSet$ with $d\in \NN_0$ and $k_1 < \dots < k_{2d}$ and any multiindex 
  $\alpha \in \NN_0^{2d}$, then we define
  \begin{equation}
    B_\subKnachKlammer^\alpha \coloneqq (B_{k_1})^{\alpha_1} \dots (B_{k_{2d}})^{\alpha_{2d}} \in \UnivStar(\lie g)
    .
  \end{equation}
  For any such multiindex $\alpha \in \NN_0^{2d}$ we also set
  \begin{equation}
    \abs{\alpha} \coloneqq \sum_{j=1}^{2d} \alpha_j \in \NN_0
    \quad\quad\text{and}\quad\quad
    \alpha! \coloneqq \prod_{j=1}^{2d} \alpha_j! \in \NN
  \end{equation}
  as usual, and we define the elementwise partial order on multiindices,
  i.e.~$\alpha \le \alpha'$ for $\alpha,\alpha' \in \NN_0^{2d}$ means that $\alpha_j \le \alpha_j'$ for all $j \in \{1,\dots,2d\}$.
\end{definition}

\begin{lemma} \label{lemma:dual}
  Assume $K = \{k_1,\dots,k_{2d}\} \in \FinSet$ with $d \in \NN_0$ and $k_1< \dots < k_{2d}$.
  For $\alpha \in \NN_0^{2d}$ define
  \begin{equation}
    \ad_D^{\alpha} \coloneqq \bigl(\ad_{D,1}\bigr)^{\alpha_1} \dots \bigl(\ad_{D,2d}\bigr)^{\alpha_{2d}} \colon \UnivStar(\lie g) \to \UnivStar(\lie g)
  \end{equation}
  where $\ad_{D,j}$ with $j \in \{1,\dots,2d\}$ is the inner derivation $\ad_{D,j} \colon \UnivStar(\lie g) \to \UnivStar(\lie g)$,
  \begin{equation}
    a \mapsto \ad_{D,j}(a) \coloneqq {\kom[\big]{\Dual_{K,k_j}}{a}}
    .
  \end{equation}
  Then the iterated inner derivations $\ad_D^{\alpha'}$ with ${\alpha'} \in \NN_0^{2d}$ fulfil the equations
  \begin{alignat}{2}
    \label{eq:dual:inv}
    \ad_D^{{\alpha'}}(p) &= 0 &&\text{for all $p\in \UnivStar(\lie g)_\subKnachKlammer^\inv$}
  \shortintertext{and}
    \label{eq:dual:B}
    \ad_D^{{\alpha'}}\bigl( B_\subKnachKlammer^{\alpha}\bigr)
    &=
    \begin{cases}
      \frac{\left(\I \Pfaffian(K)\right)^{\abs{{\alpha'}}} \alpha! }{(\alpha-{\alpha'})!} B_\subKnachKlammer^{\alpha-{\alpha'}} & \text{if ${\alpha'} \le \alpha$} \\
      0 & \text{otherwise}
    \end{cases}
    &\quad\quad\quad&\text{for all $\alpha \in \NN_0^{2d}$.}
  \end{alignat}
\end{lemma}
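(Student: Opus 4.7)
The plan is to verify both equations via the Leibniz rule, reducing everything to the single-step commutator identities from Proposition~\ref{proposition:commutators} and the centrality of $\Pfaffian(K) \in \genSAlg{\lie z}$.

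For \eqref{eq:dual:inv} (intended for $\alpha' \neq 0$, since $\ad_D^0 = \mathrm{id}$), I would first show that each $\ad_{D,j}$ annihilates every generator of $\UnivStar(\lie g)_\subKnachKlammer^\inv$: by \eqref{eq:commutators:mixed}, $\kom{\Dual_{K,k_j}}{\Invariant_{K,m}} = 0$ for all $m \in \{1,\dots,\beta\} \setminus K$, and $\ad_{D,j}(C_i) = 0$ since each $C_i$ is central. As $\ad_{D,j}$ is a derivation, the Leibniz rule propagates this vanishing to the whole $^*$-subalgebra $\UnivStar(\lie g)_\subKnachKlammer^\inv$. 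Iterating then yields $\ad_D^{\alpha'}(p) = 0$ whenever $|\alpha'| \ge 1$.

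For \eqref{eq:dual:B}, the crux is the single-step identity
\[
\ad_{D,j}\bigl(B_\subKnachKlammer^\beta\bigr) \;=\; \beta_j\, \I\, \Pfaffian(K)\, B_\subKnachKlammer^{\beta - e_j}
\]
for arbitrary $\beta \in \NN_0^{2d}$ (where $e_j$ denotes the $j$-th standard basis vector and the right-hand side is understood as $0$ if $\beta_j = 0$). This follows by applying the Leibniz rule to the ordered product $B_\subKnachKlammer^\beta = B_{k_1}^{\beta_1}\cdots B_{k_{2d}}^{\beta_{2d}}$ together with \eqref{eq:commutators:dual}: only the $j$-th factor contributes since $\kom{\Dual_{K,k_j}}{B_{k_i}} = \I \Kronecker_{i,j} \Pfaffian(K)$, and within that factor one picks up $\beta_j$ copies of the central element $\I \Pfaffian(K)$, which collect as a single prefactor. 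Equation \eqref{eq:dual:B} then follows by iterating: reading the composition $\ad_D^{\alpha'} = \ad_{D,1}^{\alpha_1'} \cdots \ad_{D,2d}^{\alpha_{2d}'}$ from right to left, each block $\ad_{D,j}^{\alpha_j'}$ contributes a factor $(\I \Pfaffian(K))^{\alpha_j'} \alpha_j! / (\alpha_j - \alpha_j')!$ and lowers the $j$-th exponent by $\alpha_j'$ (or produces $0$ as soon as $\alpha_j' > \alpha_j$). Crucially, the central prefactors accumulated along the way lie in $\genSAlg{\lie z} \subseteq \UnivStar(\lie g)_\subKnachKlammer^\inv$ and are therefore annihilated by subsequent $\ad_{D,j'}$'s via \eqref{eq:dual:inv}, so they pass through the remaining iterations unchanged; multiplying all factors yields the claimed formula.

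The whole argument is routine once the single-step identities are in hand—no genuine obstacle, only bookkeeping of the multi-index iteration and of where the central prefactors sit. As a consistency remark, the order in the composition defining $\ad_D^{\alpha'}$ is immaterial, since $\kom{\Dual_{K,k_j}}{\Dual_{K,k_{j'}}}$ is central (because $\kom{B_k}{B_{k'}} \in \lie z$ for $k,k' \in K$ by $2$-step nilpotency), hence the derivations $\ad_{D,j}$ mutually commute on $\UnivStar(\lie g)$.
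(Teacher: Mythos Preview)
Your proposal is correct and follows essentially the same approach as the paper's own proof: verify the single-step identities $\ad_{D,j}(\Invariant_{K,m}) = 0$, $\ad_{D,j}(C_i) = 0$, and $\ad_{D,j}(B_\subKnachKlammer^\beta) = \I\,\beta_j\,\Pfaffian(K)\,B_\subKnachKlammer^{\beta - e_j}$ from Proposition~\ref{proposition:commutators}, then iterate via the Leibniz rule. If anything you are slightly more careful than the paper, in that you flag the trivial caveat $\alpha' \neq 0$ in \eqref{eq:dual:inv} and make explicit why the accumulated central prefactors $\Pfaffian(K)$ pass through subsequent derivations unchanged.
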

\begin{proof}
  Consider $j \in \{1,\dots,2d\}$, then $\ad_{D,j}(\Invariant_{K,m}) = {\kom{\Dual_{K,k_j}}{\Invariant_{K,m}}} = 0$ for all $m\in \{1,\dots,\beta\} \setminus K$
  by identity \eqref{eq:commutators:mixed} of Proposition~\ref{proposition:commutators}, and $\ad_{D,j}(C_{j'}) = 0$ for all $j' \in \{1,\dots,\gamma\}$ because $C_{j'}$
  is central. This shows that equation~\eqref{eq:dual:inv} is fulfilled for all ${\alpha'} \in \NN_0^{2d}$ and all $p\in \UnivStar(\lie g)_\subKnachKlammer^\inv$.
  
  Now write $e_i \in \NN_0^{2d}$ with $i \in \{1,\dots,2d\}$ for the multiindex given by $(e_i)_i \coloneqq 1$ and $(e_i)_{i'} \coloneqq 0$ for $i' \in \{1,\dots,2d\} \setminus \{i\}$.
  For $j\in \{1,\dots,2d\}$ identity \eqref{eq:commutators:dual} of Proposition~\ref{proposition:commutators} shows that
  $\ad_{D,j}\bigl(B_\subKnachKlammer^{\alpha}\bigr) = \I \alpha_j \Pfaffian(K) B_\subKnachKlammer^{\alpha-e_j}$
  for all $\alpha \in \NN_0^{2d}$ with $\alpha_j>0$, and similarly, $\ad_{D,j}\bigl(B_\subKnachKlammer^{\alpha}\bigr) = 0$
  for all $\alpha \in \NN_0^{2d}$ with $\alpha_j=0$. From this it follows that equation~\eqref{eq:dual:B} is fulfilled for all
  ${\alpha'},\alpha \in \NN_0^{2d}$ because $\prod_{i = 0}^{\alpha'_j-1} (\alpha_j-i) = \alpha_j! \quotient (\alpha_j- \alpha'_j)!$
  holds for all ${\alpha'},\alpha \in \NN_0^{2d}$ with ${\alpha'} \le \alpha$ and all $j\in \{1,\dots,2d\}$.
\end{proof}

\begin{lemma} \label{lemma:univstarinfty}
  Assume $K = \{k_1,\dots,k_{2d}\} \in \FinSet$ with $d \in \NN_0$ and $k_1< \dots < k_{2d}$.
  Then for all $a \in \UnivStar(\lie g)$ there are $r,s \in \NN_0$ and coefficients $p_\alpha \in \UnivStar(\lie g)_\subKnachKlammer^\inv$
  for $\alpha \in \NN_0^{2d}$ with $\abs{\alpha} \le s$ such that
  \begin{equation}
    \Pfaffian(K)^r a = \sum_{\alpha \in \NN_0^{2d}, \abs{\alpha} \le s} p_\alpha B_\subKnachKlammer^\alpha .
  \end{equation}
\end{lemma}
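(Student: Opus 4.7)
The plan is to show that the set $V \subseteq \UnivStar(\lie g)$ of elements $a$ admitting a representation as in the statement is a subalgebra of $\UnivStar(\lie g)$ containing every generator of $\lie g$, hence equals $\UnivStar(\lie g)$. The crucial structural observation, without which the argument does not work, is that $B_k$ commutes with every element of $\UnivStar(\lie g)_\subKnachKlammer^\inv$ for each $k \in K$. Since $\kom{B_k}{\argument}$ is a derivation, it suffices to verify this on the generators of $\UnivStar(\lie g)_\subKnachKlammer^\inv$: the $C_j$ are central, and $\kom[\big]{B_k}{\Invariant_{K,m}} = \I \sign_K(m) \Pfaffian_k(K \cup \{m\}) = 0$ by \eqref{eq:commutators:invariant} together with the defining equation $\Pfaffian_k(K \cup \{m\}) = 0$ for $k \in K \cup \{m\}$. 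Consequently the monomials $B_\subKnachKlammer^\alpha$ commute with $\UnivStar(\lie g)_\subKnachKlammer^\inv$ for every multiindex $\alpha$, while $\Pfaffian(K) \in \genSAlg{\lie z} \subseteq \UnivStar(\lie g)_\subKnachKlammer^\inv$ is central in all of $\UnivStar(\lie g)$.

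With these facts, closure of $V$ under addition is immediate by bringing both witnesses to a common power of $\Pfaffian(K)$. Closure under multiplication is only slightly less direct: given $\Pfaffian(K)^{r_1} a = \sum_\alpha p_\alpha B_\subKnachKlammer^\alpha$ and $\Pfaffian(K)^{r_2} b = \sum_\beta q_\beta B_\subKnachKlammer^\beta$, the centrality of $\Pfaffian(K)$ gives
\begin{equation*}
  \Pfaffian(K)^{r_1+r_2} ab = \sum_{\alpha,\beta} p_\alpha B_\subKnachKlammer^\alpha q_\beta B_\subKnachKlammer^\beta = \sum_{\alpha,\beta} p_\alpha q_\beta B_\subKnachKlammer^\alpha B_\subKnachKlammer^\beta,
\end{equation*}
where the second identity uses $B_\subKnachKlammer^\alpha q_\beta = q_\beta B_\subKnachKlammer^\alpha$. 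The product $B_\subKnachKlammer^\alpha B_\subKnachKlammer^\beta$ can in turn be reordered into $B_\subKnachKlammer^{\alpha+\beta}$ modulo finitely many lower-order monomials $B_\subKnachKlammer^\gamma$ whose coefficients lie in $\genSAlg{\lie z} \subseteq \UnivStar(\lie g)_\subKnachKlammer^\inv$, because $\kom{B_k}{B_{k'}} \in \lie z$ for all $k, k' \in K$. Collecting terms expresses $\Pfaffian(K)^{r_1+r_2} ab$ as $\sum_\gamma t_\gamma B_\subKnachKlammer^\gamma$ with $t_\gamma \in \UnivStar(\lie g)_\subKnachKlammer^\inv$, so $ab \in V$.

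It remains to verify that the generators of $\UnivStar(\lie g)$ lie in $V$. The unit $\Unit$, the central generators $C_1, \dots, C_\gamma$, and the basis elements $B_k$ with $k \in K$ are already in the required form with $r=0$. For $m \in \{1, \dots, \beta\} \setminus K$, unfolding Definition~\ref{definition:generator} and using $\Pfaffian^m(K \cup \{m\}) = \sign_K(m) \Pfaffian(K)$ (which follows from $\sign_{K\cup\{m\}}(m) = \sign_K(m)$) yields the key identity
\begin{equation*}
  \Pfaffian(K) B_m = \Invariant_{K,m} - \sign_K(m) \sum_{k \in K} \Pfaffian^k(K \cup \{m\}) B_k,
\end{equation*}
so $B_m \in V$ with $r = s = 1$. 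Since $V$ is then a subalgebra of $\UnivStar(\lie g)$ containing $B_1, \dots, B_\beta, C_1, \dots, C_\gamma$, we conclude $V = \UnivStar(\lie g)$. The main obstacle is really only spotting the commutation of $B_k$ ($k \in K$) with $\UnivStar(\lie g)_\subKnachKlammer^\inv$; once that is in hand, no induction on degree is required and the argument is purely structural.
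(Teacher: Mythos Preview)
Your proof is correct and follows essentially the same approach as the paper's: both hinge on the commutation of $B_k$ ($k\in K$) with $\UnivStar(\lie g)_\subKnachKlammer^\inv$, the identity $\Pfaffian(K)B_m = \Invariant_{K,m} - \sign_K(m)\sum_{k\in K}\Pfaffian^k(K\cup\{m\})B_k$, and reordering products $B_\subKnachKlammer^\alpha B_\subKnachKlammer^\beta$ with central correction terms. The only difference is packaging---the paper works with an explicit filtration $\UnivStar(\lie g)_\subKnachKlammer^s$ and shows products step up the filtration, whereas you abstract this into ``$V$ is a subalgebra containing the generators''---but the substantive steps coincide.
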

The number $s$ appearing in this formula guarantees that the sum is finite.
\begin{proof}
  For $s \in \NN_0$ we write
  \begin{equation*}
    \UnivStar(\lie g)_\subKnachKlammer^s
    \coloneqq
    \set[\Big]{
      \sum\nolimits_{\alpha \in \NN_0^{2d}, \abs{\alpha} \le s} p_\alpha B_\subKnachKlammer^\alpha
    }{
      p_\alpha \in \UnivStar(\lie g)_\subKnachKlammer^\inv\textup{ for all }\alpha \in \NN_0^{2d}\textup{ with }\abs{\alpha} \le s
    }
    .
  \end{equation*}
  Clearly $\UnivStar(\lie g)_\subKnachKlammer^0 = \UnivStar(\lie g)_\subKnachKlammer^\inv$.
  Consider any $s \in \NN_0$ and an arbitrary element $a = \sum_{\alpha \in \NN_0^{2d}, \abs{\alpha} \le s} p_\alpha B_\subKnachKlammer^\alpha \in \UnivStar(\lie g)_\subKnachKlammer^s$.
  Then $a q = \sum_{\alpha \in \NN_0^{2d}, \abs{\alpha} \le s} p_\alpha q B_\subKnachKlammer^\alpha \in \UnivStar(\lie g)_\subKnachKlammer^s$ for all $q \in \UnivStar(\lie g)_\subKnachKlammer^\inv$ because
  $\kom{B_{k_j}}{C_{j'}} = 0$ for all $j,j' \in \{1,\dots,2d\}$ anyways and because
  $\kom{B_{k_j}}{\Invariant_{K,m}} = \I \sign_K(m) \Pfaffian_{k_j}\bigl(K\cup \{m\}\bigr) = 0$ for all $j\in \{1,\dots,2d\}$
  and all $m\in \{1,\dots, \beta\} \setminus K$ by Proposition~\ref{proposition:commutators}
  and Definition~\ref{definition:generator}.
  Moreover, $a B_{k_j} \in \UnivStar(\lie g)_\subKnachKlammer^{s+1}$ for all $j\in \{1,\dots,2d\}$:
  indeed, write $e_i \in \NN_0^{2d}$ with $i \in \{1,\dots,2d\}$ for the multiindex given by $(e_i)_i \coloneqq 1$ and 
  $(e_i)_{i'} \coloneqq 0$ for $i' \in \{1,\dots,2d\} \setminus \{i\}$, and consider any $\alpha \in \NN_0^{2d}$ with $\abs{\alpha} \le s$,
  then
  \begin{align*}
    p_\alpha B_\subKnachKlammer^\alpha B_{k_j}
    &=
    p_\alpha B_\subKnachKlammer^{\alpha+e_j} + p_\alpha (B_{k_1})^{\alpha_1} \dots (B_{k_j})^{\alpha_j} \kom[\big]{(B_{k_{j+1}})^{\alpha_{j+1}} \dots (B_{k_{2d}})^{\alpha_{2d}}}{B_{k_j}}
    \\
    &=
    p_\alpha B_{K}^{\alpha+e_j} + \sum_{\substack{i=j+1 \\ \alpha_i > 0}}^{2d} \alpha_i p_\alpha \kom{B_{k_i}}{B_{k_j}} B_{K}^{\alpha-e_i} 
    \in \UnivStar(\lie g)_\subKnachKlammer^{s+1}
  \end{align*}
  because $\kom{B_{k_i}}{B_{k_j}} \in \lie z \subseteq \UnivStar(\lie g)_\subKnachKlammer^\inv$ for all $i \in \{j+1,\dots,2d\}$.
  It follows that $ab \in \UnivStar(\lie g)_\subKnachKlammer^{s+1}$ for all $b\in \UnivStar(\lie g)_\subKnachKlammer^1$.
  
  Now consider any element $a \in \UnivStar(\lie g)$. In order to complete the proof we have to find $r,s\in \NN_0$ such that
  $\Pfaffian(K)^r a \in \UnivStar(\lie g)_\subKnachKlammer^s$. So note that the identity
  \begin{equation*}
    \Pfaffian(K)\,B_m
    =
    \sign_K(m) \Pfaffian^m\bigl( K \cup \{m\} \bigr)\, B_m
    =
    \Invariant_{K,m} - \sign_K(m) \sum_{k\in K} \Pfaffian^k\bigl(K\cup \{m\}\bigr) \,B_k
    \in
    \UnivStar(\lie g)_\subKnachKlammer^1
  \end{equation*}
  holds for all $m\in \{1,\dots,\beta\} \setminus K$ by the definitions of $\Invariant_{K,m}$ and because $\sign_K(m) \Pfaffian^m\bigl( K \cup \{m\} \bigr)
  = \sign_K(m) \sign_{K\cup \{m\}}(m)  \Pfaffian(K) = \Pfaffian(K)$.  
  The element $a \in \UnivStar(\lie g)$ can now be expanded as a linear combination of products of the basis elements
  $B_1, \dots, B_\beta, C_1,\dots C_\gamma$ of $\lie g$. Let $r \in \NN_0$ be the maximal number of factors $B_m$ with $m\in\{1,\dots, \beta\} \setminus K$
  that occur in any summand of this linear combination.
  As $\Pfaffian(K)$ is central, one can expand $\Pfaffian(K)^r a$ as a linear combination of products with factors
  $B_{k_i} \in \UnivStar(\lie g)_\subKnachKlammer^1$, $\Pfaffian(K) B_m \in \UnivStar(\lie g)_\subKnachKlammer^1$, and $C_j \in \UnivStar(\lie g)_\subKnachKlammer^\inv$
  with $i \in \{1,\dots,2d\}$, $m \in \{1,\dots,\beta\} \setminus K$, and $j\in \{1,\dots, \gamma\}$, and with possibly
  some remaining factors $\Pfaffian(K) \in \genSAlg{\lie z} \subseteq \UnivStar(\lie g)_\subKnachKlammer^\inv$.
  It follows that $\Pfaffian(K)^r a \in \UnivStar(\lie g)_\subKnachKlammer^s$ for sufficiently large $s\in \NN_0$.
\end{proof}

\begin{proposition} \label{proposition:commutatortrick}
  Assume $K = \{k_1,\dots,k_{2d}\} \in \FinSet$ with $d\in \NN_0$ and $k_1 < \dots < k_{2d}$ and let $I$ be an ideal of $\UnivStar(\lie g)$.
  Then for all $a\in I$ there are $r,s\in \NN_0$ and coefficients $p_\alpha \in \UnivStar(\lie g)_\subKnachKlammer^\inv \cap I$
  such that
  \begin{equation}
    \label{eq:commutatortrick}
    \Pfaffian(K)^r a = \sum_{\alpha \in \NN_0^{2d} , \abs{\alpha} \le s} p_\alpha B_\subKnachKlammer^\alpha
    .
  \end{equation}
\end{proposition}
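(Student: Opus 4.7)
The plan is to bootstrap the raw expansion provided by Lemma~\ref{lemma:univstarinfty}, using the iterated derivations $\ad_D^\alpha$ of Lemma~\ref{lemma:dual} as coefficient extractors. Concretely, given $a\in I$, Lemma~\ref{lemma:univstarinfty} supplies $r',s'\in \NN_0$ together with $q_\alpha \in \UnivStar(\lie g)_\subKnachKlammer^\inv$ satisfying $\Pfaffian(K)^{r'}a = \sum_{\abs{\alpha}\le s'} q_\alpha B_\subKnachKlammer^\alpha$; the task is to show that, after multiplying through by a further power of $\Pfaffian(K)$, the coefficients can additionally be arranged to lie in $I$. The plan is to prove this strengthened statement by induction on $s'$. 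The base case $s'=0$ is immediate since then $q_0 = \Pfaffian(K)^{r'}a \in I$ whenever $a\in I$.

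For the inductive step, the crucial observation is that $\ad_D^\alpha$ annihilates $\UnivStar(\lie g)_\subKnachKlammer^\inv$ by \eqref{eq:dual:inv}, so an iterated application of the Leibniz rule gives $\ad_D^\alpha(q_\beta B_\subKnachKlammer^\beta) = q_\beta\, \ad_D^\alpha(B_\subKnachKlammer^\beta)$, which by \eqref{eq:dual:B} vanishes unless $\alpha \le \beta$. For a multiindex $\alpha$ with $\abs{\alpha}=s'$, the condition $\alpha \le \beta$ combined with $\abs{\beta}\le s'$ forces $\beta=\alpha$, leaving only the single surviving contribution
\begin{equation*}
  \ad_D^\alpha\bigl(\Pfaffian(K)^{r'}a\bigr) = q_\alpha \bigl(\I\Pfaffian(K)\bigr)^{s'} \alpha!.
\end{equation*}
Because $\Pfaffian(K)^{r'}a \in I$ and $\ad_D^\alpha$ is a composition of inner derivations (each preserving the two-sided ideal $I$), the left-hand side lies in $I$; after dividing out the nonzero scalar $\I^{s'}\alpha!$ this gives $q_\alpha \Pfaffian(K)^{s'} \in I$ for every $\alpha$ with $\abs{\alpha}=s'$.

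Multiplying the original expansion by $\Pfaffian(K)^{s'}$ and peeling off these top-degree terms yields
\begin{equation*}
  b \coloneqq \Pfaffian(K)^{r'+s'}a - \sum_{\abs{\alpha}=s'}\bigl(q_\alpha \Pfaffian(K)^{s'}\bigr) B_\subKnachKlammer^\alpha = \sum_{\abs{\alpha}<s'}\bigl(q_\alpha \Pfaffian(K)^{s'}\bigr) B_\subKnachKlammer^\alpha,
\end{equation*}
which lies in $I$ and already comes equipped with an expansion of maximal degree at most $s'-1$ and coefficients in $\UnivStar(\lie g)_\subKnachKlammer^\inv$. Applying the induction hypothesis to $b$ and then recombining with the top-degree terms---absorbing a further power of $\Pfaffian(K)$ in the process---produces the desired expansion of $\Pfaffian(K)^R a$ with coefficients in $\UnivStar(\lie g)_\subKnachKlammer^\inv \cap I$. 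I expect the main obstacle to be not a conceptual one but merely the careful tracking of how many extra factors of $\Pfaffian(K)$ accumulate over successive rounds of induction; once the Leibniz reduction $\ad_D^\alpha(q_\beta B_\subKnachKlammer^\beta) = q_\beta \ad_D^\alpha(B_\subKnachKlammer^\beta)$ is in place, the whole scheme rests squarely on the dichotomy built into Lemma~\ref{lemma:dual}.
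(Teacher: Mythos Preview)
Your proposal is correct and follows essentially the same approach as the paper: both invoke Lemma~\ref{lemma:univstarinfty} for a raw expansion and then use the iterated derivations $\ad_D^{\alpha}$ of Lemma~\ref{lemma:dual} as coefficient extractors, exploiting that inner derivations preserve $I$. The only difference is organizational---the paper runs a single downward induction on $\abs{\alpha}$ showing directly that $\Pfaffian(K)^{s}p'_\alpha \in I$ for every $\alpha$ (so $r=r'+s$ suffices), whereas your recursive peeling of the top layer accumulates a larger power of $\Pfaffian(K)$, exactly the bookkeeping issue you anticipate.
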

\begin{proof}
  By the previous Lemma~\ref{lemma:univstarinfty} there exist $r', s \in \NN_0$ and coefficients
  $p'_\alpha \in \UnivStar(\lie g)_\subKnachKlammer^\inv$ for $\alpha \in \NN_0^{2d}$ with $\abs{\alpha} \le s$ 
  such that $\Pfaffian(K)^{r'} a = \sum_{\alpha \in \NN_0^{2d} , \abs{\alpha} \le s} p'_{\alpha} B_\subKnachKlammer^{\alpha}$.
  We will now show that $\Pfaffian(K)^s p'_\alpha \in I$ for all $\alpha \in \NN_0^{2d}$ with $\abs{\alpha} \le s$.
  This then completes the proof because it shows that \eqref{eq:commutatortrick} holds for $r \coloneqq r' + s$
  and $p_\alpha \coloneqq \Pfaffian(K)^s p'_\alpha$.
  
  So assume that for one $t \in \{0,\dots,s\}$ it has already been established that $\Pfaffian(K)^s p'_\alpha \in I$ for all 
  $\alpha \in \NN_0^{2d}$ with $s-t < \abs{\alpha} \le s$ (note that this is trivially true for $t=0$).
  Then for all $\alpha' \in \NN_0^{2d}$ with $\abs{\alpha'} = s-t$ Lemma~\ref{lemma:dual} shows that
  \begin{align*}
    \Pfaffian(K)^{s-\abs{\alpha'}} \ad_D^{\alpha'} \bigl(\Pfaffian(K)^{r'} a\bigr)
    &=
    \Pfaffian(K)^{s-\abs{\alpha'}} \sum_{\alpha \in \NN_0^{2d} , \abs{\alpha} \le s} p'_{\alpha} \ad_D^{\alpha'} \bigl(B_\subKnachKlammer^{\alpha}\bigr)
    \\
    &=
    \sum_{\substack{\alpha \in \NN_0^{2d} \\ \alpha' \le \alpha \textup{ and }\abs{\alpha}\le s}}
    \frac{\I^{\abs{\alpha'}} \alpha! }{(\alpha-\alpha')!}
    \Pfaffian(K)^s p'_\alpha B_\subKnachKlammer^{\alpha-\alpha'}
    .
  \end{align*}
  The left-hand side of this identity is an element of $I$ because $a\in I$. On the right-hand side, all summands except the one for $\alpha = \alpha'$
  are elements of $I$ because $\Pfaffian(K)^s p'_\alpha \in I$ for all $\alpha \in \NN_0^{2d}$ with $\abs{\alpha'} < \abs{\alpha} \le s$ by the induction assumption.
  It follows that the summand for $\alpha = \alpha'$ is an element of $I$, too, and after clearing the scalar prefactors this
  shows that $\Pfaffian(K)^s p'_{\alpha'} \in I$. By induction it follows that $\Pfaffian(K)^s p'_\alpha \in I$ for all $\alpha \in \NN_0^{2d}$ with $\abs{\alpha} \le s$.
\end{proof}

\begin{corollary} \label{corollary:commutatortrick}
	Let $K \in \FinSet$, let $I, J$ be ideals of $\UnivStar(\lie g)$ such that 
	$\Pfaffian(K) \bigl( \UnivStar(\lie g)_\subKnachKlammer^\inv \cap I \bigr) \subseteq J$,
	and assume that $J$ is real.
	Then $\Pfaffian(K) I \subseteq J$.
\end{corollary}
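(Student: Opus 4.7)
The plan is to combine the expansion from Proposition~\ref{proposition:commutatortrick} with a short power-reduction argument based on the realness of $J$. Given any $a \in I$, Proposition~\ref{proposition:commutatortrick} furnishes $r,s\in\NN_0$ and coefficients $p_\alpha\in\UnivStar(\lie g)_\subKnachKlammer^\inv\cap I$ with $\Pfaffian(K)^r a = \sum_\alpha p_\alpha B_\subKnachKlammer^\alpha$. Left-multiplying by $\Pfaffian(K)$ and regrouping yields $\Pfaffian(K)^{r+1}a = \sum_\alpha (\Pfaffian(K)\,p_\alpha)\,B_\subKnachKlammer^\alpha$. By hypothesis every $\Pfaffian(K)\,p_\alpha$ lies in $J$, so each summand is in $J$ (as $J$ is a right ideal), and hence $\Pfaffian(K)^{r+1}a\in J$.

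It then remains to descend from $\Pfaffian(K)^{r+1}a\in J$ down to $\Pfaffian(K)\,a\in J$. Writing $N\coloneqq\Pfaffian(K)$, which is hermitian, I would iterate the following single step: if $N^{k+1}a\in J$ for some $k\ge 1$, then $N^{2k}a = N^{k-1}\cdot(N^{k+1}a)\in J$ because $J$ is a left ideal, and hermiticity of $N$ gives
\[
(N^k a)^*(N^k a) = a^* N^{2k} a = a^*\cdot(N^{2k}a) \in J,
\]
whereupon realness of $J$ forces $N^k a\in J$. Applying this reduction $r$ times, starting from $N^{r+1}a\in J$, delivers $\Pfaffian(K)\,a\in J$; the boundary case $r=0$ requires no reduction. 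Since $a\in I$ was arbitrary, this gives $\Pfaffian(K)\,I\subseteq J$.

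The argument is essentially immediate once Proposition~\ref{proposition:commutatortrick} and the hermiticity of $\Pfaffian(K)$ are in hand; the real-ideal trick of absorbing powers of a hermitian element via the single-term identity $(N^k a)^*(N^k a) = a^* N^{2k} a$ is the only nontrivial ingredient, and it presents no real obstacle.
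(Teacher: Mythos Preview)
Your proof is correct and follows essentially the same approach as the paper: apply Proposition~\ref{proposition:commutatortrick} to get $\Pfaffian(K)^{r+1}a\in J$, then use realness of $J$ together with the identity $(N^k a)^*(N^k a)=a^*N^{2k}a$ for the hermitian element $N=\Pfaffian(K)$ to reduce the exponent down to $1$. The only cosmetic difference is in the reduction step: the paper first pads the exponent up to a power of $2$ and then halves it repeatedly, whereas you decrement the exponent by one at each step via $N^{2k}a=N^{k-1}(N^{k+1}a)\in J$; both variants work equally well.
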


\begin{proof}
	Consider $a \in I$ and write $2d\in \NN_0$ for the number of elements in $K$. By the previous 
	Proposition~\ref{proposition:commutatortrick} there exist $r,s \in \NN_0$ such that $\Pfaffian(K)^r a = \sum_{\alpha \in \NN_0^{2d}, \abs \alpha \leq s} p_\alpha B_K^\alpha$
	with suitable coefficients $p_\alpha \in \UnivStar(\lie g)_\subKnachKlammer^\inv \cap I$.
	As $\Pfaffian(K)p_\alpha \in J$ for all $\alpha \in \NN_0^{2d}$ with $\abs{\alpha} \le s$ by assumption,
	it follows that $\Pfaffian(K)^{r+1} a \in J$.
	Now let $t \in \NN_0$ be such that $2^t \geq r+1$, then $\Pfaffian(K)^{2^t} a \in J$.
	We show recursively that $\Pfaffian(K)^{2^j} a \in J$ for all $j\in \{0,\dots,t-1\}$:
	Indeed, if $\Pfaffian(K)^{2^{j+1}} a \in J$ for one $j\in \{0,\dots,t-1\}$, then
	$(\Pfaffian(K)^{2^j} a)^* (\Pfaffian(K)^{2^j} a)= a^* \Pfaffian(K)^{2^{j+1}} a \in J$, and therefore $\Pfaffian(K)^{2^j} a \in J$
	because $J$ is real.
\end{proof}

\subsection{\texorpdfstring{$^*$\=/Representations}{*-Representations} from \texorpdfstring{$^*$\=/characters}{*-characters}}

In this section we are going to construct filtered $^*$\=/algebra morphisms from $\UnivStar(\lie g)$ to a Weyl algebra 
out of $^*$\=/characters on the $^*$\=/subalgebras $\UnivStar(\lie g)_\subKnachKlammer^\inv$ of $\UnivStar(\lie g)$.

\begin{proposition} \label{proposition:constructionOfRepFromFunctional}
  Assume $K = \{k_1,\dots,k_{2d}\} \in \FinSet$ with $d\in \NN_0$ and $k_1 < \dots < k_{2d}$,
  and $\varphi \in \charStar\bigl(\UnivStar(\lie g)_\subKnachKlammer^\inv\bigr)$.
  If $\varphi\bigl(\Pfaffian(K) \bigr) \neq 0$ and $\varphi\bigl(\Pfaffian(L) \bigr) = 0$ for all $L \in \FinSet$ with $K \subsetneq L$,
  then there is a filtered $^*$\=/algebra morphism $\Phi \colon \UnivStar(\lie g) \to \Weyl{d}$
  that fulfils $\Phi(p) = \varphi(p) \Unit$ for all $p \in \UnivStar(\lie g)_\subKnachKlammer^\inv$.
\end{proposition}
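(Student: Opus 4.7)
The plan is to construct an $\RR$-linear map $\phi \colon \lie g \to \Weyl{d}^{(1)}$ such that $\phi(X)^* = -\phi(X)$ and $\phi(\kom{X}{Y}) = \kom{\phi(X)}{\phi(Y)}$ for all $X, Y \in \lie g$; Proposition~\ref{proposition:constructionOfRep} will then provide a $^*$\=/algebra morphism $\Phi \colon \UnivStar(\lie g) \to \Weyl{d}$, automatically filtered by construction. On the center I set $\phi(C_j) \coloneqq \varphi(C_j)\Unit$, which is antihermitian since $\varphi$ maps antihermitian elements of $\UnivStar(\lie g)_K^\inv$ to imaginary scalars. The real antisymmetric bilinear form $\omega(X,Y) \coloneqq -\I\varphi(\kom{X}{Y})$ on $\lie g$ restricts to a nondegenerate form on $V \coloneqq \mathrm{span}_\RR(B_{k_1}, \dots, B_{k_{2d}})$, because its matrix $\bigl(-\I\varphi(\kom{B_{k_i}}{B_{k_j}})\bigr)_{i,j}$ has Pfaffian $(-1)^d \varphi(\Pfaffian(K)) \neq 0$. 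By standard symplectic linear algebra I choose an $\RR$-basis $\tilde B_1, \dots, \tilde B_{2d}$ of $V$ for which $\omega$ is the standard symplectic form, set $\phi(\tilde B_a) \coloneqq \I P_a$ and $\phi(\tilde B_{d+a}) \coloneqq \I Q_a$ for $a \in \{1, \dots, d\}$, and extend $\RR$-linearly to recover $\phi(B_{k_j})$ for all $j$. Finally, for $m \in \{1,\dots,\beta\}\setminus K$ I rearrange the definition of $\Invariant_{K,m}$ into $\Pfaffian(K) B_m = \Invariant_{K,m} - \sign_K(m) \sum_{k\in K}\Pfaffian^k(K\cup\{m\})B_k$ and set
\[
  \phi(B_m) \coloneqq \frac{\varphi(\Invariant_{K,m})}{\varphi(\Pfaffian(K))}\Unit - \sign_K(m) \sum_{k\in K}\frac{\varphi(\Pfaffian^k(K\cup\{m\}))}{\varphi(\Pfaffian(K))}\phi(B_k) \in \Weyl{d}^{(1)}.
\]
This is engineered so that the induced $\Phi$ will automatically send $\Invariant_{K,m}$ to $\varphi(\Invariant_{K,m})\Unit$. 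Antihermiticity of $\phi(B_m)$ follows since all Pfaffians of even-sized subsets are hermitian (so their $\varphi$-values are real) while $\Invariant_{K,m}$ is antihermitian, and filteredness is immediate from the formulas.

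The core task is to verify $\phi(\kom{X}{Y}) = \kom{\phi(X)}{\phi(Y)}$ on basis pairs. Whenever one argument lies in the center both sides vanish. For $X, Y \in V$ the symplectic normalization together with $\kom{P_a}{Q_b} = -\I\Kronecker_{a,b}\Unit$ gives $\kom{\phi(X)}{\phi(Y)} = \I\omega(X,Y)\Unit = \varphi(\kom{X}{Y})\Unit = \phi(\kom{X}{Y})$, the last equality because $\kom{X}{Y} \in \lie z$. The main obstacle is the case $X = B_m$ with $m \notin K$ and $Y = B_h$ arbitrary. Applying $\varphi$ to the identity $\kom{\Invariant_{K,m}}{B_h} = -\I\sign_K(m)\Pfaffian_h(K\cup\{m\})$ from Proposition~\ref{proposition:commutators}, while using that $\Pfaffian_h(K\cup\{m\})$ is either zero (when $h \in K\cup\{m\}$) or $\pm\Pfaffian(K\cup\{m,h\})$ which is annihilated by $\varphi$ (since $K\cup\{m,h\}$ is an even-sized set strictly containing $K$), produces the key relation
\[
  \varphi(\Pfaffian(K))\,\varphi(\kom{B_m}{B_h}) = -\sign_K(m)\sum_{k\in K}\varphi(\Pfaffian^k(K\cup\{m\}))\,\varphi(\kom{B_k}{B_h})
\]
valid for every $h \in \{1,\dots,\beta\}$. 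For $h \in K$, this combined with the $V$-case already proved turns the defining expansion of $\kom{\phi(B_m)}{\phi(B_h)}$ into $\varphi(\kom{B_m}{B_h})\Unit = \phi(\kom{B_m}{B_h})$. For $h \notin K$ one applies the same relation twice (with $m$ and with $h$ in the role of the outside index) to rewrite $\varphi(\kom{B_m}{B_h})$ as a double sum over $(k,k') \in K\times K$ of terms $\varphi(\kom{B_k}{B_{k'}})$, which matches the double expansion of $\kom{\phi(B_m)}{\phi(B_h)}$ obtained by substituting the definitions of both $\phi(B_m)$ and $\phi(B_h)$.

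Once the bracket relations are established, Proposition~\ref{proposition:constructionOfRep} delivers the filtered $^*$\=/algebra morphism $\Phi$. For the final claim that $\Phi(p) = \varphi(p)\Unit$ for all $p \in \UnivStar(\lie g)_K^\inv$, it suffices to verify this on the $^*$\=/algebra generators $C_j$ and $\Invariant_{K,m}$: on $C_j$ by definition of $\phi$; on $\Invariant_{K,m}$ by expanding its defining formula, plugging in the definitions of $\phi(B_m)$ and $\phi(B_k)$, and using that $\Phi$ agrees with $\varphi(\argument)\Unit$ on $\genSAlg{\lie z}$. Since the images of the generators under $\Phi$ all lie in the commutative subalgebra $\CC\Unit$ of $\Weyl{d}$ and both $\Phi$ and $\varphi$ are unital algebra morphisms, the identity propagates from generators to arbitrary products, hence to all of $\UnivStar(\lie g)_K^\inv$.
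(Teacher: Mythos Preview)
Your argument is correct and follows essentially the same route as the paper: construct a symplectic basis of $\genVS{\set{B_k}{k\in K}}$ from the nondegenerate form $\varphi(\I\kom{\argument}{\argument})$, send the complementary directions to scalars determined by $\varphi(\Invariant_{K,m})$, and use the vanishing of $\varphi(\Pfaffian_h(K\cup\{m\}))$ for $h\notin K\cup\{m\}$ as the key input for the bracket compatibility. The one streamlining the paper adds is a change of basis: instead of keeping $B_m$ for $m\notin K$ and expanding $\phi(B_m)$ as you do, it introduces $F_m \coloneqq \sign_K(m)\sum_{k\in K\cup\{m\}}\varphi(\Pfaffian^k(K\cup\{m\}))B_k \in \lie g$ and defines $\tilde\varphi(F_m)\coloneqq\varphi(\Invariant_{K,m})\Unit$; since $\varphi(\kom{F_m}{B_h})=0$ for all $h$ and $\tilde\varphi(F_m)$ is a scalar, every bracket involving an $F_m$ is trivially $0=0$, which avoids your double-application step for the case $m,h\notin K$.
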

\begin{proof}
  Note that $\I \kom{X}{Y}$ for all $X,Y\in \lie g$ is a hermitian element of $\UnivStar(\lie g)$ so that $\varphi\bigl( \I \kom{X}{Y}\bigr) \in \RR$.
  Therefore the antisymmetric bilinear form $\omega \colon \genVS{\set{B_k}{k\in K}} \times \genVS{\set{B_k}{k\in K}} \to \RR$,
  \begin{equation*}
    (X,Y) \mapsto \varphi\bigl( \I \kom{X}{Y} \bigr)
  \end{equation*}
  is well-defined, where $\genVS{\argument}$ denotes the $\RR$-linear span in $\lie g$. In the basis $B_{k_1}, \dots, B_{k_{2d}}$ this bilinear
  form is described by the real matrix
  \begin{equation*}
    \Omega \coloneqq \bigl( \varphi(\I \kom{B_{k_j}}{B_{k_{j'}}})\bigr)_{j,j'=1}^{2d}
  \end{equation*}
  and $\det(\Omega) = \pfaffian(\Omega)^2 = \varphi\bigl(\Pfaffian(K)\bigr)^2 \neq 0$ because $\varphi$ is an algebra morphism.
  This shows that the antisymmetric form $\omega$ is non-degenerate and therefore we can construct a symplectic basis
  $X_1, \dots, X_d, Y_1 ,\dots, Y_d$ of $\genVS{\set{B_k}{k\in K}}$, i.e.
  \begin{equation*}
    \varphi\bigl( \I \kom{X_k}{X_\ell} \bigr) = \varphi\bigl( \I \kom{Y_k}{Y_\ell} \bigr) = 0
    \quad\quad\text{and}\quad\quad
    \varphi\bigl( \I \kom{X_k}{Y_\ell} \bigr) = -\Kronecker_{k,\ell}
  \end{equation*}
  for all $k,\ell \in \{1,\dots,d\}$ with $\Kronecker_{k,\ell}$ the Kronecker-$\delta$ (see e.g.~\cite[Sec.~1.3]{koszul.zou:IntroductionToSymplecticGeometry}).
  
  Recall that $\Pfaffian^k(K\cup \{m\})$ with $k\in K$ and $m\in \{1,\dots,\beta\} \setminus K$ is a hermitian element
  of $\UnivStar(\lie g)$ so that $\varphi\bigl(\Pfaffian^k(K\cup \{m\})\bigr) \in \RR$. We can therefore define
  \begin{equation*}
    F_m
    \coloneqq 
    \sign_K(m) \sum_{k\in K \cup \{m\}} \varphi\bigl(\Pfaffian^k(K\cup \{m\})\bigr)\, B_k \in \lie g
  \end{equation*}
  for all $m\in \{1,\dots,\beta\} \setminus K$. The coefficient in front of $B_m$ in this expansion is
  \begin{equation*}
    \sign_K(m) \varphi\bigl(\Pfaffian^m(K\cup \{m\})\bigr)
    =
    \sign_K(m) \sign_{K\cup \{m\}}(m) \varphi\bigl(\Pfaffian(K)\bigr)
    =
    \varphi\bigl(\Pfaffian(K)\bigr)
    \neq 0,
  \end{equation*}
  and therefore the basis vectors $X_1,\dots,X_d,Y_1,\dots,Y_d$ of $\genVS{\set{B_k}{k\in K}}$ together with
  these new vectors $F_m$ for $m\in \{1,\dots,\beta\} \setminus K$ form a basis of the linear subspace 
  $\genVS{\{B_1,\dots,B_\beta\}}$ of $\lie g$. Together with the basis $C_1,\dots,C_\gamma$ of the center $\lie z$ of $\lie g$
  we thus obtain a new basis of whole $\lie g$. Note the similarity between the definitions of $F_m \in \lie g$ (for fixed $K\in \FinSet$)
  and $\Invariant_{K,m} \in \UnivStar(\lie g)$.
  
  With the help of this new basis we define the $\RR$-linear map $\tilde \varphi \colon \lie g \to \Weyl{d}$,
  \begin{alignat*}{3}
    X_k     &\mapsto \tilde\varphi(X_k) &&\coloneqq \I P_k & &\text{for all $k\in \{1,\dots,d\}$},\\
    Y_\ell  &\mapsto \tilde\varphi(Y_\ell) &&\coloneqq \I Q_\ell & &\text{for all $\ell\in \{1,\dots,d\}$},\\
    F_m     &\mapsto \tilde\varphi(F_m) &&\coloneqq \varphi( \Invariant_{K,m})\Unit & &\text{for all $m\in \{1,\dots,\beta\} \setminus K$},\\
    C_j     &\mapsto \tilde\varphi(C_j) &&\coloneqq \varphi(C_j)\Unit&\quad\quad &\text{for all $j \in \{1,\dots,\gamma\}$}.
  \end{alignat*}
  We check that $\tilde\varphi$ fulfils the assumptions of Proposition~\ref{proposition:constructionOfRep}: First note that
  $\Invariant_{K,m}$ for $m\in \{1,\dots,\beta\} \setminus K$ and $C_j$ for $j \in \{1,\dots,\gamma\}$ are antihermitian,
  while all $P_k$, $Q_\ell$ with $k,\ell \in \{1,\dots,d\}$ and $\Unit$ are hermitian. Therefore 
  $\tilde\varphi$ maps all elements of $\lie g$ to antihermitian elements of $\Weyl{d}$. Moreover, 
  we have to check that $\tilde\varphi\bigl(\kom{V}{W}\bigr) = \kom[\big]{\tilde\varphi(V)}{\tilde\varphi(W)}$ holds for all $V,W \in \lie g$:
  First note that the Lie bracket of any two elements $V,W\in \lie g$ is central in $\lie g$ so that 
  $\tilde\varphi\bigl( \kom{V}{W}\bigr) = \varphi\bigl( \kom{V}{W}\bigr)\Unit$ by definition of $\tilde\varphi$.
  Clearly $\varphi\bigl(\kom{C_j}{W}\bigr)\Unit = 0 = \kom[\big]{\tilde\varphi(C_j)}{\tilde\varphi(W)}$ for $W\in \lie g$ and all $j\in \{1,\dots,\gamma\}$,
  and consequently also $\varphi\bigl(\kom{V}{C_j}\bigr)\Unit = \kom[\big]{\tilde\varphi(V)}{\tilde\varphi(C_j)}$ for $V\in \lie g$ and $j\in \{1,\dots,\gamma\}$.
  For the basis vectors $F_m$ with $m\in \{1,\dots,\beta\} \setminus K$ we find that
  \begin{align*}
    \varphi\bigl( \kom{F_m}{B_h} \bigr)
    &=
    \sign_K(m) \sum_{k\in K \cup \{m\}} \varphi\bigl(\Pfaffian^k(K\cup \{m\})\bigr)\, \varphi\bigl( \kom{B_k}{B_h} \bigr)
    \\
    &=
    -\I \sign_K(m) \,\varphi\bigl(\Pfaffian_h(K\cup \{m\})\bigr)
    \\
    &=
    0
  \end{align*}
  for all $h\in \{1,\dots,\beta\}$ by Proposition~\ref{proposition:PfaffianMatrixVectorOdd} and because $\varphi\bigl( \Pfaffian(L)\bigr) = 0$
  for all $L\in \FinSet$ with $K \subsetneq L$ by assumption.
  Therefore $\varphi\bigl(\kom{F_m}{W}\bigr)\Unit = 0 = \kom[\big]{\tilde\varphi(F_m)}{\tilde\varphi(W)}$ for $W\in \lie g$ and all $m\in \{1,\dots,\beta\} \setminus K$,
  and consequently also $\varphi\bigl(\kom{V}{F_m}\bigr)\Unit = \kom[\big]{\tilde\varphi(V)}{\tilde\varphi(F_m)}$ for $V\in \lie g$ and $m\in \{1,\dots,\beta\} \setminus K$.
  The remaining commutators of basis elements are easy to check:
  \begin{align*}
    \varphi\bigl(\kom{X_k}{X_\ell}\bigr)\Unit &= 0 = {\kom[\big]{\I P_k}{\I P_\ell}} = {\kom[\big]{\tilde\varphi(X_k)}{\tilde\varphi(X_\ell)}} \\
    \varphi\bigl(\kom{Y_k}{Y_\ell}\bigr)\Unit &= 0 = {\kom[\big]{\I Q_k}{\I Q_\ell}} = {\kom[\big]{\tilde\varphi(Y_k)}{\tilde\varphi(Y_\ell)}} \\
    \varphi\bigl(\kom{X_k}{Y_\ell}\bigr)\Unit &= \I \Kronecker_{k,\ell}\Unit =  {\kom[\big]{\I P_k}{\I Q_\ell}} = {\kom[\big]{\tilde\varphi(X_k)}{\tilde\varphi(Y_\ell)}}
  \end{align*}
  Therefore Proposition~\ref{proposition:constructionOfRep} applies and provides a $^*$\=/algebra morphism $\Phi \colon \UnivStar(\lie g) \to \Weyl{d}$
  that fulfils $\Phi \circ \iota = \tilde\varphi$ with $\iota \colon \lie g \to \UnivStar(\lie g)$ as in Definition~\ref{definition:iota}.  Clearly $\Phi$ is filtered.
  It is also clear that $\Phi(C_j) = \tilde\varphi(C_j) = \varphi(C_j) \Unit$ for all $j\in \{1,\dots,\gamma\}$.
  This already shows that $\Phi(p) = \varphi(p) \Unit$ for all $p\in \genSAlg{\lie z}$.
  Using this partial result we obtain for $m\in \{1,\dots,\beta\} \setminus K$:
  \begin{align*}
    \Phi(\Invariant_{K,m})
    &=
    \sign_K(m) \sum_{k\in K\cup\{m\}} \Phi \bigl( \Pfaffian^k(K\cup \{m\}) B_k\bigr)
    \\
    &=
    \sign_K(m) \sum_{k\in K\cup\{m\}} \varphi\bigl( \Pfaffian^k(K\cup \{m\}) \bigr) \Phi (B_k)
    \\
    &=
    \Phi(F_m)
    \\
    &=
    \tilde\varphi(F_m)
    \\
    &=
    \varphi(\Invariant_{K,m}) \Unit
    .
  \end{align*}
  Therefore $\Phi(p) = \varphi(p) \Unit$ holds for all $p\in \UnivStar(\lie g)_\subKnachKlammer^\inv$.
\end{proof}
This construction is illustrated in the example discussed in Section~\ref{sec:examples:nonparallel}.

\subsection{Proof of the main theorem} \label{sec:proofCompletion}

The following Lemma~\ref{lemma:theorem} shows that for the partially ordered set $(\FinSet, \subseteq)$ and
$\FinSetMapMap \coloneqq \Pfaffian \colon \FinSet \to \UnivStar(\lie g)$ the assumptions
of Proposition~\ref{proposition:stepwise} are fulfilled:

\begin{lemma} \label{lemma:theorem}
  Assume $K = \{k_1,\dots,k_{2d}\} \in \FinSet$ with $d\in \NN_0$ and $k_1 < \dots < k_{2d}$,
  and let $I$ be a real ideal of $\UnivStar(\lie g)$ such that $\Pfaffian(L) \in I$ for all $L\in \FinSet$ with $K \subsetneq L$.
  Write
  \begin{align}
  	Z_0 &\coloneqq \set[\big]{
  		\varphi \in \charStar\bigl( \UnivStar(\lie g)_\subKnachKlammer^\inv \bigr)
  	}{
  		\UnivStar(\lie g)_\subKnachKlammer^\inv \cap I \subseteq \ker \varphi 
  	},
  	\\
    S_0
    &\coloneqq
    \set[\big]{
      \varphi \in Z_0
    }{
      \varphi\bigl(\Pfaffian(K)\bigr) \neq 0    
    }
  \shortintertext{and}
    S
    &\coloneqq
    \set[\big]{
      \Phi \in \variety[d]
    }{
      \textup{there is $\varphi \in S_0$ such that $\Phi(p) = \varphi(p) \Unit$ for all $p \in \UnivStar(\lie g)_\subKnachKlammer^\inv$}
    }
    .
  \end{align}
  Then
  \begin{equation}
    \label{eq:lemma:main2}
    \Pfaffian(K) \,\Vanish{S} \subseteq I \subseteq \Vanish{S}
    .
  \end{equation}
\end{lemma}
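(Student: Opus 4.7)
The plan is to prove the two inclusions in \eqref{eq:lemma:main2} separately, using Proposition~\ref{proposition:commutatortrick} for $I \subseteq \Vanish{S}$, and the commutative real Nullstellensatz (Lemma~\ref{lemma:commutativeNullstellensatz}) together with Proposition~\ref{proposition:constructionOfRepFromFunctional} for $\Pfaffian(K) \Vanish{S} \subseteq I$.

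For the first inclusion, I would pick $a\in I$ and $\Phi \in S$ with associated character $\varphi \in S_0$, and apply Proposition~\ref{proposition:commutatortrick} to write $\Pfaffian(K)^r a = \sum_{\alpha} p_\alpha B_\subKnachKlammer^\alpha$ with coefficients $p_\alpha \in \UnivStar(\lie g)_\subKnachKlammer^\inv \cap I$. Applying $\Phi$, each $\Phi(p_\alpha) = \varphi(p_\alpha)\Unit$ vanishes because $\varphi \in Z_0$, whereas $\Phi(\Pfaffian(K))^r = \varphi(\Pfaffian(K))^r \Unit$ is a nonzero scalar multiple of the unit in $\Weyl{d}$ by the defining property of $S_0$, which forces $\Phi(a)=0$.

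For the second inclusion, I would invoke Corollary~\ref{corollary:commutatortrick} with $J \coloneqq I$, reducing the task to proving $\Pfaffian(K)(\UnivStar(\lie g)_\subKnachKlammer^\inv \cap \Vanish{S}) \subseteq I$. The strategy is to apply Lemma~\ref{lemma:commutativeNullstellensatz} to the finitely generated $^*$\=/subalgebra $\UnivStar(\lie g)_\subKnachKlammer^\inv$ and its ideal $\UnivStar(\lie g)_\subKnachKlammer^\inv \cap I$: realness of the latter is inherited from $I$, and one needs to verify that commutators of elements of $\UnivStar(\lie g)_\subKnachKlammer^\inv$ lie in $I$. On the generators this is immediate for the central $C_j$; for $\kom{\Invariant_{K,m}}{\Invariant_{K,m'}}$ one expands $\Invariant_{K,m'}$ as in Definition~\ref{definition:generator} and uses formula~\eqref{eq:commutators:invariant} to reduce to Pfaffians $\Pfaffian(L)$ with $\abs{L}>\abs{K}$, which belong to $I$ by hypothesis, and the Leibniz rule propagates this to arbitrary elements. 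Lemma~\ref{lemma:commutativeNullstellensatz} then yields $\UnivStar(\lie g)_\subKnachKlammer^\inv \cap I = \Vanish{Z_0}$.

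To finish, for $p \in \UnivStar(\lie g)_\subKnachKlammer^\inv \cap \Vanish{S}$ I would check that $\varphi(\Pfaffian(K) p) = 0$ for every $\varphi \in Z_0$: if $\varphi(\Pfaffian(K)) = 0$ this is automatic, and otherwise $\varphi \in S_0$. Since the hypothesis $\Pfaffian(L) \in \UnivStar(\lie g)_\subKnachKlammer^\inv \cap I$ for $L \supsetneq K$ forces $\varphi(\Pfaffian(L)) = 0$, Proposition~\ref{proposition:constructionOfRepFromFunctional} produces a filtered $^*$\=/algebra morphism $\Phi_\varphi \in \variety[d]$ with $\Phi_\varphi(q) = \varphi(q)\Unit$ for all $q \in \UnivStar(\lie g)_\subKnachKlammer^\inv$; by construction $\Phi_\varphi \in S$, so $p \in \Vanish{S}$ gives $\varphi(p)\Unit = \Phi_\varphi(p) = 0$. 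Therefore $\Pfaffian(K) p \in \Vanish{Z_0} = \UnivStar(\lie g)_\subKnachKlammer^\inv \cap I \subseteq I$, and Corollary~\ref{corollary:commutatortrick} closes the argument. The main obstacle is the commutativity check: showing that the generators of $\UnivStar(\lie g)_\subKnachKlammer^\inv$ commute modulo $I$ is what makes the classical real Nullstellensatz applicable to this noncommutative algebra, and it relies essentially on the assumption that $\Pfaffian(L) \in I$ for all $L \supsetneq K$.
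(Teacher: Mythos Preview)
Your proposal is correct and follows essentially the same approach as the paper: both arguments hinge on applying Lemma~\ref{lemma:commutativeNullstellensatz} to $\UnivStar(\lie g)_\subKnachKlammer^\inv \cap I$ after verifying the commutator condition via \eqref{eq:commutators:invariant}, extending characters in $S_0$ to elements of $S$ via Proposition~\ref{proposition:constructionOfRepFromFunctional}, and transferring the resulting inclusions from $\UnivStar(\lie g)_\subKnachKlammer^\inv$ to all of $\UnivStar(\lie g)$ via Proposition~\ref{proposition:commutatortrick} and Corollary~\ref{corollary:commutatortrick}. The only minor difference is organizational: for $I \subseteq \Vanish{S}$ you invoke Proposition~\ref{proposition:commutatortrick} directly and cancel $\varphi(\Pfaffian(K))^r$, whereas the paper first passes through Corollary~\ref{corollary:commutatortrick} to obtain $\Pfaffian(K) I \subseteq \Vanish{S}$ and then cancels a single factor of $\varphi(\Pfaffian(K))$.
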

\begin{proof}
  We begin with the following observation: The $^*$\=/algebra $\UnivStar(\lie g)_\subKnachKlammer^\inv$ is generated by
  the finite set $\set[\big]{\Invariant_{K,m}}{m\in \{1,\dots,\beta\} \setminus K} \cup \{C_1, \dots, C_\gamma\}$,
  and all commutators of these generators are elements of the real ideal $I$:
  this is immediately clear for commutators with the central elements $C_j$, $j\in \{1,\dots,\gamma\}$ (which are $0$),
  and identity \eqref{eq:commutators:invariant} of Proposition~\ref{proposition:commutators} shows that all commutators
  of $\Invariant_{K,m}$ with $m\in \{1,\dots,\beta\} \setminus K$ are elements of $I$ because
  $\Pfaffian\bigl( K \cup \{h,m\} \bigr) \in I$ for all $h \in \{1,\dots,\beta\}$ by assumption.
  Therefore the classical, commutative real Nullstellensatz in the form of Lemma~\ref{lemma:commutativeNullstellensatz} 
  applies to the real ideal $\UnivStar(\lie g)_\subKnachKlammer^\inv \cap I$ of $\UnivStar(\lie g)_\subKnachKlammer^\inv$
  and shows that
  \begin{equation*}
    \UnivStar(\lie g)_\subKnachKlammer^\inv \cap I
    =
    \Vanish[\big]{\ZerosNull{\UnivStar(\lie g)^\inv_\subKnachKlammer \cap I}}
   	=
    \Vanish{Z_0} .
  \end{equation*}
  Note that $\Vanish{X_0} \subseteq \UnivStar(\lie g)^\inv_\subKnachKlammer$ for $X_0 \subseteq \charStar(\UnivStar(\lie g)^\inv_\subKnachKlammer)$,
  and $\Vanish{X} \subseteq \UnivStar(\lie g)$ for $X \subseteq \variety[d]$.
  
  Next we show that $\Vanish{S} \cap \UnivStar(\lie g)^\inv_\subKnachKlammer = \Vanish{S_0}$:
  The inclusion $\supseteq$ follows immediately from the definition of $S$. Conversely,
  consider $a \in \Vanish{S} \cap \UnivStar(\lie g)^\inv_\subKnachKlammer$ and $\varphi \in S_0$.
  Then $\varphi(\Pfaffian(K)) \neq 0$ and $\UnivStar(\lie g)_\subKnachKlammer^\inv \cap I \subseteq \ker \varphi$
  so that $\varphi(\Pfaffian(L)) = 0$ for all $L \in \FinSet$ with $K \subsetneq L$.
  Therefore Proposition~\ref{proposition:constructionOfRepFromFunctional} applies and shows that there exists
  $\Phi \in \variety[d]$ such that $\Phi(p) = \varphi(p) \Unit$ for all $p\in \UnivStar(\lie g)_\subKnachKlammer^\inv$;
  so $\Phi \in S$ and then $\varphi(a) \Unit = \Phi(a) = 0$.

  Now note that $\varphi( \Pfaffian(K) a) = 0$ for all $a \in \Vanish{S_0}$ and $\varphi \in Z_0$, because by definition of $S_0$,
  either $\varphi(\Pfaffian(K)) = 0$ or $\varphi \in S_0$. From this and from the previous results it follows that
  \begin{gather*}
    \Pfaffian(K) \bigl(\Vanish{S} \cap \UnivStar(\lie g)^\inv_{\subKnachKlammer}\bigr)
    =
    \Pfaffian(K)\,\Vanish{S_0}
    \subseteq
    \Vanish{Z_0}
    =
    \UnivStar(\lie g)^\inv_{\subKnachKlammer} \cap I
    \subseteq
    I
  \shortintertext{and}
    \UnivStar(\lie g)_\subKnachKlammer^\inv \cap I
    =
    \Vanish{Z_0} \subseteq \Vanish{S_0}
    =
    \Vanish{S} \cap \UnivStar(\lie g)^\inv_\subKnachKlammer
    \subseteq
    \Vanish{S}
    .
  \end{gather*}
  By Corollary~\ref{corollary:commutatortrick} these two inclusions show that
  $\Pfaffian(K) \Vanish{S} \subseteq I$ and $\Pfaffian(K) I \subseteq \Vanish{S}$.
  
  Finally consider $a \in I$, $\Phi \in S$ and let $\varphi \in S_0$ be such that
  $\Phi(p) = \varphi(p) \Unit$ for all $p \in \UnivStar(\lie g)^\inv_\subKnachKlammer$.
  Then $\Phi(a) = \varphi(\Pfaffian(K))^{-1} \Phi( \Pfaffian(K)a) = 0$ because $\Pfaffian(K) I \subseteq \Vanish{S}$.
  This shows that $I \subseteq \Vanish{S}$.
\end{proof}

\begin{completionofproof}
  Consider the finite partially ordered set $(\FinSet, \subseteq)$ and the map
  $\FinSet \ni K \mapsto \FinSetMap{K} \coloneqq \Pfaffian(K) \in \UnivStar(\lie g)$.
  Then $\min \FinSet = \emptyset$ and $\FinSetMap{\min \FinSet} = \Pfaffian(\emptyset) = \Unit$. 
  An ideal $I$ of $\UnivStar(\lie g)$ is of type $K \in \FinSet$ in the sense of Proposition~\ref{proposition:stepwise}
  if and only if $\Pfaffian(L) \in I$ for all $L \in \FinSet$ with $K\subsetneq L$.
  For any tuple $(K,I)$ with $K \in \FinSet$ and $I$ a real ideal of $\UnivStar(\lie g)$ of type $K$, the previous 
  Lemma~\ref{lemma:theorem} provides a subset $S(K,I)$ of $\variety[\bullet]$ such that
  $\FinSetMap{K} \, \Vanish{S(K,I)} \subseteq I \subseteq \Vanish{S(K,I)}$ holds.
  Hence Proposition~\ref{proposition:stepwise} applies and shows that for every real ideal $I$ of $\UnivStar(\lie g)$
  there exists a subset $S$ of $\variety[\bullet]$ such that $\Vanish{S} = I$.
\end{completionofproof}

\section{Examples} \label{sec:examples}

\subsection{The Heisenberg Lie algebra} \label{sec:examples:heisenberg}

We continue the discussion of Example~\ref{example:heisenberg}, the Heisenberg Lie algebra $\lie h = \genVS{\{X,Y,Z\}}$.
Fix a real ideal $I \subseteq \UnivStar(\lie h)$.
In the proof of Theorem~\ref{theorem} we gain more and more information about $I$ in an iterative procedure,
which in the case of the Heisenberg Lie algebra consists of only two steps, corresponding to $K = \{1,2\}$ and $K = \emptyset$.
In each step we identify the correct filtered $^*$\=/algebra morphisms of the form $\Phi_\lambda \in \variety[1]$
and $\Psi_{\xi,\eta} \in \variety[0]$, respectively, that were discussed in Example~\ref{example:heisenberg}.
In the following we sketch the details of the general procedure in this specific case and in doing so derive a slightly stronger result.

In the first step, consider the commutative $^*$\=/subalgebra 
$\UnivStar(\lie h)^\inv_{\{1,2\}} = \genSAlg{\{Z\}} \subseteq \UnivStar(\lie h)$ and note that $\genSAlg{\{Z\}} \cong \CC[Z]$.
The intersection $I \cap \genSAlg{\{Z\}}$ clearly is a real ideal of $\genSAlg{\{Z\}}$.
In particular, it is generated by one minimal polynomial $\mu \in I \cap \genSAlg{\{Z\}}$.
By application of the fundamental theorem of algebra or the real Nullstellensatz, this polynomial
is of the form 
\begin{align}
  \label{eq:heisenberg:mu}
  \mu = {}&\prod\nolimits_{\lambda \in \Lambda} (\I Z - \lambda)
\intertext{with $\Lambda$ a finite (possibly empty) subset of $\RR$ (recall that $Z$ is antihermitian). Set $\Lambda^\times \coloneqq \Lambda \setminus \{0\}$ and}
  \label{eq:heisenberg:mured}
  \mu^\times \coloneqq {}&\prod\nolimits_{\lambda \in \Lambda^\times} (\I Z - \lambda)
  .
\end{align}
For all $\lambda \in \Lambda^\times$ we can construct the
filtered $^*$\=/algebra morphism $\Phi_\lambda \colon \UnivStar(\lie h) \to \Weyl{1}$ from Example~\ref{example:heisenberg}.
Its kernel is $\ker \Phi_\lambda = \genSId{\{\I Z - \lambda\}}$, the $^*$\=/ideal of $\UnivStar(\lie h)$ that is generated
by $\I Z - \lambda$. Clearly 
$I \cap \genSAlg{\{Z\}} \subseteq \bigcap\nolimits_{\lambda\in\Lambda^\times}\ker \Phi_\lambda$,
but it is not so obvious that $I \subseteq \bigcap\nolimits_{\lambda\in\Lambda^\times}\ker \Phi_\lambda$.
For this we need the following commutator trick:

\begin{lemma} \label{lemma:heisenbergCommutatorTrick}
  Every element $a \in I$ can be expanded as $a = \sum_{m,n=0}^\infty X^m Y^n q_{m,n} \mu^\times$
  with suitable coefficients $q_{m,n} \in \genSAlg{\{Z\}}$ (almost all of which are zero).
\end{lemma}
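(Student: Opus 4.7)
The plan is to specialize Proposition~\ref{proposition:commutatortrick} to the Heisenberg situation with $K = \{1,2\}$, where $B_1 = X$, $B_2 = Y$, $C_1 = Z$, and $\Pfaffian(K) = \I\kom{X}{Y} = \I Z$, and then to clear the resulting factor of $(\I Z)^r$ by a B\'ezout argument in the central subalgebra $\genSAlg{\{Z\}} \cong \CC[\I Z]$. Since $\{1,\dots,\beta\}\setminus K$ is empty, no invariant elements $\Invariant_{K,m}$ arise and $\UnivStar(\lie h)^\inv_K$ coincides with $\genSAlg{\{Z\}}$, so the ``commutator trick'' will express every $a \in I$ as a finite sum of monomials $X^m Y^n$ with central coefficients coming from $I \cap \genSAlg{\{Z\}}$, up to a prefactor of $(\I Z)^r$.

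The first step is to apply Proposition~\ref{proposition:commutatortrick}: it yields integers $r,s \in \NN_0$ and central coefficients $p_{m,n} \in \genSAlg{\{Z\}} \cap I$ (indexed by $m,n \geq 0$ with $m+n \leq s$) such that $(\I Z)^r a = \sum_{m,n} p_{m,n} X^m Y^n$. The second step is to observe that $\genSAlg{\{Z\}} \cap I$ is a real ideal of the principal ideal domain $\CC[\I Z]$, hence principal and generated by $\mu$ from \eqref{eq:heisenberg:mu}. This lets me write $p_{m,n} = r_{m,n}\mu$ with $r_{m,n} \in \genSAlg{\{Z\}}$, and using $\mu = (\I Z)^\varepsilon \mu^\times$ (with $\varepsilon = 1$ if $0 \in \Lambda$ and $\varepsilon = 0$ otherwise) together with centrality of all coefficients involved, I arrive at $(\I Z)^r a = (\I Z)^\varepsilon \mu^\times b$ for $b \coloneqq \sum_{m,n} r_{m,n} X^m Y^n \in \UnivStar(\lie h)$.

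The third step is to eliminate the common power of $\I Z$ from both sides using that $\UnivStar(\lie h)$ is a domain (by the Poincar\'e--Birkhoff--Witt theorem). If $r \leq \varepsilon$, cancellation directly yields $a = \mu^\times \cdot (\I Z)^{\varepsilon-r} b$; otherwise it leaves $(\I Z)^{r-\varepsilon} a = \mu^\times b$, in which $(\I Z)^{r-\varepsilon}$ and $\mu^\times$ are coprime in $\CC[\I Z]$ since $\mu^\times(0) \neq 0$ by definition of $\Lambda^\times$. B\'ezout then supplies $\alpha, \beta \in \CC[\I Z]$ with $\alpha (\I Z)^{r-\varepsilon} + \beta \mu^\times = 1$, whence $a = \alpha (\I Z)^{r-\varepsilon} a + \beta \mu^\times a = \mu^\times(\alpha b + \beta a)$. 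In either case, $a = \mu^\times c$ for some $c \in \UnivStar(\lie h)$, and expanding $c = \sum_{m,n} X^m Y^n q_{m,n}$ in the PBW basis with $q_{m,n} \in \genSAlg{\{Z\}}$, together with centrality of $\mu^\times$, yields the claimed expansion, with almost all $q_{m,n}$ vanishing by finiteness of the PBW expansion.

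The only real obstacle is the bookkeeping around the possible factor of $\I Z$ in $\mu$ when $0 \in \Lambda$: one cannot apply B\'ezout directly to $(\I Z)^r$ and $\mu$ because they may share the factor $\I Z$, so the domain-based cancellation of the shared power must come first, and B\'ezout is invoked only afterwards on the remaining coprime pair.
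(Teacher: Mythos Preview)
Your argument is correct. Both your proof and the paper's rest on the same two ingredients: the commutator trick of Proposition~\ref{proposition:commutatortrick} (here with $K=\{1,2\}$, so that $\UnivStar(\lie h)^\inv_K=\genSAlg{\{Z\}}$) and the coprimality of $Z$ and $\mu^\times$ in $\CC[Z]$. The execution differs, however. The paper does not apply Proposition~\ref{proposition:commutatortrick} as a black box; instead it starts from the PBW expansion $a=\sum_{m,n} X^m Y^n p_{m,n}$ of $a$ itself (no prefactor $(\I Z)^r$ is needed here because $\{1,\dots,\beta\}\setminus K=\emptyset$), applies the iterated commutators with the dual elements to deduce $Z^{M+N} p_{m,n}\in I\cap\genSAlg{\{Z\}}$, and then argues \emph{coefficient-wise in $\CC[Z]$} that $\mu^\times\mid p_{m,n}$. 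Your route instead keeps the global equation $(\I Z)^r a=(\I Z)^\varepsilon\mu^\times b$ in $\UnivStar(\lie h)$ and undoes the spurious $(\I Z)^r$ by invoking that $\UnivStar(\lie h)$ is a domain together with a B\'ezout identity. This is perfectly valid but slightly heavier: the paper never needs the domain property of $\UnivStar(\lie h)$ nor an explicit B\'ezout identity, only unique factorization in $\CC[Z]$. On the other hand, your approach has the virtue of treating Proposition~\ref{proposition:commutatortrick} as a reusable black box rather than reopening its proof.
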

\begin{proof}
  This is mostly an application of Proposition~\ref{proposition:commutatortrick}. Essentially, writing $B_1 \coloneqq X$ and $B_2 \coloneqq Y$,
  then the ``dual'' elements $\Dual_1 \coloneqq Y$ and $\Dual_2 \coloneqq -X$ fulfil $\kom{B_k}{\Dual_\ell} = \Kronecker_{k,\ell} Z$
  for $k,\ell \in \{1,2\}$. Any element $a\in \UnivStar(\lie h)$ can be expanded as
  $a = \sum_{m,n=0}^{M,N} X^m Y^n p_{m,n}$ with suitable $p_{m,n} \in \genSAlg{\{Z\}}$ and $M,N \in \NN_0$. If $a\in I$, then also all commutators
  with $a$ are in $I$ and by applying commutators with these ``dual'' elements one iteratively finds that $Z^{M+N} p_{m,n} \in I \cap \genSAlg{\{Z\}}$
  for all $m,n \in \NN_0$, so $Z^{M+N}p_{m,n}$ is a multiple of $\mu$. As $0$ is not a root of $\mu^\times$,
  it follows that $p_{m,n}$ is a multiple of $\mu^\times$,
  i.e.\ there are coefficients $q_{m,n} \in \genSAlg{\{Z\}}$ such that $p_{m,n} = q_{m,n} \mu^\times$.
\end{proof}
For every element $a \in I$ there thus exists a unique element $\frac{a}{\mu^\times} \in \UnivStar(\lie h)$ such that $\frac{a}{\mu^\times} \mu^\times = a$,
and especially $I\subseteq \genSId{\{\mu^\times\}} \subseteq \bigcap_{\lambda \in \Lambda^\times} \ker \Phi_\lambda$,
where $\genSId{\argument}$ denotes the generated $^*$\=/ideal in $\UnivStar(\lie h)$.
Conversely also $\genSId{\{\mu^\times\}} \supseteq \bigcap_{\lambda \in \Lambda^\times} \ker \Phi_\lambda$ holds,
in order to see this we repeat the above discussion with $J \coloneqq \bigcap_{\lambda \in \Lambda^\times} \ker \Phi_\lambda$ in place of $I$:
The real ideal $J \cap \genSAlg{\{Z\}}$ of $\genSAlg{\{Z\}}$ is generated by the minimal polynomial $\mu^\times$.
Application of Lemma~\ref{lemma:heisenbergCommutatorTrick} now shows that all elements of $J$ are multiples of $\mu^\times$,
so $J \subseteq \genSId{ \{\mu^\times\}}$.

From \eqref{eq:heisenberg:mu} we also have $Z \mu^\times \in \genSId{\{\mu\}} \subseteq I$, hence
\begin{equation}
  \label{eq:heisenberg:vorQuotient}
  Z \,\genSId[\big]{\{\mu^\times\}} \subseteq I \subseteq \genSId[\big]{\{\mu^\times\}}
\end{equation}
with $\genSId{\{\mu^\times\}} = \bigcap_{\lambda \in \Lambda^\times} \ker \Phi_\lambda$.
This gives a good first approximation of $I$.
Note that \eqref{eq:heisenberg:vorQuotient} is essentially \eqref{eq:stepwise} from Proposition~\ref{proposition:stepwise}.
In the second step, we wish to improve \eqref{eq:heisenberg:vorQuotient}. To this end, consider
\begin{equation}
  I' \coloneqq \set[\bigg]{\frac{a}{\mu^\times}}{a\in I} = \set[\big]{b \in \UnivStar(\lie h)}{b \mu^\times \in I}.
\end{equation}
In the language of Section~\ref{sec:realIdealQuotients}, $I'$ is the ideal quotient of $I$ over $\genSId{\{\mu^\times\}}= \bigcap_{\lambda \in \Lambda^\times} \ker \Phi_\lambda$.
It is not hard to check that $I'$ is again a real ideal of $\UnivStar(\lie h)$ (see also Lemma~\ref{lemma:realIdealQuotients}),
and $I' \mu^\times = I$ by construction.

Clearly $Z \in I'$ because $\mu \in I$, so $\genSId{\{Z\}} \subseteq I'$ and the quotient $^*$\=/algebra $\UnivStar(\lie h) / \genSId{\{Z\}}$
is commutative. In this quotient, $X$ and $Y$ are thus central, i.e.~they are ``invariant'' in the language of Section~\ref{sec:invariantAndDual}.
We could directly apply the real Nullstellensatz in the form of Lemma~\ref{lemma:commutativeNullstellensatz} to $I'$, but a more
explicit construction might provide more insight: We consider the canonical isomorphism $\UnivStar(\lie h) / \genSId{\{Z\}} \cong \CC[X,Y]$
and write $[\argument] \colon \UnivStar(\lie h)  \to \CC[X,Y]$
for the canonical projection onto this quotient. 
We construct a \emph{linear} split $s \colon \CC[X,Y] \to \UnivStar(\lie h)$,
\begin{equation}
  X^m Y^n \mapsto s(X^mY^n) \coloneqq X^m Y^n \quad\quad\text{for all $m,n\in \NN_0$,}
\end{equation}
i.e.\ $[s(p)] = p$ for all $p\in \CC[X,Y]$. Of course, $s$ is not a $^*$\=/algebra morphism.
Define the real ideal $I^\downarrow \coloneqq \set{[a]}{a\in I'} \cong I' / \genSId{\{Z\}}$ in $\CC[X,Y] \cong \UnivStar(\lie h) / \genSId{\{Z\}}$,
then the real Nullstellensatz shows that $I^\downarrow = \Vanish[\big]{\mathcal{Z}_0(I^\downarrow)}$
for a subset $\mathcal{Z}_0(I^\downarrow)$ of $\RR^2$.
It follows that $I' = s\bigl(\Vanish[\big]{\mathcal{Z}_0(I^\downarrow)}\bigr) + \genSId{\{Z\}}$.
For all $(\xi,\eta) \in \mathcal{Z}_0(I^\downarrow)$ we can construct the filtered $^*$\=/algebra morphisms $\Psi_{\xi,\eta} \colon \UnivStar(\lie h) \to \Weyl{0}$,
which map $Z$ to zero and then evaluate at $(\xi,\eta) \in \RR^2$, and so we get
\begin{equation}
  \label{eq:Iprime}
  I' = \bigcap\nolimits_{(\xi,\eta) \in \mathcal{Z}_0(I^\downarrow)} \ker \Psi_{\xi,\eta}
  .
\end{equation}
In total we thus obtain:
\begin{proposition}
  An ideal $I$ of $\UnivStar(\lie h)$ is real if and only if there exist a finite subset $\Lambda^\times$ of $\RR\setminus\{0\}$
  and a real algebraic subset $\mathcal{N}$ of $\RR^2$ (i.e.~$\mathcal{N}$ is the set of zeros of a polynomial) such that
  \begin{align}
    \label{eq:theorem:heisenberg2}
    I &= \Bigl(\bigcap\nolimits_{(\xi,\eta) \in \mathcal{N}} \ker \Psi_{\xi,\eta}\Bigr)\cap\Bigl(\bigcap\nolimits_{\lambda \in \Lambda^\times} \ker \Phi_\lambda\Bigr)
    .
  \shortintertext{In this case} 
    \label{eq:theorem:heisenberg1}
    I &= \bigl( s(\Vanish{\mathcal{N}}) + \genSId{\{Z\}} \bigr) \Bigl(\prod\nolimits_{\lambda \in \Lambda^\times} (\I Z - \lambda)\Bigr)
    .
  \end{align}
\end{proposition}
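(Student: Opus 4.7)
The plan is to package the iterative discussion above into a proof that covers both the equivalence and the explicit formula in one go. The backward direction of the equivalence is immediate from Proposition~\ref{proposition:vanishingIsReal}: if $I$ is given by \eqref{eq:theorem:heisenberg2}, then $I = \Vanish{S}$ for $S \coloneqq \set{\Phi_\lambda}{\lambda \in \Lambda^\times} \cup \set{\Psi_{\xi,\eta}}{(\xi,\eta) \in \mathcal{N}} \subseteq \variety[\bullet]$, and therefore $I$ is real.

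For the forward direction I would follow the two-step construction laid out above. First, the real ideal $I \cap \genSAlg{\{Z\}}$ of the commutative ring $\genSAlg{\{Z\}} \cong \CC[Z]$ has a minimal polynomial generator that must split as in \eqref{eq:heisenberg:mu}, fixing $\Lambda^\times$ and the reduced polynomial $\mu^\times$ of \eqref{eq:heisenberg:mured}. Lemma~\ref{lemma:heisenbergCommutatorTrick} then supplies the factorization $I = I' \mu^\times$ for the ideal quotient $I' \coloneqq \{b \in \UnivStar(\lie h) : b\mu^\times \in I\}$, which is itself a real ideal by Lemma~\ref{lemma:realIdealQuotients} and contains $Z$ because $Z\mu^\times$ equals $\mu$ or $-\I\mu$ depending on whether $0 \in \Lambda$. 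In the commutative quotient $\UnivStar(\lie h)/\genSId{\{Z\}} \cong \CC[X,Y]$ the image $I^\downarrow$ of $I'$ is a real ideal, so the classical real Nullstellensatz produces a real algebraic $\mathcal{N} \subseteq \RR^2$ with $I^\downarrow = \Vanish{\mathcal{N}}$; lifting via the linear split $s$ gives $I' = s(\Vanish{\mathcal{N}}) + \genSId{\{Z\}}$, and each $(\xi,\eta) \in \mathcal{N}$ yields the filtered morphism $\Psi_{\xi,\eta} \in \variety[0]$ of Example~\ref{example:heisenberg}, producing the intersection characterization $I' = \bigcap_{(\xi,\eta) \in \mathcal{N}} \ker \Psi_{\xi,\eta}$ already recorded in \eqref{eq:Iprime}.

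Combining the factorization $I = I' \mu^\times$ with the explicit descriptions of $I'$ and of $\mu^\times$ then yields \eqref{eq:theorem:heisenberg1} directly. For \eqref{eq:theorem:heisenberg2}, the identity $\bigcap_{\lambda \in \Lambda^\times} \ker \Phi_\lambda = \genSId{\{\mu^\times\}}$ (established by re-running Lemma~\ref{lemma:heisenbergCommutatorTrick} with this intersection in place of $I$) reduces the claim to the equality $I = I' \cap \genSId{\{\mu^\times\}}$, whose inclusion $\subseteq$ is clear from $I \subseteq I'$ and $I \subseteq \genSId{\{\mu^\times\}}$.

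The step I expect to be the main technical obstacle is the reverse inclusion $I' \cap \genSId{\{\mu^\times\}} \subseteq I$. Given $a = b\mu^\times$ in this intersection, the defining property of $I'$ yields $b(\mu^\times)^2 = a\mu^\times \in I$. Since $\mu^\times$ is hermitian and central, left-multiplying by $b^*$ keeps us in $I$ and produces $b^*b(\mu^\times)^2 = (b\mu^\times)^*(b\mu^\times) \in I$, so realness of $I$ forces $a = b\mu^\times \in I$. This short cancellation argument is what ultimately bridges the ideal-quotient description $I = I'\mu^\times$ with the intersection-of-kernels description in \eqref{eq:theorem:heisenberg2}, and I anticipate that spelling it out carefully is the only subtle point in compiling the preceding discussion into a formal proof.
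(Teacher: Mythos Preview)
Your proposal is correct and follows essentially the same route as the paper: the paper obtains the key identity $I = I' \cap \genSId{\{\mu^\times\}}$ by invoking the general identity $(I:J)\cap J = I\cap J$ from Lemma~\ref{lemma:realIdealQuotients}, whose proof is precisely the realness cancellation you spell out by hand in your final paragraph. One minor slip: when $0\notin\Lambda$ you have $\mu^\times=\mu$, so $Z\mu^\times = Z\mu$ rather than $\mu$, but since $\mu\in I$ this still gives $Z\mu^\times\in I$ and hence $Z\in I'$ as you need.
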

\begin{proof}
  For any real ideal $I$ of $\UnivStar(\lie h)$ the above discussion shows that
  $I = I' \mu^\times$ with $I'$ as in \eqref{eq:Iprime},
  so $I$ is of the form \eqref{eq:theorem:heisenberg1} with $\mathcal{N} \coloneqq \mathcal{Z}_0(I^\downarrow)$.
  If \eqref{eq:theorem:heisenberg2} holds, then $I$ is real by Proposition~\ref{proposition:vanishingIsReal}.
  It remains to show that \eqref{eq:theorem:heisenberg2} holds for every real ideal $I$, which follows from 
  Lemma~\ref{lemma:realIdealQuotients} (like in the proof of Proposition~\ref{proposition:stepwise}):
  \begin{equation*}
    I
    =
    I \cap \genSId[\big]{\{\mu^\times\}}
    =
    \bigl( I : \genSId[\big]{\{\mu^\times\}} \bigr) \cap \genSId[\big]{\{\mu^\times\}}
    =
    I' \cap \genSId[\big]{\{\mu^\times\}}
  \end{equation*}
  where the second identity is due to Lemma~\ref{lemma:realIdealQuotients};
  we have already seen that $I' = \bigcap\nolimits_{(\xi,\eta) \in \mathcal{Z}_0(I^\downarrow)} \ker \Psi_{\xi,\eta}$
  and $\genSId{\{\mu^\times\}} = \bigcap_{\lambda\in\Lambda^\times} \ker \Phi_\lambda$.
\end{proof}

\subsection[\texorpdfstring{Free $2$-step nilpotent Lie algebras with $3$ generators}{Free 2-step nilpotent Lie algebras with 3 generators}]{\boldmath Free $2$-step nilpotent Lie algebras with $3$ generators} \label{sec:examples:nonparallel}

As a last example we consider the $2$-step nilpotent Lie algebra $\fthree$ that is generated freely by three elements $B_1,B_2,B_3$,
i.e.\ $\fthree$ is the $6$-dimensional real Lie algebra with basis $B_1,B_2,B_3,C_1,C_2,C_3$, where $C_1,C_2,C_3$ are central and
\begin{equation}
  {\kom{B_1}{B_2}} \coloneqq C_3, \quad\quad {\kom{B_2}{B_3}} \coloneqq C_1, \quad\quad\text{and}\quad\quad {\kom{B_3}{B_1}} \coloneqq C_2.
\end{equation}
In contrast to the Heisenberg Lie algebra $\lie h$, the coadjoint orbits of $\fthree$ are not parallel, which means that
its representation space has a more complicated geometry. This example thus allows us to discuss some of the additional 
complications occurring in the general proof of Theorem~\ref{theorem}. Let $I$ be a real $^*$\=/ideal of $\UnivStar(\fthree)$.
Note that 
\begin{equation}
  \FinSet = \bigl\{ \{2,3\}, \{1,3\}, \{1,2\}, \emptyset \bigr\}
  .
\end{equation}
The iterative construction of Proposition~\ref{proposition:stepwise} divides the problem of finding
a set $S \subseteq \variety[\bullet]$ such that $I = \Vanish{S}$ into four smaller subproblems dealing with
real ideals of type $K \in \FinSet$. 

First consider $K \in \bigl\{ \{2,3\}, \{1,3\}, \{1,2\} \bigr\}$, then every ideal $I$ is of type $K$.
In these three steps we want to identify the (non-trivial) filtered $^*$\=/algebra morphisms
$\Phi \colon \UnivStar(\fthree) \to \Weyl{d}$ with $d=1$ that fulfil $I \subseteq \ker \Phi$.
Naively, one could try to obtain $\Phi$ as an extension of a $^*$\=/character $\varphi$
whose domain is the commutative $^*$\=/subalgebra $\Phi^{-1}\bigl( \genSAlg{\{\Unit\}}\bigr)$
of $\UnivStar(\fthree)$.
However, in contrast to the first example of the Heisenberg Lie algebra, this preimage
$\Phi^{-1}\bigl( \genSAlg{\{\Unit\}}\bigr)$ now depends on $\Phi$! We therefore
have to restrict to a sufficiently large common $^*$\=/subalgebra $\UnivStar(\fthree)^\inv$ of all these different
commutative $^*$\=/subalgebras, then we can identify the correct $^*$\=/characters by applying the commutative real
Nullstellensatz Lemma~\ref{lemma:commutativeNullstellensatz} to $I \cap \UnivStar(\fthree)^\inv$.

The naive choice $\UnivStar(\fthree)^\inv \!\coloneqq \genSAlg{ \{C_1,C_2,C_3\}}$
is too small, a $^*$\=/character on $\genSAlg{ \{C_1,C_2,C_3\}}$ does not carry enough information to reconstruct 
a filtered $^*$\=/algebra morphism $\Phi \colon \UnivStar(\fthree) \to \Weyl{1}$.
Loosely speaking, the maximal coadjoint orbits have codimension $4$, 
so that we need $4$ generators.
Luckily there exists another independent central element in $\UnivStar(\fthree)$, which however is not in $\fthree$, namely
\begin{equation}
  \Invariant \coloneqq B_1 C_1 + B_2 C_2 + B_3 C_3
  .
\end{equation}
It is easy to check that indeed $\kom{\Invariant}{B_j} = 0$ for all $j\in\{1,2,3\}$, and comparison with Definition~\ref{definition:generator}
shows that $\I \Invariant = \Invariant_{\{2,3\},1} = - \Invariant_{\{1,3\},2} = \Invariant_{\{1,2\},3}$. We therefore set
\begin{equation}
  \UnivStar(\fthree)^\inv \coloneqq \genSAlg[\big]{\{\Invariant, C_1,C_2,C_3\}}
\end{equation}
and can then apply the commutative real Nullstellensatz to the real $^*$\=/ideal $I \cap \UnivStar(\fthree)^\inv$ of $\UnivStar(\fthree)^\inv$.
This $^*$\=/subalgebra $\UnivStar(\fthree)^\inv$ coincides with each of the three $^*$\=/subalgebra $\UnivStar(\fthree)^\inv_K$
of $\UnivStar(\fthree)$ from Definition~\ref{definition:UnivInv} for $K \in \bigl\{ \{2,3\}, \{1,3\}, \{1,2\} \bigr\}$.

Any $^*$\=/character $\varphi \in \charStar\bigl( \UnivStar(\fthree)^\inv \bigr)$ that fulfils $\varphi(C_j) \neq 0$
for at least one $j\in \{1,2,3\}$ can be extended to a filtered $^*$\=/algebra morphism $\Phi \colon \UnivStar(\fthree) \to \Weyl{1}$
as in Proposition~\ref{proposition:constructionOfRepFromFunctional}:
E.g.~if $K = \{2,3\}$ and $\varphi\bigl( \Pfaffian(K) \bigr) = \varphi(\I C_1) \neq 0$, then we can choose a symplectic basis
of $\genVS{\{B_2,B_3\}}$ as $X \coloneqq B_2$ and $Y \coloneqq - \varphi(\I C_1) B_3$.
Note that $C_j$ is antihermitian so that $\varphi( \I C_j ) \in \RR$ for all $j\in \{1,2,3\}$.
Together with $F \coloneqq \varphi( \I C_1 ) B_1 + \varphi(\I C_2) B_2 + \varphi(\I C_3) B_3 \in \lie g$
we obtain a basis $X,Y,F,C_1,C_2,C_3$ of $\lie g$. Then
\begin{align}
  \Phi(X) &\coloneqq \I P & \Phi(Y) &\coloneqq \I Q & \Phi(F) &\coloneqq \varphi(E) \Unit \\
  \Phi(C_1) &\coloneqq \varphi(C_1) \Unit & \Phi(C_2) &\coloneqq \varphi(C_2) \Unit & \Phi(C_3) &\coloneqq \varphi(C_3) \Unit 
\end{align}
extends in a unique way to a filtered $^*$\=/algebra morphism $\Phi \colon \UnivStar(\fthree) \to \Weyl{1}$.
Note that this extension $\Phi$ is not uniquely determined by $\varphi$.
For $K \in \bigl\{\{1,3\},\{1,2\}\bigr\}$ the construction is analogous.

The last step for $K = \emptyset$ then deals only with real ideals $I$ of type $\emptyset$,
meaning that $\{C_1,C_2,C_3\} \subseteq I$ so that $\UnivStar(\lie f_3) / I$ is commutative.
In this case the commutative real Nullstellenatz Lemma~\ref{lemma:commutativeNullstellensatz}
applies directly and provides the required filtered $^*$\=/algebra morphisms $\Phi \colon \UnivStar(\lie f_3) \to \Weyl{0} \cong \CC$.

For larger $2$-step nilpotent real Lie algebras $\lie g$ it will also happen that there are additional steps in between the first ones where
$\UnivStar(\lie g)^\inv$ is generated by central elements of $\UnivStar(\lie g)$, and the last one where the ideal $I$ is of type $\emptyset$
and therefore contains the whole center of $\lie g$ so that $\UnivStar(\lie g) / I$ is commutative.

\section*{Acknowledgements}

Most progress on this article and its follow-up has been made during mutual visits for
various occasions in different places, and we would like to thank the respective organizers and hosts:
the 2022 workshop ``math in the mill'' in Sondheim vor der Rhön; IMPAN Warsaw; and Leibniz University Hannover. 
This research is also part of the EU Staff Exchange project 101086394 ``Operator Algebras That One Can See''.
It was partially supported by the University of Warsaw Thematic Research Programme ``Quantum Symmetries''.
The second author would like to thank Profs.~K.~Schmüdgen and J.~Cimpri\v{c}
for insightful discussions and their encouragement to work on this topic.

\end{onehalfspace}

{
	\normalfont\footnotesize

}

\end{document}